\title[Gravitating vortices, cosmic strings, K\"ahler--Yang--Mills]{Gravitating vortices, cosmic strings,\\ and the K\"ahler--Yang--Mills equations}
\author[L. \'Alvarez-C\'onsul]{Luis \'Alvarez-C\'onsul}
  \address{Instituto de Ciencias Matem\'aticas (CSIC-UAM-UC3M-UCM)\\ Nicol\'as Cabrera 13--15, Cantoblanco\\ 28049 Madrid, Spain}
\email{l.alvarez-consul@icmat.es}
\author[M. Garcia-Fernandez]{Mario Garcia-Fernandez}
\email{mario.garcia@icmat.es}
\author[O. Garc\'{\i}a-Prada]{Oscar Garc\'{\i}a-Prada}
\email{oscar.garcia-prada@icmat.es}
\thanks{Partially supported by the Spanish MINECO under the ICMAT Severo Ochoa grant No. SEV-2011-0087, and under grant No. MTM2013-43963-P. The work of the second author has been partially supported by the Nigel Hitchin Laboratory under the ICMAT Severo Ochoa grant. The research leading to these results has received funding from the European Union's Horizon 2020 Programme (H2020-MSCA-IF-2014) under grant agreement No. 655162, and by the European Commission Marie Curie IRSES MODULI Programme PIRSES-GA-2013-612534}
\theoremstyle{plain}
\newtheorem{theorem}{Theorem}[section]
\newtheorem{lemma}[theorem]{Lemma}
\newtheorem{proposition}[theorem]{Proposition}
\theoremstyle{definition}
\newtheorem{definition}[theorem]{Definition}
\newtheorem{definition-theorem}[theorem]{Definition-Theorem}
\newtheorem*{acknowledgements}{Acknowledgements}
\theoremstyle{remark}
\newtheorem{remark}[theorem]{Remark}
\numberwithin{equation}{section} \setcounter{tocdepth}{1}
\newcommand{\tr}{\operatorname{tr}}
\newcommand{\pr}{p}
\newcommand{\Id}{\operatorname{Id}}
\newcommand{\Hom}{\operatorname{Hom}}
\newcommand{\ad}{\operatorname{ad}}
\newcommand{\Aut}{\operatorname{Aut}}
\newcommand{\dbar}{\bar{\partial}}
\newcommand{\CC}{{\mathbb C}}
\newcommand{\PP}{{\mathbb P}}
\newcommand{\RR}{{\mathbb R}}
\renewcommand{\(}{\left(}
\renewcommand{\)}{\right)}
\newcommand{\vol}{\operatorname{vol}}
\newcommand{\Vol}{\operatorname{Vol}}
\newcommand{\defeq}{\mathrel{\mathop:}=} 
\newcommand{\surj}{\to\kern-1.8ex\to}
\newcommand{\lto}{\longrightarrow}
\newcommand{\lra}[1]{\stackrel{#1}{\longrightarrow}}
\newcommand{\cA}{\mathcal{A}}
\newcommand{\cJ}{\mathcal{J}}
\newcommand{\cJi}{\mathcal{J}^{i}}
\newcommand{\cP}{\mathcal{P}}
\newcommand{\cG}{\mathcal{G}}
\newcommand{\cO}{\mathcal{O}}
\newcommand{\cT}{{\mathcal{T}}}
\newcommand{\Lie}{\operatorname{Lie}}
\newcommand{\LieG}{\operatorname{Lie} \cG}
\newcommand{\cX}{{\widetilde{\mathcal{G}}}}
\newcommand{\LieX}{\operatorname{Lie} \cX}
\newcommand{\cH}{\mathcal{H}} 
\newcommand{\LieH}{\Lie\cH}
\newcommand{\SL}{\operatorname{SL}}
\newcommand{\U}{\operatorname{U}}
\newcommand{\SU}{\operatorname{SU}}
\newcommand{\PU}{\operatorname{PU}}
\begin{document}

\begin{abstract}
In this paper we construct new solutions of the K\"ahler--Yang--Mills  equations, by applying dimensional reduction methods to the product of the  complex projective line with a compact Riemann surface. The resulting  equations, that we call gravitating vortex equations, describe Abelian vortices on the Riemann surface with back 
reaction of the metric. As a particular case of these gravitating vortices on the Riemann sphere we find solutions of the Einstein--Bogomol'nyi equations, which physically correspond to Nielsen--Olesen cosmic strings in the Bogomol'nyi phase. We use this to provide a Geometric Invariant Theory interpretation of an existence result by Y. Yang for the Einstein--Bogomol'nyi equations, applying a criterion due to G. Sz\'ekelyhidi. 
\end{abstract}

\maketitle

\setlength{\parskip}{5pt}
\setlength{\parindent}{0pt}

\tableofcontents

\section{Introduction}\label{sec:intro}


The K\"ahler--Yang--Mills equations, introduced in  \cite{AGG,GF}, emerge from a natural extension of the theories for constant scalar curvature K\"ahler metrics and hermite--Yang--Mills connections. Fix a holomorphic vector bundle $E$ over a compact complex K\"ahlerian manifold $X$. The K\"ahler--Yang--Mills equations intertwine the scalar curvature $S$ of a K\"ahler metric $g_X$ on $X$ and the curvature $F$ of a hermitian metric $H$ on $E$:
\begin{equation}\label{eq:CKYM0}
\begin{split}
i\Lambda F & = \lambda \Id,\\
S - \alpha \Lambda^2 \tr F \wedge F & = c.
\end{split}
\end{equation}
Here, $\Lambda F$ is the contraction of the curvature $F$ with the K\"ahler form of $g_X$. The equations depend on a coupling constant $\alpha\in\RR$, and the constants $\lambda, c\in\RR$ are topological (see Section \ref{sec:background} for details). These equations can be defined more generally, with $E$ replaced by a holomorphic principal bundle, but this set-up will suffice for our purposes in this paper.

The initial motivation for the present work was to find the simplest non-trivial solutions of the K\"ahler--Yang--Mills equations. It turns out that these equations decouple on a compact Riemann surface, due to the vanishing of the first Pontryagin term $\tr F \wedge F$, and so in this case the solution to the problem reduces to  a combination of the uniformization theorem for Riemann surfaces and the theorem of Narasimhan and Seshadri~\cite{D2,NS} (see~\cite[Example~5.6]{AGG}). For an arbitrary higher-dimensional manifold $X$, determining whether~\eqref{eq:CKYM0} admits solutions is a difficult problem, because in this case these equations are a system of coupled fourth-order fully non-linear partial differential equations. 

Despite this, a large class of examples was found in~\cite{AGG,GF} for small $\alpha$, by perturbing constant scalar curvature K\"ahler metrics and hermite--Yang--Mills connections. More concrete and interesting solutions \textemdash{} over a polarised threefold that does not admit any constant scalar curvature K\"ahler metric \textemdash{} were obtained by Keller and T{\o}nnesen--Friedman \cite{KellerTonnesen}. The second author and Tipler~\cite{GFT} added new examples to this short list, by simultaneous deformation of the complex structures of $X$ and $E$. 

To provide a new class of interesting examples, inspired by~\cite{G1}, in this paper we consider the dimensional reduction of the K\"ahler--Yang--Mills equations from
\[
X = \Sigma \times \PP^1
\]
to a compact connected Riemann surface $\Sigma$ of arbitrary genus $g(\Sigma)$. Let $\SU(2)$ act on $X$, trivially on $\Sigma$, and in the standard way on $\PP^1 \cong \SU(2)/\U(1)$. Let $L$ be a holomorphic line bundle over $\Sigma$, and $\phi$ a holomorphic global section of $L$. Let $E$ be the $\SU(2)$-equivariant  holomorphic  rank 2 vector bundle over $X$, fitting in the holomorphic extension
\[
0\to L\lto E\lto\mathcal{O}_{\mathbb{P}^1}(2)\to 0
\]
determined by the section $\phi\in H^0(L)\cong H^1(L\otimes\mathcal{O}_{\mathbb{P}^1}(-2))$ (we omit the obvious pull-backs). For $\tau \in \RR_{>0}$, consider the $\SU(2)$-invariant K\"ahler metric on $X$ whose K\"ahler form is
\begin{equation}\label{eq:omegatau}
\omega_\tau = \omega_\Sigma + \frac{4}{\tau} \omega_{FS},
\end{equation}
where $\omega_\Sigma$ is a K\"ahler form on $\Sigma$ and $\omega_{FS}$ is the Fubini--Study metric on $\PP^1$. Our main existence result for the K\"ahler--Yang--Mills equations is the following.

\begin{theorem}\label{th:ExistenceKYM}
Suppose $\phi \neq 0$ and that
\begin{equation}\label{eq:ineqtau}
0 < c_1(L) < \frac{\tau \Vol(\Sigma)}{4\pi}.
\end{equation}
\begin{enumerate}
\item[\textup{(1)}] Assume $g(\Sigma)  \geqslant 1$. Then, there exists $\epsilon > 0$ such that for all $\alpha \in \RR$ with $0 < |\alpha| < \epsilon$ there exists a solution of the K\"ahler--Yang--Mills equations on $(X,E)$ with K\"ahler class $[\omega_\tau]$.

\item[\textup{(2)}] Assume $g(\Sigma) = 0$ and $\alpha \tau c_1(L)=1$. Let $D=\sum n_jp_j$ be the effective divisor on $\PP^1$ corresponding to the pair $(L,\phi)$. Then, the K\"ahler--Yang--Mills equations on $(X,E)$ have solutions with K\"ahler class $[\omega_\tau]$, provided that the divisor $D$ is GIT polystable for the canonical linearized $\textup{SL}(2,\CC)$-action on the space of effective divisors.
\end{enumerate}
\end{theorem}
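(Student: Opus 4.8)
The plan is to solve \eqref{eq:CKYM0} by dimensional reduction, using the $\SU(2)$-symmetry to trade the fourth-order system on $X=\Sigma\times\PP^1$ for a coupled system on $\Sigma$, which we call the gravitating vortex equations. First I would set up the dictionary, in the spirit of Garc\'{\i}a-Prada's dimensional reduction of the abelian vortex equations: since $\SU(2)$ acts transitively on $\PP^1$, every $\SU(2)$-invariant function on $X$ is pulled back from $\Sigma$, so every $\SU(2)$-invariant K\"ahler metric in the class $[\omega_\tau]$ has the product form $\omega+\tfrac{4}{\tau}\omega_{FS}$ with $[\omega]=[\omega_\Sigma]$, while every $\SU(2)$-invariant Hermitian metric on $E$ inducing the fixed metric on the quotient $\mathcal{O}_{\PP^1}(2)$ is determined by a Hermitian metric $h$ on $L$, the invariant part of the curvature $F$ of the corresponding Chern connection on $E$ being expressed through the curvature $F_h$ of $h$ and the section $\phi$. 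Under this dictionary the first equation $i\Lambda F=\lambda\Id$ becomes the abelian $\tau$-vortex equation $i\Lambda_{\omega}F_h+\tfrac12|\phi|^2_h=\tfrac{\tau}{2}$ on $(\Sigma,\omega)$, whose integral over $\Sigma$ forces $0<c_1(L)<\tau\Vol(\Sigma)/4\pi$, i.e.\ \eqref{eq:ineqtau}. For the second equation one computes $S_{\omega_\tau}=S_{\omega}+\tfrac{\tau}{2}$ (in a suitable normalization of $\omega_{FS}$) and decomposes $\tr F\wedge F$ into a term $\tr F_h\wedge F_h$ along $\Sigma$ and a cross term quadratic in $\partial\phi$ and proportional to $\omega_{FS}$; eliminating $F_h$ through the vortex equation, $\Lambda_{\omega_\tau}^2\tr F\wedge F$ reduces, up to an additive and a multiplicative constant, to $(\Delta_{\omega}+\tau)(|\phi|^2_h-\tau)$, so that \eqref{eq:CKYM0} becomes, after absorbing constants,
\[
i\Lambda_{\omega}F_h+\tfrac12(|\phi|^2_h-\tau)=0,\qquad S_{\omega}-\alpha'(\Delta_{\omega}+\tau)(|\phi|^2_h-\tau)=c',
\]
with $\alpha'$ a nonzero multiple of $\alpha$ and $c'$ topological.

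For part (1) I would argue by the implicit function theorem around $\alpha=0$, where the system decouples. At $\alpha=0$ the first equation has a solution $h_0$ by the existence theorem for abelian vortices on Riemann surfaces, precisely because \eqref{eq:ineqtau} holds, and $S_{\omega}=c'$ has a solution $\omega_0$ by the uniformization theorem. Linearizing the coupled operator at $(\omega_0,h_0)$ on suitable H\"older spaces, the scalar curvature equation is independent of $h$ at $\alpha=0$, so invertibility modulo the single topological constant $c'$ reduces to: (i) the linearization of the $\tau$-vortex equation in the $h$-direction is a negative-definite second-order elliptic operator, hence an isomorphism; and (ii) the Lichnerowicz-type operator $\varphi\mapsto\dot S_{\omega_0}$ is an isomorphism modulo constants, which holds because for $g(\Sigma)\geqslant 1$ the flat ($g=1$) or hyperbolic ($g\geqslant 2$) metric on $\Sigma$ admits no nonconstant holomorphy potentials — in the genus-one case because the nonzero holomorphic vector fields of an elliptic curve are parallel and hence not Hamiltonian. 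Granted this invertibility, the implicit function theorem produces, for $0<|\alpha|<\epsilon$, a solution of the gravitating vortex equations, and hence, via the dictionary, an $\SU(2)$-invariant solution of \eqref{eq:CKYM0} on $(X,E)$ with K\"ahler class $[\omega_\tau]$.

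For part (2) the perturbation argument fails, since $\PP^1$ has nonconstant holomorphy potentials coming from the $\SL(2,\CC)$-action; instead I would impose the distinguished relation $\alpha\tau c_1(L)=1$, under which the gravitating vortex equations on $\PP^1$ become exactly the Einstein--Bogomol'nyi equations for Nielsen--Olesen cosmic strings in the Bogomol'nyi phase. By Y. Yang's existence theorem these admit a solution under an explicit numerical condition on the multiplicities $n_j$ of $D=\sum n_jp_j$, which one recognises as the classical GIT (poly)stability of binary forms, i.e.\ of $D$ for the canonical linearized $\SL(2,\CC)$-action on effective divisors on $\PP^1$; this matching can be organised through Sz\'ekelyhidi's stability criterion. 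Under the polystability hypothesis, Yang's theorem then supplies a solution, which transports back through the dictionary. The steps I expect to be hardest are the reduction of the second equation (the computation of $\Lambda_{\omega_\tau}^2\tr F\wedge F$ that produces the gravitating vortex equations) and, in part (1), the invertibility of the linearized operator — in particular ruling out obstructing holomorphic vector fields, which is exactly what restricts part (1) to $g(\Sigma)\geqslant 1$ — whereas in part (2) the essential work is identifying Yang's analytic hypothesis with polystability of $D$.
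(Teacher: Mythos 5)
Your proposal is correct and follows essentially the same route as the paper: dimensional reduction to the gravitating vortex equations, an implicit function theorem at $\alpha=0$ for part (1) with the obstruction identified as Hamiltonian Killing vector fields (absent for $g(\Sigma)\geqslant 1$, the genus-one case being handled exactly by your observation that parallel fields are not Hamiltonian), and for part (2) the identification $c=0$ with the Einstein--Bogomol'nyi equations plus Yang's theorem and the Hilbert--Mumford criterion for binary forms. The only cosmetic difference is that the paper invokes Sz\'ekelyhidi's criterion not in the proof of this theorem but in a separate deformation result around the strictly polystable configuration; for part (2) itself the matching of Yang's numerical conditions with polystability is just the classical Hilbert--Mumford computation, as you note.
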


The proof of this theorem is carried out in several steps via the analysis of the equations obtained by dimensional reduction of the K\"ahler--Yang--Mills equations, that we call gravitating vortex equations, given by
\begin{equation}\label{eq:gravvortexeq}
\begin{split}
i\Lambda F + \frac{1}{2}(|\phi|^2-\tau) & = 0,\\
S + \alpha(\Delta + \tau) (|\phi|^2 -\tau) & = c.
\end{split}
\end{equation}
The unknowns of the gravitating vortex equations are a K\"ahler metric $g_\Sigma$ on $\Sigma$ and a hermitian metric $h$ on $L$. Here, $F$ is the curvature of the Chern connection of $h$, $\Lambda F$ is its contraction by the K\"ahler form of $g_\Sigma$, $|\phi|$ is the pointwise norm of $\phi$ with respect to $h$, $S$ is the scalar curvature of $g_\Sigma$, and $\Delta$ is the Laplacian of the metric on the surface acting on functions. The constant $c\in\RR$ is topological, as it can be obtained by integrating~\eqref{eq:gravvortexeq} over $\Sigma$. More explicitly, 
\begin{equation}\label{eq:constantcintro}
c = \frac{2\pi(\chi(\Sigma) - 2\alpha\tau c_1(L))}{\Vol_\omega(\Sigma)}.
\end{equation}
The first equation in \eqref{eq:gravvortexeq} is the vortex equation, also known as the Bogomol'nyi equation in the abelian-Higgs model. Solutions of \eqref{eq:gravvortexeq} are called vortices, and have been extensively studied in the literature after the seminal work of Jaffe and Taubes~\cite{Jaffe-Taubes,Taubes1} on the Euclidean plane. It is known~\cite{Brad,G1,G2,Noguchi} that the existence of solutions (with $\phi\neq 0$) is equivalent to the inequality \eqref{eq:ineqtau}  (Noguchi~\cite{Noguchi} only considers the case $\tau=1$, but the proof can be adapted to any $\tau\in\RR$.)

The situation $c_1(L) = 0$, which is not covered by Theorem \ref{th:ExistenceKYM}, provides a simple class of solutions of the K\"ahler--Yang--Mills equations that we consider in Proposition \ref{prop:simple}. We note that even though the extension class of $E$ is non-trivial when $\phi \neq 0$, the existence problem for equivariant solutions of the K\"ahler--Yang--Mills equations is straightforward after dimensional reduction.

The proof of (1) in Theorem \ref{th:ExistenceKYM} is provided by the following (Theorem \ref{th:existencemmap}).

\begin{theorem}\label{th:existencemmap-intro}
Assume that $g(\Sigma) \geqslant 1$  and that $\phi$ is not identically zero. Then, for any constant scalar curvature metric $\omega$ on $\Sigma$ such that
\begin{equation}\label{eq:ineq3-intro}
0 < c_1(L) < \frac{\tau \Vol(\Sigma)}{4\pi}
\end{equation}
holds, there exists $\epsilon > 0$ such that for all $\alpha \in \RR$ with $0 < |\alpha| < \epsilon$ there exists a solution $(\omega_\alpha,h_\alpha)$ of the gravitating vortex equations \eqref{eq:gravvortexeq1}.
\end{theorem}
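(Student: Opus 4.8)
The plan is to solve the gravitating vortex equations \eqref{eq:gravvortexeq} by a perturbation argument around $\alpha = 0$, treating the pair $(\omega, h)$ as the unknown and using the fact that at $\alpha = 0$ the two equations decouple into a classical vortex equation and the constant-scalar-curvature (i.e.\ uniformization) equation, both of which are solvable under hypothesis \eqref{eq:ineq3-intro} and the genus assumption. Concretely, I would fix the constant scalar curvature background metric $\omega$ on $\Sigma$ and, invoking the Bradlow/Noguchi existence theorem, pick the unique hermitian metric $h_0$ on $L$ solving $i\Lambda_\omega F_{h_0} + \tfrac12(|\phi|^2_{h_0} - \tau) = 0$; this $(\omega, h_0)$ is a solution at $\alpha = 0$. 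Then I would set up the gravitating vortex system as the vanishing of a smooth map between appropriate Banach (Hölder or Sobolev) spaces of the form $\mathcal{F}(\omega', h', \alpha) = 0$, where $\omega' = \omega + i\partial\bar\partial\varphi$ ranges over Kähler metrics in $[\omega]$ (parametrized by the potential $\varphi$) and $h' = h_0 e^{-2f}$ over metrics on $L$. The strategy is the implicit function theorem in the parameter $\alpha$: one must show the linearization of $\mathcal{F}$ at $(\varphi,f,\alpha)=(0,0,0)$, in the directions $(\varphi, f)$, is an isomorphism.

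The heart of the argument is therefore the invertibility of this linearized operator. At $\alpha = 0$ the system is lower triangular: the second equation is just $S(\omega') = c$, whose linearization in $\varphi$ is (a constant times) the Lichnerowicz operator of the cscK metric $\omega$, and when $g(\Sigma)\geq 1$ this operator has trivial kernel (there are no nonzero holomorphic vector fields, equivalently no nontrivial one-parameter families of cscK metrics through $\omega$ other than reparametrizations, and on a curve the relevant operator is elliptic self-adjoint with kernel the Killing fields) — this is exactly where the genus hypothesis enters and why $g(\Sigma)=0$ must be handled separately in part (2) of Theorem \ref{th:ExistenceKYM} via GIT. The linearization of the first equation in $f$ is an operator of Laplace type, $\Delta_\omega + |\phi|^2_{h_0}$, which is invertible since $|\phi|^2_{h_0} > 0$ somewhere (as $\phi\not\equiv 0$) makes it a positive operator. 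Since the cross-term (the dependence of the vortex equation on $\varphi$ through $\Lambda_\omega$ and the dependence of the scalar-curvature equation on $f$) is multiplied by $\alpha$ and hence vanishes at $\alpha=0$, the full linearization at $\alpha=0$ is block-triangular with invertible diagonal blocks, hence an isomorphism. Applying the implicit function theorem yields, for $|\alpha|$ small, a smooth family of solutions $(\varphi_\alpha, f_\alpha)$ with $(\varphi_0,f_0)=(0,0)$; elliptic regularity upgrades these to smooth solutions $(\omega_\alpha, h_\alpha)$, giving the theorem.

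The main obstacle I anticipate is not the abstract structure but the bookkeeping needed to make the implicit function theorem bite: one must choose function spaces on which $\mathcal{F}$ is genuinely smooth (the nonlinearities $e^{-2f}$, $|\phi|^2_{h_0}e^{-2f}$, and the dependence of $S$ and $\Delta$ on $\varphi$ are all smooth in suitable Hölder spaces $C^{k,\beta}$), and one must quotient out or fix gauge for the obvious invariances — for instance the normalization of $\varphi$ by requiring $\int_\Sigma \varphi\, \omega^n = 0$, since adding a constant to $\varphi$ does not change $\omega'$, and similarly a normalization ensuring the constant $c$ and $\tau$ are the prescribed topological ones (the value of $c$ in \eqref{eq:constantcintro} being forced by integrating the second equation). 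A secondary technical point is confirming that the linearized scalar-curvature operator on the curve, once the correct constant factors and the coupling through $\Delta$ acting on $(|\phi|^2-\tau)$ are accounted for, is indeed the self-adjoint elliptic operator whose cokernel vanishes for $g(\Sigma)\geq 1$; but this is standard Kähler geometry on Riemann surfaces. Once the functional-analytic setup is fixed, the perturbation goes through essentially formally.
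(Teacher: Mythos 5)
Your proposal follows essentially the same route as the paper's (second, complete) proof of this theorem: parametrize the K\"ahler metric by a normalized K\"ahler potential $\varphi$ and the hermitian metric by a conformal factor $f$, start from the cscK metric plus the Bradlow/Noguchi vortex solution at $\alpha=0$, observe that the linearization there is block-triangular with diagonal blocks the Lichnerowicz operator and $\Delta_\omega+|\phi|^2_{h_0}$, and apply the implicit function theorem. Two points in your write-up deserve care. First, your justification for invertibility of the Lichnerowicz block when $g(\Sigma)\geq 1$ is stated loosely in a way that, taken literally, fails at $g(\Sigma)=1$: the torus \emph{does} carry nonzero holomorphic (indeed Killing) vector fields, namely the translations, and the kernel of the relevant operator on normalized potentials is not ``the Killing fields'' but the \emph{Hamiltonian} Killing fields, i.e.\ those $\eta_\varphi$ with $\eta_\varphi\lrcorner\omega$ exact. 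On the torus the translations are Killing but not Hamiltonian, which is exactly why the K\"ahler-potential parametrization works for all $g(\Sigma)\geq 1$, whereas the conformal-change (Kazdan--Warner) parametrization --- which the paper also presents, and which only covers $g(\Sigma)>1$ --- breaks down at $g(\Sigma)=1$ because the constants lie in the kernel of $\Delta-\frac{4\pi\chi(\Sigma)}{\Vol_\omega(\Sigma)}=\Delta$. Your argument is correct once this distinction is made explicit, but as written the case $g(\Sigma)=1$ is not actually justified. Second, a harmless slip: the dependence of the vortex equation on $\varphi$ (through $\Lambda_{\omega'}$) is \emph{not} multiplied by $\alpha$ and does not vanish at $\alpha=0$; only the dependence of the scalar-curvature equation on $f$ does. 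The linearization is therefore genuinely triangular rather than diagonal, which is still sufficient for your conclusion.
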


A proof of Theorem \ref{th:existencemmap-intro} for the case $g(\Sigma) > 1$, is given by using methods similar to those  used by Bradlow~\cite{Brad}. He applied a conformal change $h'=e^{2u}h$ to a suitable hermitian metric $h$ on $L$, observed that the vortex equation for $h'$ is a non-linear PDE for $u\in C^\infty(\Sigma)$ that had already been solved by Kazdan and Warner~\cite{KaWa}, and used their results to obtain an equivalence between the existence of vortices and~\eqref{eq:ineq3-intro}. In the case of the gravitating vortex equations, we can fix a constant scalar curvature metric $g$ on $\Sigma$ and the unique hermitian metric $h$ on $L$ with constant $\Lambda F$, and apply conformal changes to these metrics. The equations for $g'=e^{2u}g, h'=e^{2f}h$, with $u,f\in C^\infty(\Sigma)$, are
\begin{equation}\label{eq:KWtype0}
\begin{split}
\Delta f + \frac{1}{2}e^{2u}(e^{2f}|\phi|^2-\tau) & = - 2\pi c_1(L)/\Vol(\Sigma),\\
\Delta \(u + \alpha e^{2f}|\phi|^2 - 2 \alpha \tau f\) + c(1-e^{2u}) & = 0
\end{split}
\end{equation}
(see Lemma~\ref{lem:KW-equation}). To the knowledge of the authors, these equations have not appeared before in the geometric analysis literature, and provide a promising approach to the gravitating vortex equations.  We prove existence of solutions of \eqref{eq:KWtype0} in the weak coupling limit $0<\lvert\alpha\rvert\ll 1$ by a direct application of the Implicit Function Theorem in Banach spaces. 

Of course the equivalence of equations  \eqref{eq:gravvortexeq} and \eqref{eq:KWtype0} is also valid when $g(\Sigma)=1$, but the  proof of the result in  Theorem \ref{th:existencemmap-intro}  for genus $g(\Sigma)=1$ requires symplectic techniques that we introduce below, extending those used in  \cite{G2} for the usual vortex equations. The K\"ahler--Yang--Mills equations~\eqref{eq:CKYM0} describe the zeros of a moment map for the hamiltonian action of an appropriately extended gauge group on 
the product of the space of connections on the bundle and the space of complex structures on the base manifold (see Section~\ref{subsec:CKYM}). This is very natural considering that these equations are a generalization 
of the conditions of constant scalar curvature for a K\"ahler metric and hermite--Yang--Mills for a connection. Relying on this moment map interpretation, we developed in \cite{AGG} a general theory for the study of coupled equations on K\"ahlerian manifolds which applies to the K\"ahler--Yang--Mills equations. The key consequence of the dimensional reduction mechanism is that the gravitating vortex equations inherit a symplectic interpretation. Using this, we give a proof now valid for $g(\Sigma) \geq 1$ of Theorem \ref{th:existencemmap-intro} showing that the obstruction to deform a vortex over a constant scalar curvature compact Riemann surface corresponds to Hamiltonian Killing vector fields.

The proof of part (2) of Theorem \ref{th:ExistenceKYM} follows from the analysis of the gravitating vortex equations in the case $c = 0$ (note that $\alpha \tau c_1(L) =1$ 
implies this condition, by \eqref{eq:constantcintro}). It follows from~\eqref{eq:KWtype0} that the gravitating vortex equations are very special when $c = 0$, as 
the full system reduces to a single elliptic equation for a function on $\Sigma$ (see~\eqref{eq:single}). 
Furthermore, since $c_1(L) \neq 0$ and $\phi \neq 0$ in the hypothesis of Theorem \ref{th:ExistenceKYM},
\eqref{eq:constantcintro} implies that $\chi(\Sigma)>0$, 
so the only possible topology of $\Sigma$ is the Riemann sphere.  It turns out that, when $c = 0$ and $c_1(L)>0$, the gravitating vortex equations have a physical interpretation, as they are equivalent to the Einstein--Bogomol'nyi equations on a Riemann surface \cite{Yang1992,Yang1994CMP}. Solutions of the Einstein--Bogomol'nyi equations are known in the physics literature as Nielsen--Olesen cosmic strings~\cite{NielsenOlesen}, and describe a special class of solutions of the Abelian Higgs model coupled with gravity in four dimensions \cite{ComtetGibbons,Linet,Linet2}. 
For compact $\Sigma$, the Einstein--Bogomol'nyi equations were studied by Yang~\cite{Yang,Yang3}, who proved a theorem relating the existence of solutions and the relative position of the zeros of the Higgs field. The proof of part (2) in Theorem \ref{th:ExistenceKYM} follows from the equivalence between the Einstein--Bogomol'nyi equations and the gravitating vortex equations for $c = 0$, by a direct application of this
result of Yang. It turns out that Yang's existence theorem (Theorem \ref{th:Yang}) 
can be reformulated in the language of Geometric Invariant Theory (GIT) as developed by Mumford~\cite{MFK}, leading to the statement of (2) in Theorem \ref{th:ExistenceKYM}.


The reformulation of Yang's theorem in GIT terms makes  more transparent the link with algebraic geometry. This relation is illustrated by Theorem~\ref{th:triumph}, where we prove existence result for gravitating vortices on $\PP^1$ --- and hence for the K\"ahler--Yang--Mills equations on $\PP^1 \times \PP^1$ --- for polystable sections nearby a given strictly polystable $\phi$ which admits a solution. For this, we use the moment map interpretation of the K\"ahler--Yang--Mills equations and apply the main result in \cite{GFT}, generalizing a theorem of Sz\'ekelyhidi \cite{Szekelyhidi}. As a particular case of Theorem~\ref{th:triumph}, we recover with different methods a weak version of Yang's existence theorem (Theorem \ref{th:Yang}), around the (symmetric) solution. 
The symplectic techniques used in this paper 
provide also a route to construct obstructions to the existence
of solutions for $g(\Sigma) = 0$, that we consider in a separate paper 
\cite{AGG2}.

A key idea we would like to highlight 
in this paper is 
that the powerful methods of the 
theory of symplectic and GIT quotients are ideally suited to analyze the 
Einstein--Bogomol'nyi equations and, more generally, 
possibly 
other cosmic string solutions in field theory. Cosmic strings are special solutions of physical  field theories describing vortices coupled with gravity, whose energy is concentrated along an infinite line. The basic structure of a string is a complex scalar field --- the Higgs field $\phi$ --- that winds around the location of
the string, where there is a concentration of energy density. In the presence of gauge fields
that interact with the Higgs field and the gravitational field, the string is provided with
a quantized magnetic flux.
The existence of cosmic strings was postulated by Kibble \cite{Kibble} and it is intimately related with the choice of potential for the Higgs field 
$$
V(\phi) = \frac{\lambda}{4}(|\phi|^2 - \tau)^2.
$$ 
The minimum energy configuration has $|\phi|^2 = \tau$ but the phase of $\phi$ is undetermined and labels the points on the manifold of vacua which is a circle. 
The choice of ground state in the circle 
may vary in space-time and, by continuity, $\phi$ can be forced to leave the manifold of vacua 
producing distinguised submanifolds where $\phi = 0$. These submanifolds are known in field theory as topological defects and, among them, cosmic strings are given by a two-dimensional worldsheet in space-time. 

For the Abelian Higgs model coupled with gravity in four dimensions, Linet~\cite{Linet,Linet2} and Comtet and 
Gibbons~\cite{ComtetGibbons}
showed that there exists a critical phase such that when the
cosmological constant vanishes, the existence problem for classical solutions of the 
theory reduces to the
Einstein--Bogomol'nyi equations 
\cite{Yang1992,Yang1994CMP}. 
For non-compact $\Sigma$, the analysis of these equations carried out
during the early 1990s
gave rise to the construction of continuous families of finite-energy 
cosmic string 
solutions~\cite{CHMY,YangSpruck,YangSpruck2,Yang1994,Yang}.
For compact $\Sigma$, the Einstein--Bogomol'nyi equations correspond 
to our gravitating vortex equations in the case $c=0$,
symmetry breaking parameter $\tau>0$ and $\alpha/2\pi$ equal to the
gravitational constant \textemdash{} then the holomorphic global
section $\phi$, the K\"ahler metric $g_\Sigma$, and the Chern
connection of $h$ represent physically the Higgs field, the
gravitational field and the gauge fields, respectively. 



\begin{acknowledgements}
The authors wish to thank O. Biquard, D. J. F. Fox, J. Keller,
N. S. Manton, N. M. Rom\~ao, J. Ross and C. Tipler for useful
discussions. The authors would like to thank also the Isaac Newton
Institute for Mathematical Sciences, Cambridge, for support and
hospitality during the Programme on Moduli Spaces, 2011, when this
work started. Partial support of MGF was provided by QGM (Centre for
Quantum Geometry of Moduli Spaces), Aarhus, funded by the Danish
National Research Foundation, and also by the \'Ecole Polytechnique
F\'ed\'eral de Lausanne. Part of this work was undertaken during a
visit of the second and third authors to the Hausdorff Research
Institute for Mathematics, Bonn, in 2012, and they wish to thank the
hospitality.
\end{acknowledgements}


\section{The K\"ahler--Yang--Mills equations}\label{sec:background}

In this section, we explain briefly some basic facts
from~\cite{AGG,GF} about the K\"ahler--Yang--Mills equations, with emphasis on their symplectic interpretation. 

\subsection{Symplectic origin}\label{subsec:CKYM}

Throughout Section~\ref{subsec:CKYM}, manifolds, bundles, metrics, and
similar objects are of class $C^\infty$. Let $M$ be a compact
symplectic manifold of dimension $2n$, with symplectic form $\omega$
and volume form $\vol_\omega =\frac{\omega^n}{n!}$. Fix a complex
vector bundle $\pi\colon E\to M$ of rank $r$, and a hermitian metric
$H$ on $E$. Consider the positive definite inner product
\[
-\tr\colon\mathfrak{u}(r)\times\mathfrak{u}(r)\lto\RR
\]
on $\mathfrak{u}(r)$. Being invariant under the adjoint U$(r)$-action,
it induces a metric on the (adjoint) bundle $\ad E_H$ of
skew-hermitian endomorphisms of $(E,H)$. Let $\Omega^k$ and
$\Omega^{k}(V)$ denote the spaces of (smooth) $k$-forms and $V$-valued
$k$-forms on $M$, respectively, for any vector bundle $V$ over
$M$. Then the metric on $\ad E_H$ extends to a pairing on the space
$\Omega^\bullet(\ad E_H)$,
\begin{equation}\label{eq:Pairing}
\Omega^p(\ad E_H)\times\Omega^q(\ad E_H)\lto\Omega^{p+q},
\end{equation}
that will be denoted simply $-\tr a_p\wedge a_q$, for
$a_j\in\Omega^j(\ad E_H)$, $j=p,q$. An almost complex structure on $M$
compatible with $\omega$ determines a metric on $M$ and an operator
\begin{equation}
\label{eq:Lambda}
\Lambda\colon\Omega^{p,q}\lto\Omega^{p-1,q-1}
\end{equation}
acting on the space $\Omega^{p,q}$ of smooth $(p,q)$-forms, given by the adjoint of the Lefschetz operator $\Omega^{p-1,q-1}\to\Omega^{p,q}\colon\gamma\mapsto\gamma\wedge \omega$. It can be seen that $\Lambda$ is symplectic, that is, it does not depend on the choice of almost complex structure on $M$. Its
linear extension to adjoint-bundle valued forms will also be denoted
$\Lambda\colon\Omega^{p,q}(\ad E_H)\to\Omega^{p-1,q-1}(\ad E_H)$.

Let $\cJ$ and $\cA$ be the spaces of almost complex structures on $M$
compatible with $\omega$ and unitary connections on $(E,H)$,
respectively; their respective elements will usually be denoted $J$
and $A$. We will explain now how the K\"ahler--Yang--Mills equations
arise naturally in the construction of the symplectic quotient of a
subspace $\cP\subset\cJ\times\cA$ of `integrable pairs'.

The group of symmetries of this theory is the \emph{extended gauge group} $\cX$. Let $E_H$ be the principal U$(r)$-bundle of unitary frames of $(E,H)$. Then, $\cX$ is the group of automorphisms of $E_H$ which cover elements of the group $\cH$ of hamiltonian symplectomorphisms of $(M,\omega)$. There is a canonical short exact sequence of Lie groups
\begin{equation}
\label{eq:coupling-term-moment-map-1}
  1\to \cG \lra{} \cX \lra{\pr} \cH \to 1,
\end{equation}
where $\pr$ maps each $g\in\cX$ into the Hamiltonian symplectomorphism
$\pr(g)\in\cH$ that it covers, and so its kernel $\cG$ is the unitary
gauge group of $(E,H)$, that is, the normal subgroup of $\cX$
consisting of unitary automorphisms covering the identity map on $M$.

There are $\cX$-actions on $\cJ$ and $\cA$, which combine to give an
action on the product $\cJ\times\cA$,
\[
  g(J,A)=(\pr(g)J, gA). 
\]
Here, $\pr(g)J$ denotes the push-forward of $J$ by $\pr(g)$. To
define the $\cX$-action on $\cA$, we view the elements of $\cA$ as
$G$-equivariant splittings $A\colon TE_H\to VE_H$ of the short exact
sequence
\begin{equation}
\label{eq:principal-bundle-ses}
  0 \to VE_H \lto TE_H\lto \pi^*TM \to 0,
\end{equation}
where $VE_H\subset TE_H$ is the vertical bundle on $E_H$. 
Then the $\cX$-action on $\cA$ is
\[
g A \defeq g\circ A \circ g^{-1},
\]
where $g\colon TE\to TE$ denotes the infinitesimal action in the right-hand side.

For each unitary connection $A$, we write $A^\perp y$ for the
corresponding horizontal lift of a vector field $y$ on $M$ to a vector
field on $E_H$. Then each $A\in\cA$ determines a vector-space
splitting of the Lie-algebra short exact sequence
\begin{equation}
\label{eq:Lie-algebras-ses}
0\to\LieG\lto\LieX\lra{\pr}\LieH\to 0
\end{equation}
associated to~\eqref{eq:coupling-term-moment-map-1}, because $A^\perp\eta\in\LieX$ for all $\eta\in\LieH$. Note also that the equation
\begin{equation}\label{eq:hamiltonian-vector-field}
\eta_\varphi\lrcorner\omega=d\varphi
\end{equation}
determines an isomorphism between the space $\LieH$ of Hamiltonian
vector fields on $M$ and the space $C_0^\infty(M,\omega)$ of smooth
functions $\varphi$ such that $\int_M\varphi\vol_\omega=0$, where 
$\vol_\omega\defeq\frac{\omega^n}{n!}$.

The spaces $\cJ$ and $\cA$ have $\cX$-invariant symplectic structures
$\omega_\cJ$ and $\omega_\cA$ induced by $\omega$, that combine to
define a symplectic form on $\cJ\times\cA$, for each non-zero real
constant $\alpha$, given by
\begin{equation}
\label{eq:Sympfamily}
\omega_\alpha = \omega_\cJ + \frac{4 \alpha}{(n-1)!} \omega_\cA.
\end{equation}
The following result provides the starting point for the theory of the
K\"ahler--Yang--Mills equations. This result builds on the moment map
interpretation of the constant scalar curvature equation for a
K\"ahler metric, due to Fujiki \cite{Fujiki} and Donaldson \cite{D1}, and the classical result of Atiyah and Bott \cite{AB}.

\begin{proposition}[{\cite{AGG,GF}}]
\label{prop:momentmap-pairs}
The $\cX$-action on $(\cJ\times \cA,\omega_\alpha)$ is hamiltonian, with $\cX$-equivariant moment map $\mu_{\alpha}\colon \cJ\times \cA\to(\LieX)^*$ given by
\begin{equation}\label{eq:thm-muX}
\begin{split}
\langle \mu_{\alpha}(J,A),\zeta\rangle & = 4i\alpha\int_M \tr A\zeta\wedge(i\Lambda F_A-\lambda \Id)\vol_{\omega}\\
&- \int_M  \varphi\(S_J - \alpha \Lambda^2 \tr F_A \wedge F_A - 4i\lambda \alpha \Lambda \tr F_A\)\vol_{\omega}
\end{split}
\end{equation}
for any $\zeta\in\LieX$ covering $\eta_\varphi \in \LieH$, with $\varphi\in C_0^\infty(M,\omega)$.
\end{proposition}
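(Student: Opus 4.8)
The plan is to exploit the additivity of moment maps under a split symplectic form. Since $\cX$ acts on $\cJ\times\cA$ preserving $\omega_\alpha=\omega_\cJ+\frac{4\alpha}{(n-1)!}\omega_\cA$, it suffices to produce a moment map $\mu_\cJ$ for the $\cX$-action on $(\cJ,\omega_\cJ)$ and a moment map $\mu_\cA$ for the $\cX$-action on $(\cA,\omega_\cA)$, and to set $\mu_\alpha=\mu_\cJ+\frac{4\alpha}{(n-1)!}\mu_\cA$. Concretely, one must verify, for every $\zeta\in\LieX$ covering $\eta_\varphi\in\LieH$, the moment map identity $d\langle\mu_\alpha(\cdot),\zeta\rangle=\iota_{Y_\zeta}\omega_\alpha$ along arbitrary paths in $\cJ\times\cA$ (so that both the variation $\dot J$ of the almost complex structure and $\dot A$ of the connection contribute), where $Y_\zeta$ is the infinitesimal action of $\zeta$, and then check $\cX$-equivariance of $\mu_\alpha$.

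For the $\cJ$-factor the $\cX$-action factors through $\pr\colon\cX\to\cH$, so nothing new is needed beyond the theorem of Fujiki and Donaldson: the $\cH$-action on $(\cJ,\omega_\cJ)$ is Hamiltonian with moment map $J\mapsto S_J$, where the scalar curvature $S_J$ of the metric determined by $(\omega,J)$ is regarded as an element of $C_0^\infty(M,\omega)\cong(\LieH)^*$ acting on $\eta_\varphi$ by $-\int_M\varphi\,S_J\vol_\omega$, with $\varphi$ the normalized Hamiltonian from \eqref{eq:hamiltonian-vector-field}. Composing with the differential $\LieX\to\LieH$, $\zeta\mapsto\eta_\varphi$, yields the $S_J$-term of \eqref{eq:thm-muX}; its sign and normalization are dictated by our convention for $\omega_\cJ$.

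The content is the $\cA$-factor. Fix $A\in\cA$; as noted after \eqref{eq:Lie-algebras-ses}, $A$ splits that sequence, so every $\zeta\in\LieX$ over $\eta_\varphi$ is uniquely $\zeta=A^\perp\eta_\varphi+A\zeta$ with vertical part $A\zeta\in\LieG=\Omega^0(\ad E_H)$. First I would compute the infinitesimal action $Y_\zeta|_A\in T_A\cA=\Omega^1(\ad E_H)$: the gauge part $A\zeta$ contributes the familiar $-d_A(A\zeta)$ from the Atiyah--Bott picture, while $A^\perp\eta_\varphi$ contributes $-\iota_{\eta_\varphi}F_A$, by a Cartan-formula computation for the Lie derivative of the connection along a horizontal lift (using the structure equation and that horizontal vectors annihilate the connection form). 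Thus $Y_\zeta|_A=-d_A(A\zeta)-\iota_{\eta_\varphi}F_A$ with our signs. Pairing with $\omega_\cA$ --- which wedges $-\tr(a\wedge b)$ against $\omega^{n-1}$ and integrates --- and integrating by parts: the $d_A(A\zeta)$-term, via the Bianchi identity and Leibniz rules for $\Lambda$, produces the Atiyah--Bott term $\int_M\tr(A\zeta\cdot i\Lambda F_A)\vol_\omega$; the $\iota_{\eta_\varphi}F_A$-term, after substituting $\iota_{\eta_\varphi}\omega=d\varphi$, using $d_A F_A=0$, and a further integration by parts, collapses into $\int_M\varphi\,\Lambda^2\tr(F_A\wedge F_A)\vol_\omega$ together with a term linear in $\Lambda\tr F_A$. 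Assembling these pieces, shifting the $A\zeta$-term by the topological constant $\lambda$ (which only changes $\mu_\cA$ by a character of $\cX$, since $\int_M\Lambda F_A\vol_\omega$ is topological), and weighting by $\frac{4\alpha}{(n-1)!}$ reproduces \eqref{eq:thm-muX}. Equivariance is then immediate, since the splitting, $\Lambda F_A$, the Pontryagin density, and integration against $\vol_\omega$ are all natural under the $\cX$-action.

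The step I expect to be the main obstacle is this second integration by parts: showing that $\iota_{\eta_\varphi}F_A$ contracted against $\omega_\cA$ reorganizes --- through $d\varphi=\iota_{\eta_\varphi}\omega$, Bianchi, and the Leibniz rule for $\Lambda$ --- into $\varphi$ times the first Pontryagin density $\Lambda^2\tr(F_A\wedge F_A)$ (plus the $\lambda$-linear piece), rather than into a $1$-form that is not $d$ of a function; equivalently, that $\iota_{Y_\zeta}\omega_\cA$ is exact with exactly the claimed primitive. This is also where the normalization $\frac{4\alpha}{(n-1)!}$ and the factors of $i$ get pinned down, and where the variation must be taken along a genuine path in $\cJ\times\cA$ to confirm the moment map identity for $\omega_\alpha$ as a whole.
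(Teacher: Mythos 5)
Your proposal is correct and follows essentially the same route as the proof this paper relies on (the paper itself only cites \cite{AGG,GF} for this statement): additivity of the moment map over the split form $\omega_\cJ+\frac{4\alpha}{(n-1)!}\omega_\cA$, the Fujiki--Donaldson moment map $J\mapsto S_J$ pulled back through $\pr$ for the $\cJ$-factor, and for the $\cA$-factor the $A$-induced splitting $\zeta=A^\perp\eta_\varphi+A\zeta$ of \eqref{eq:Lie-algebras-ses}, with the vertical part giving the Atiyah--Bott term and the horizontal part producing, after the integration by parts you flag, the coupling term $\varphi\,\Lambda^2\tr F_A\wedge F_A$ together with the $\lambda$-linear piece. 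You correctly identify the one genuinely new computation (exactness of $\iota_{Y_\zeta}\omega_\cA$ with the stated primitive) as the crux; the only minor imprecision is that the $\lambda$-shift is a constant element of $(\LieX)^*$ only when the $-\lambda\tr A\zeta$ and $\Lambda\tr F_A$ terms are taken together, since each separately varies with $A$.
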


Here, $F_A$ is the curvature of $A$, $\lambda\in\RR$ is determined by
the topology of the bundle and the cohomology class $[\omega]\in H^2(M,\RR)$, and $S_J$ is the hermitian scalar curvature of $J$. Explicitly,
\[
F_A = - A[A^\perp\cdot,A^\perp\cdot]\in\Omega^2(\ad E_H),
\quad
\lambda=\frac{2\pi nc_1(E)\cdot[\omega]^{n-1}}{r[\omega]^n},
\]
with the convention $2\pi c_1(E)=[i\tr F_A]$. A key observation in
\cite{AGG,GF} is that the space $\cJ \times \cA$ has a (formally
integrable) complex structure $\mathbf{I}$ preserved by the
$\cX$-action, given by
\begin{equation}
\label{eq:complexstructureI}
\mathbf{I}_{\mid(J,A)}(\gamma,a) = (J\gamma,-a(J \cdot)),
\text{ for } (\gamma,a) \in T_J\cJ \oplus T_A\cA.
\end{equation}
For positive $\alpha$, $\mathbf{I}$ is compatible with the family of
symplectic structures \eqref{eq:Sympfamily}, and so it defines
K\"ahler structures on $\cJ\times\cA$. The condition $\alpha>0$ will
be assumed in the sequel.

Suppose now that there exist K\"{a}hler structures on $M$ with
K\"{a}hler from $\omega$. This means the subspace $\cJi \subset \cJ$
of integrable almost complex structures compatible with $\omega$ is
not empty. For each $J\in \cJi$, let $\cA^{1,1}_J\subset\cA$ be the
subspace of connections $A$ with $F_A \in \Omega_J^{1,1}(\ad E_H)$,
where $\Omega^{p,q}_J$ is the space of $(p,q)$-forms with respect to
$J$. Then the space of \emph{integrable pairs}
\begin{equation}
\label{eq:cP}
  \cP\subset \cJ\times \cA,
\end{equation}
consisting of elements $(J,A)$ with $J\in\cJi$ and $A\in\cA^{1,1}_J$,
is a $\cX$-invariant (possibly singular) K\"ahler submanifold. The
zero locus of the induced moment map $\mu_\alpha$ for the $\cX$-action
on $\cP$ corresponds precisely to the solutions of the (coupled)
\emph{K\"ahler--Yang--Mills equations}
\begin{equation}\label{eq:CKYM1}
\begin{split}
i \Lambda F_A & = \lambda \Id,\\
S_J \; - \; \alpha \Lambda^2 \tr F_A \wedge F_A & = c.
\end{split}
\end{equation}
Here, $S_J$ is the scalar curvature of the metric
$g_J=\omega(\cdot,J\cdot)$ and the constant $c\in\RR$ depends on
$\alpha$, the cohomology class of $\omega$ and the topology of $M$ and
$E$ (see \cite[Section 2]{AGG}).

One can express the K\"ahler--Yang--Mills equations from an alternative point of view in which we fix a compact complex manifold $X$ of dimension $n$, a K\"ahler class $\Omega \in
H^{1,1}(X)$ and a holomorphic vector bundle $E$ over $X$. Then these
equations, for a fixed constant parameter $\alpha\in\RR$, are
\begin{equation}\label{eq:CKYM2}
\begin{split}
i\Lambda_\omega F_H & =\lambda\Id,\\
S_\omega\;-\;\alpha\Lambda_\omega^2\tr F_H\wedge F_H & =c,
\end{split}
\end{equation}
where the unknowns are a K\"ahler metric on $X$ with K\"ahler form
$\omega$ in $\Omega$, and a hermitian metric $H$ on $E$. In this case,
$F_H$ is the curvature of the Chern connection $A_H$ of $H$ on $E$,
and $S_\omega$ is the scalar curvature of the K\"ahler metric. Note
that the operator in~\eqref{eq:Lambda} depends on $\omega$, and the
constant $c\in\RR$ depends on $\alpha$, $\Omega$ and the topology of
$X$ and $E$.

\section{The gravitating vortex equations}\label{sec:gravvort}

In this section we introduce the gravitating vortex equations, which are, in a sense, the
main object of study of the present paper. We shall derive the equations by
equivariant dimensional reduction of the K\"ahler--Yang--Mills equations on
$\Sigma \times \PP^1$.

\subsection{Gravitating vortices}\label{subsec:gravvort}

Let $\Sigma$ be a compact connected Riemann surface of arbitrary genus. Let $L$ be a holomorphic line bundle over $L$ and $\phi \in H^0(\Sigma,L)$ a holomorphic section of $L$. We fix a \emph{symmetry breaking parameter} $0 < \tau \in \RR$ and a \emph{coupling constant} $\alpha \in \RR$.

\begin{definition}
The \emph{gravitating vortex equations}, for a K\"ahler metric on $\Sigma$ with K\"ahler form $\omega$ and a hermitian metric $h$ on $L$, are

\begin{equation}\label{eq:gravvortexeq1}
\begin{split}
i\Lambda_\omega F_h + \frac{1}{2}(|\phi|_h^2-\tau) & = 0,\\
S_\omega + \alpha(\Delta_\omega + \tau) (|\phi|_h^2 -\tau) & = c.
\end{split}
\end{equation}
\end{definition}

Here, $S_\omega$ is the scalar curvature of $\omega$, $F_h$ stands for the curvature of the Chern connection of $h$, $|\phi|_h^2$ is the smooth function on $\Sigma$ given by the norm-square of $\phi$ with respect to $h$ and $\Delta_\omega$ is the Laplace operator for the metric $\omega$, defined by
$$
\Delta_\omega f = 2i \Lambda_\omega \dbar \partial f, \qquad \textrm{ for } f \in C^\infty(\Sigma).
$$

The constant $c \in \RR$ is topological, and is explicitly given by
\begin{equation}\label{eq:constantc}
c = \frac{2\pi(\chi(\Sigma) - 2\alpha\tau c_1(L))}{\Vol_\omega(\Sigma)},
\end{equation}
as can be deduced by integrating the equations.
The gravitating vortex equations for $\phi = 0$, are the condition that
$\omega$ is a constant scalar curvature K\"ahler metric on $\Sigma$ and $h$ is
a hermite-Einstein metric on $L$. By the Uniformization Theorem for Riemann
surfaces, the existence of these `trivial 
solutions' reduces by Hodge Theory to the condition $c_1(L) = \tau\Vol_\omega(\Sigma)/4\pi$.

Excluding this trivial case, the sign of $c$ plays an important role in the
existence problem for the gravitating vortex equations. For instance, for
$\alpha > 0$ and $c$ positive, 
the existence of a solution of
\eqref{eq:gravvortexeq1} with $\phi$ not identically zero forces the topology
of the surface to be that of 
the $2$-sphere, 
as $c_1(L)\geq 0$ 
implies the
positivity of the Euler characteristic $\chi(\Sigma) > 0$. Further, when $c = 0$ and $c_1(L) > 0$, the only possible topology is also the $2$-sphere. 
This important case, related to the Einstein--Bogomol'nyi equations and the physics of cosmic strings in the 
Abelian Higgs model, will be treated separately in Section \ref{sec:cosmicstring}.

\begin{remark}\label{rem:c1=0}
When $c = 0$ and $c_1(L) = 0$ the existence of gravitating vortex with $\phi \neq 0$ implies that $L$ is isomorphic to the trivial line bundle $\mathcal{O}_\Sigma$, $\phi$ is constant with $|\phi|_h^2 = \tau$, and the gravitating vortex equations are the condition that $\omega$ is a flat K\"ahler metric on an elliptic curve $\Sigma$ and $h$ is flat. This case will be discussed further in relation to the K\"ahler--Yang--Mills equations in Proposition \ref{prop:simple}.
\end{remark}


For a fixed K\"ahler metric $\omega$, the first equation in
\eqref{eq:gravvortexeq1} corresponds to the (Bogomol'nyi Abelian) vortex equation 
\begin{equation}\label{eq:vortexeq}
i\Lambda_\omega F_h + \frac{1}{2}(|\phi|_h^2-\tau) = 0,
\end{equation}
for a hermitian metric $h$ on $L$. In \cite{Noguchi,Brad,G1,G2} Noguchi, Bradlow 
and the third author gave independently and with different methods a complete characterization of the existence of \emph{Abelian vortices} on a compact Riemann surface, that is, of solutions of the equations \eqref{eq:vortexeq}.

\begin{theorem}[\cite{Brad,G1,G2}]\label{th:B-GP}
Assume that $\phi$ is not identically zero. For every fixed K\"ahler form $\omega$, there exists a unique solution $h$ of the vortex equations \eqref{eq:vortexeq} if and only if
\begin{equation}\label{eq:ineq}
c_1(L) < \frac{\tau \Vol(\Sigma)}{4\pi}.
\end{equation}
\end{theorem}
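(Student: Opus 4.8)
The plan is to reduce the vortex equation \eqref{eq:vortexeq} to a scalar PDE of Kazdan--Warner type by a conformal change of hermitian metric, and then invoke the Kazdan--Warner existence theory together with an integration argument for the necessity of \eqref{eq:ineq}. First I would fix any background hermitian metric $h_0$ on $L$ with Chern curvature $F_0$, and write a general hermitian metric as $h = e^{2f}h_0$ for $f \in C^\infty(\Sigma)$; then $F_h = F_0 - 2\dbar\partial f$ and $|\phi|_h^2 = e^{2f}|\phi|_{h_0}^2$, so that after using $\Delta_\omega f = 2i\Lambda_\omega\dbar\partial f$ the equation \eqref{eq:vortexeq} becomes the semilinear elliptic equation
\begin{equation*}
\Delta_\omega f + \frac{1}{2}\bigl(e^{2f}|\phi|_{h_0}^2 - \tau\bigr) + i\Lambda_\omega F_0 = 0
\end{equation*}
for the single unknown $f$. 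The source term $i\Lambda_\omega F_0$ integrates to $-2\pi c_1(L)/\Vol_\omega(\Sigma) \cdot \Vol_\omega(\Sigma)$ up to a constant — more precisely $\int_\Sigma i\Lambda_\omega F_0\,\vol_\omega = 2\pi c_1(L)$ — which is exactly how the topological constraint will enter.

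For the necessity of \eqref{eq:ineq}: integrating the displayed equation over $\Sigma$ against $\vol_\omega$ kills the Laplacian term and yields
\begin{equation*}
\frac{1}{2}\int_\Sigma e^{2f}|\phi|_{h_0}^2\,\vol_\omega = \frac{\tau\Vol_\omega(\Sigma)}{2} - 2\pi c_1(L),
\end{equation*}
so the left-hand side being strictly positive (here $\phi \not\equiv 0$ is used) forces $c_1(L) < \tau\Vol(\Sigma)/4\pi$. For sufficiency, I would recognize the equation as precisely of the form treated by Kazdan and Warner~\cite{KaWa}: one seeks $f$ with $\Delta_\omega f = h - W e^{2f}$ where $W = \frac{1}{2}|\phi|_{h_0}^2 \geq 0$ is nonnegative and not identically zero, and $h = -i\Lambda_\omega F_0 - \tau/2$ has average $\bar{h} = -2\pi c_1(L)/\Vol - \tau/2 < 0$ precisely under \eqref{eq:ineq}. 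The Kazdan--Warner criterion for solvability of $\Delta f = h - W e^{2f}$ with $W \geq 0$, $W \not\equiv 0$ is exactly that $\bar h < 0$, so \eqref{eq:ineq} gives a solution. Uniqueness follows from a standard maximum-principle / convexity argument: if $f_1, f_2$ both solve it, subtracting and testing with $f_1 - f_2$ shows $\int_\Sigma |\nabla(f_1-f_2)|^2 + (\text{positive})(e^{2f_1}-e^{2f_2})(f_1-f_2) = 0$, forcing $f_1 = f_2$.

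The main obstacle — really the only substantive point — is verifying that the nonlinear PDE above genuinely falls within the scope of the Kazdan--Warner theorem, i.e.\ that the hypotheses on the sign and vanishing locus of $W = \frac{1}{2}|\phi|_{h_0}^2$ and on $\bar h$ match their conditions, and keeping track of the constants (the factors of $2$, $4\pi$, and the normalization $2\pi c_1(L) = [i\tr F]$). Since $\phi$ is a holomorphic section, $|\phi|_{h_0}^2$ vanishes only on the finite zero divisor of $\phi$ and is strictly positive elsewhere, so $W \geq 0$, $W\not\equiv 0$ holds, and the Kazdan--Warner machinery applies verbatim; the necessity direction is the elementary integration already indicated. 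Thus the equivalence of solvability with \eqref{eq:ineq}, and uniqueness, both follow. (For $\tau = 1$ this is Noguchi's result~\cite{Noguchi}; the general $\tau$ case is obtained by the same argument after absorbing $\tau$ into the constants.)
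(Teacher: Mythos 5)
Your strategy --- conformal change $h=e^{2f}h_0$, reduction of \eqref{eq:vortexeq} to a single Kazdan--Warner equation, integration for necessity, the Kazdan--Warner theorem for sufficiency, and a monotonicity argument for uniqueness --- is exactly the proof of Bradlow \cite{Brad} that this paper cites and recapitulates (see Lemma \ref{lem:KW-equation} with $u=0$ and the discussion following it); the paper itself does not reprove Theorem \ref{th:B-GP}. Your reduction to the scalar equation, your necessity computation, and your uniqueness argument are all correct.

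There is, however, a sign error in the sufficiency step which, as written, breaks the equivalence with \eqref{eq:ineq}. With the paper's (positive) Laplacian $\Delta_\omega f=2i\Lambda_\omega\dbar\partial f$ one has $i\Lambda_\omega F_h=i\Lambda_\omega F_0+\Delta_\omega f$ (so $F_h=F_0+2\dbar\partial f$, not $F_0-2\dbar\partial f$), and your reduced equation, put in the form $\Delta_\omega f = k - W e^{2f}$ with $W=\tfrac12|\phi|_{h_0}^2\geq 0$, has source term $k=\tfrac{\tau}{2}-i\Lambda_\omega F_0$, not $-\tfrac{\tau}{2}-i\Lambda_\omega F_0$. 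Its mean is $\bar k=\tfrac{\tau}{2}-2\pi c_1(L)/\Vol_\omega(\Sigma)$, which is positive precisely under \eqref{eq:ineq}, and the Kazdan--Warner solvability criterion for this equation is $\bar k>0$, not $\bar k<0$: integrating $\Delta_\omega f=k-We^{2f}$ gives $\int_\Sigma k\,\vol_\omega=\int_\Sigma We^{2f}\vol_\omega>0$ under either sign convention for $\Delta$, which is exactly your own (correct) necessity computation. The quantity you wrote, $-2\pi c_1(L)/\Vol_\omega(\Sigma)-\tau/2$, is negative for every $L$ admitting a nonzero holomorphic section, so the condition ``$\bar k<0$'' holds unconditionally and is not equivalent to \eqref{eq:ineq}; the sufficiency direction therefore does not close as stated. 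The repair is purely a matter of signs, and once made the argument is complete and coincides with the cited one.
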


Inspired by  work of Witten \cite{Witten} and Taubes \cite{Taubes}, the method in \cite{G1} exploited  the dimensional reduction of the hermitian--Yang--Mills equations from four to two dimensions, combined with the theorem of Donaldson, Uhlenbeck and Yau \cite{D3,UY}. We will provide with some details of this method in Section \ref{sec:dimenred}, in order to derive equations \eqref{eq:gravvortexeq1} from the K\"ahler--Yang--Mills equations.


\subsection{The gravitating vortex equations from dimensional reduction}\label{sec:dimenred}

In this section we derive the gravitating vortex equations
\eqref{eq:gravvortexeq1} as dimensional reduction of the K\"ahler--Yang--Mills
equations \eqref{eq:CKYM2} on a rank two bundle $E$ over the product of a
compact, connected, Riemann surface $\Sigma$ with the Riemann sphere $\PP^1$.

As in the previous section, we fix a compact connected Riemann surface $\Sigma$ of arbitrary genus, a holomorphic line bundle $L$ over $\Sigma$ and $\phi \in H^0(\Sigma,L)$ a holomorphic section of $L$. There is canonically associated to $(L,\phi)$ a rank two holomorphic vector bundle $E$ over
$$
X = \Sigma \times \PP^1
$$ 
given as an extension
\begin{equation}\label{eq:bundleE}
0 \to p^*L \lto E \lto q^*\mathcal{O}_{\PP^1}(2) \to 0.
\end{equation}
Here $p$ and $q$ are the projections from $X$ to $\Sigma$ and $\PP^1$ respectively. By $\mathcal{O}_{\PP^1}(2)$ we denote as usual the holomorphic line bundle with Chern class $2$ on $\PP^1$, isomorphic to the holomorphic tangent bundle of $\PP^1$. Extensions as above are parametrized by
$$
H^1(X,p^*L \otimes q^*\cO_{\PP^1}(-2)) \cong H^0(\Sigma,L) \otimes H^1(\PP^1,\mathcal{O}_{\PP^1}(-2)) \cong H^0(\Sigma,L),
$$
and we choose $E$ to be the extension determined by $\phi$.

Let $\SU(2)$ act on $X$, trivially on $\Sigma$, and in the standard way on
$\PP^1 \cong \SU(2)/\U(1)$. This action can be lifted to trivial actions on 
$E$ and  $p^*L$ and the standard action on $\mathcal{O}_{\PP^1}(2)$. Since the induced actions on $H^0(\Sigma,L)$ and $H^1(\PP^1,\mathcal{O}_{\PP^1}(-2)) \cong H^0(\PP^1,\mathcal{O}_{\PP^1})^* \cong \CC$ are trivial, $E$ is an $\SU(2)$-equivariant holomorphic vector bundle over $X$.

For $\tau \in \RR_{>0}$, consider the $\SU(2)$-invariant K\"ahler metric on $X$ whose K\"ahler form is
\begin{equation}\label{eq:omegatau}
\omega_\tau = p^*\omega + \frac{4}{\tau} q^*\omega_{FS},
\end{equation}
where $\omega$ is a K\"ahler form on $\Sigma$ and $\omega_{FS}$ is the Fubini--Study metric on $\PP^1$, given in homogeneous coordinates by
$$
\omega_{FS} = \frac{i dz \wedge d\overline z}{(1+|z|^2 )^2}
$$
and such that $\int_{\PP^1}\omega_{FS} = 2\pi$.

We can now state the main result of this section. Throughout, we will assume
that the coupling constants $\alpha$ in 
\eqref{eq:gravvortexeq1} 
and \eqref{eq:CKYM2} coincide.

\begin{proposition}\label{prop:dimred}
The triple $(\Sigma,L,\phi)$ admits a solution $(\omega,h)$ of the gravitating vortex equation \eqref{eq:gravvortexeq} with parameter $\tau$ if and only if $(X,E)$ admits an $\SU(2)$-invariant solution of the K\"ahler--Yang--Mills equations \eqref{eq:CKYM2} with K\"ahler form $\omega_\tau= p^*\omega + \frac{4}{\tau} q^*\omega_{FS}$.
\end{proposition}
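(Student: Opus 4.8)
The plan is to prove the equivalence by equivariant dimensional reduction, following the strategy pioneered in \cite{G1} for the vortex equations, but now carried through for the coupled system \eqref{eq:CKYM2}. The key structural fact to exploit is that an $\SU(2)$-invariant solution on $X = \Sigma \times \PP^1$ is determined by data on $\Sigma$ alone, because the $\SU(2)$-action on $\PP^1$ is transitive. First I would handle the hermitian metric and the first equation. An $\SU(2)$-invariant hermitian metric $H$ on $E$ restricting to the extension \eqref{eq:bundleE} decomposes, via the $C^\infty$-splitting $E \cong p^*L \oplus q^*\cO_{\PP^1}(2)$ that is $\SU(2)$-equivariant, into a hermitian metric $h$ on $L$ (pulled back), the fixed Fubini--Study-type metric on $\cO_{\PP^1}(2)$, and a second fundamental form term built from $\phi$; invariance under $\SU(2)$ forces the off-diagonal part to be governed precisely by $\phi$ together with the unique invariant harmonic representative on $\PP^1$. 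One then computes the curvature $F_H$ of the Chern connection of $H$ with respect to $\omega_\tau = p^*\omega + \tfrac{4}{\tau}q^*\omega_{FS}$. A direct Chern-connection computation for an extension gives $F_H$ in block form, with the diagonal blocks $F_h$ and $F_{\cO(2)}$ corrected by $\phi\wedge\phi^*$-type terms, and the key point is that the contraction $i\Lambda_{\omega_\tau} F_H = \lambda\Id$ splits into: (i) the $\PP^1$-component, which after using $\int_{\PP^1}\omega_{FS} = 2\pi$ and $c_1(\cO_{\PP^1}(2)) = 2$ fixes $\lambda$ and is automatically satisfied by the invariant ansatz; and (ii) the $\Sigma$-component, which reduces exactly to $i\Lambda_\omega F_h + \tfrac12(|\phi|_h^2 - \tau) = 0$. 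The $\tau$-dependence enters through the normalization $\tfrac{4}{\tau}$ of the Fubini--Study factor, which is what produces the symmetry-breaking constant.

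Next I would treat the second, scalar-curvature equation. The scalar curvature $S_{\omega_\tau}$ of the product metric is simply $S_\omega + S_{(4/\tau)\omega_{FS}} = S_\omega + \tau$, a constant shift by the constant scalar curvature of the rescaled round sphere (normalized so that the constant is $\tau$, consistent with the $\tfrac{4}{\tau}$ factor). The Pontryagin term $\Lambda_{\omega_\tau}^2 \tr F_H \wedge F_H$ must be computed from the block form of $F_H$ found above: the contributions involving only $\Sigma$-directions vanish because $\tr F_h \wedge F_h = 0$ on a surface, so the surviving terms are the mixed $\Sigma$-$\PP^1$ wedge products coming from the second fundamental form $\phi$. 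A computation of these mixed terms, using that on $\PP^1$ the relevant $(1,1)$-form is a fixed multiple of $\omega_{FS}$, yields exactly $\alpha$ times a combination of $|\phi|_h^2$, its Laplacian $\Delta_\omega |\phi|_h^2$ (arising from the $dd^c$ of the second fundamental form norm), and a constant multiple of $|\phi|_h^2 - \tau$. Collecting everything, the $X$-equation $S_{\omega_\tau} - \alpha \Lambda_{\omega_\tau}^2 \tr F_H \wedge F_H = c$ becomes $S_\omega + \alpha(\Delta_\omega + \tau)(|\phi|_h^2 - \tau) = c$ after absorbing the constant $\tau$-shifts and using \eqref{eq:vortexeq} to re-express $i\Lambda_\omega F_h$ in terms of $|\phi|_h^2 - \tau$. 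The constant $c$ on $X$ then matches \eqref{eq:constantc} after a short Chern--Weil computation with $c_1(E) = c_1(L) + 2$ and $[\omega_\tau]$.

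Finally, for the converse direction, I would invoke the rigidity of the $\SU(2)$-action: any $\SU(2)$-invariant K\"ahler form on $X$ cohomologous to $\omega_\tau$ is of the form $p^*\omega' + \tfrac{4}{\tau}q^*\omega_{FS}$ for a unique K\"ahler form $\omega'$ on $\Sigma$ (invariant forms on $\PP^1$ in a fixed class are unique, being the orbit of a single point under a transitive compact group, by averaging), and any $\SU(2)$-invariant hermitian metric on the equivariant bundle $E$ is of the reduced form described in the first paragraph; then the computation runs in reverse. The main obstacle I anticipate is the bookkeeping in computing $F_H$ for the extension and in particular the precise identification of the mixed Pontryagin terms $\Lambda_{\omega_\tau}^2 \tr F_H \wedge F_H$ as exactly $(\Delta_\omega + \tau)(|\phi|_h^2 - \tau)$ up to the constant; this requires choosing the invariant $C^\infty$-splitting of \eqref{eq:bundleE} carefully so that the second fundamental form is a fixed contraction of $\phi$ with the unique harmonic $(0,1)$-form on $\PP^1$ with values in $\cO_{\PP^1}(-2)$, and then tracking how $\bar\partial\partial$ of the resulting norm-squared produces the Laplacian term. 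The constant-matching (verifying \eqref{eq:constantc}) is routine Chern--Weil but should be done explicitly to pin down the normalizations $4/\tau$ and the $\tfrac12$ in \eqref{eq:vortexeq}.
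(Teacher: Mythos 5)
Your overall strategy---equivariant reduction via the smooth splitting $E\cong p^*L\oplus q^*\cO_{\PP^1}(2)$, a block computation of $F_H$, and identification of the mixed Pontryagin terms with the $(\Delta_\omega+\tau)(|\phi|^2_h-\tau)$ term---is the one the paper follows. But there is a genuine gap in your ansatz for the invariant hermitian metric: you take the metric on the $q^*\cO_{\PP^1}(2)$ summand to be a \emph{fixed} Fubini--Study-type metric. The general $\SU(2)$-invariant metric is $H=\mathbf{h}_1\oplus\mathbf{h}_2$ with $\mathbf{h}_1=h_1$ a metric on $L$ and $\mathbf{h}_2=h_2\otimes(\text{FS-type metric})$ for an \emph{arbitrary} positive function $h_2$ on $\Sigma$, and this extra function is essential. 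The equation $i\Lambda_{\omega_\tau}F_H=\lambda\Id$ has two independent diagonal components,
\[
i\Lambda_\omega F_{h_1}+\tfrac14|\phi|^2_h=\lambda,
\qquad
i\Lambda_\omega F_{h_2}-\tfrac14|\phi|^2_h+\tfrac{\tau}{2}=\lambda;
\]
only the off-diagonal components $\Lambda D'\beta=0=\Lambda D''\beta^*$ are automatic. With your ansatz $F_{h_2}=0$, the second diagonal equation forces $|\phi|^2_h$ to be constant, which generically has no solution: the system is overdetermined, and your claim that the $\PP^1$-component is ``automatically satisfied by the invariant ansatz'' is where the argument breaks.

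The correct reduction takes $h=h_1\otimes h_2^{-1}$ (not $h_1$ itself) as the unknown of the vortex equation, obtained by subtracting the two diagonal equations; and in the converse direction ``running the computation in reverse'' requires first solving the auxiliary Poisson equation $\Delta_\omega f=2\lambda+\tfrac12|\phi|^2_h-\tau$ for $f=\log h_2$ (solvable because the right-hand side has zero mean over $\Sigma$, by the definition of $\lambda$) and then setting $h_1=h\otimes h_2$. Relatedly, the Pontryagin term contracts to $-\Delta_\omega|\phi|^2_h-2i\tau\Lambda_\omega F_{h_2}$, and it is precisely the substitution of the second diagonal equation into the term $2i\tau\Lambda_\omega F_{h_2}$ that produces the $\tau(|\phi|^2_h-\tau)$ summand of the second gravitating vortex equation; with $h_2$ trivial that mechanism, and hence the constant bookkeeping, disappears.
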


\begin{proof}
We start recalling a few facts from the proof of \cite[Proposition 12]{G1}. Firstly, the class of the extension $E$ is represented by the $\SU(2)$-invariant element
$$
\beta = p^*\phi \otimes q^*\eta \in \Omega^{0,1}(X,\Hom(q^*\cO_{\PP^1}(2),p^*L)),
$$
where, in homogeneous coordinates,
$$
\eta = \frac{\sqrt{8\pi}}{\tau}\frac{dz \otimes d\overline{z}}{(1 + |z|^2)^2} \in \Omega^{0,1}(\PP^1,\Hom(\cO_{\PP^1}(2),\cO_{\PP^1})).
$$
Let $H$ be an $\SU(2)$-invariant hermitian metric on $E$. Since $H$ is $\SU(2)$-invariant and the actions of $\SU(2)$ on $p^*L$ and $q^*\mathcal{O}_{\PP^1}(2)$ correspond to different weights, it has to be of the form 
$$
H = \mathbf{h}_1 \oplus \mathbf{h}_2 
$$
with respect to the natural smooth splitting $E \cong p^*L \oplus q^*\cO_{\PP^1}(2)$, for hermitian metrics $\mathbf{h}_1$ on $p^*L$ and $\mathbf{h}_2$ on $q^*\cO_{\PP^1}(2)$. Moreover, we can assume
$$
\mathbf{h}_1 = h_1, \qquad  \mathbf{h}_2 = h_2 \otimes \frac{8\pi}{\tau}\frac{dz \otimes d\overline z}{(1+|z|^2)^2}
$$ 
for a hermitian metric $h_1$ on $L$ and $h_2$ on $\cO_X$. Using this, the Chern connection of the invariant metric $H$ is of the form
\[ 
A_H = \left( \begin{array}{cc}
A_{h_1} & \beta \\
- \beta^* & A_{\mathbf{h}_2}
\end{array} \right)
\]
and the correspoding curvature is
 \[ 
F_H = \left( \begin{array}{cc}
F_{h_1} - \beta \wedge \beta^* & D'\beta \\
- D'' \beta^* & F_{\mathbf{h}_2} - \beta^* \wedge \beta
\end{array} \right),
\]
where the operators $D'$ and $D''$ are given by the $(1,0)$-valued and $(0,1)$-valued parts of the connection $D = D' + D''$ on $p^*L \otimes q^*\cO_{\PP^1}(-2)$ induced by $A_{h_1}$ and $A_{\mathbf{h}_2}$.

We go now for the proof of the statement: it will reduce to \cite[Proposition 12]{G1} once we calculate the term 
\begin{align*}
\Lambda^2_{\omega_\tau}\tr F_H \wedge F_H & = \Lambda^2_{\omega_\tau}\((F_{h_1} - \beta \wedge \beta^*)^2 - 2 \partial \beta \wedge \dbar \beta^* + (F_{\mathbf{h}_2} - \beta^* \wedge \beta)^2\)
\end{align*}
appearing in the K\"ahler--Yang--Mills equations \eqref{eq:CKYM2}.

\begin{lemma}
Consider the hermitian metric on $L$ given by $h = h_1 \otimes h_2^{-1}$. Then, 
\begin{equation}\label{eq:lemma}
\Lambda^2_{\omega_\tau}\tr F_H \wedge F_H = - \Delta_\omega |\phi|^2_h - 2i\tau \Lambda_\omega F_{h_2}.
\end{equation}
\end{lemma}

To prove \eqref{eq:lemma}, recall from the proof of \cite[Proposition 12]{G1} the formulae

\begin{align*}
\beta \wedge \beta^* &= \frac{i}{\tau}|\phi|^2_h \omega_{FS}\\
D' \beta \wedge D'' \beta^* & = \partial \phi \otimes \eta \wedge \dbar \phi^* \otimes \eta^*\\
& = \partial \phi\wedge \dbar \phi^* \otimes  \eta \wedge \eta^* = \frac{i}{\tau}\partial \phi\wedge \dbar \phi^* \otimes  \omega_{FS}.
\end{align*}
Using now
$$
F_{\mathbf{h}_2} = F_{h_2} - 2i\omega_{FS}
$$
combined with the Weitzenb\"ock-type formula
$$
\Delta |\phi|^2_h = 2 i \Lambda_\omega \dbar \partial |\phi|^2_h = 2i\Lambda_\omega(\partial \phi \wedge \dbar \phi^* + |\phi|^2_hF_h),
$$
we obtain the desired identity \eqref{eq:lemma}
\begin{align*}
\Lambda^2_{\omega_\tau}\tr F_H \wedge F_H & = \Lambda^2_{\omega_\tau}\(-\frac{i}{2}|\phi|^2_h(F_{h_1} - F_{h_2})  -\frac{i\tau}{2}F_{h_2} - \frac{i}{2}\partial \phi \wedge \dbar \phi^*\) \otimes \frac{4}{\tau}\omega_{FS}\\
& = - \Delta |\phi|^2_h - 2i\tau \Lambda F_{h_2}.
\end{align*}
With formula \eqref{eq:lemma} at hand, we can prove now the statement of the Proposition. Suppose first that $(\omega_\tau,\mathbf{h})$ is a solution of the K\"ahler--Yang--Mils equations \eqref{eq:CKYM0}. Then we have
\begin{align*}
i\Lambda_\omega F_{h_1} + \frac{1}{4}|\phi|^2_h & = \lambda,\\
i\Lambda_\omega F_{h_2} - \frac{1}{4}|\phi|^2_h + \frac{\tau}{2} & = \lambda,\\
S_\omega + \alpha (\Delta_\omega |\phi|^2_h + 2i\tau \Lambda_\omega F_{h_2}) & = c.
\end{align*}
Substracting the first two equations and using the second equality on the third equation we obtain the result.

On the other hand, if $(\omega,h)$ is a solution of the gravitating vortex equations \eqref{eq:gravvortexeq1} we have to solve the system
\begin{align*}
i\Lambda_\omega F_{h_1} + \frac{1}{4}|\phi|^2_h & = \lambda,\\
i\Lambda_\omega F_{h_2} - \frac{1}{4}|\phi|^2_h + \frac{\tau}{2} & = \lambda,\\
\Lambda_{\omega_{FS}}D'\beta & = 0\\
\Lambda_{\omega_{FS}} D''\beta^* & = 0\\
S_\omega + \alpha (\Delta_\omega |\phi|^2_h + 2i\tau \Lambda_\omega F_{h_2}) & = c.
\end{align*}
As in \cite[Proposition 12]{G1}, the third and fourth equations are automatically satisfied while the rest are solved by the ansatz $h_2 = e^f$, $h_1 = h \otimes h_2$ and 
$$
\qquad \qquad \qquad \qquad \qquad \qquad \qquad \Delta_\omega f = 2i \Lambda_\omega F_{h_2} = 2\lambda + \frac{1}{2}|\phi|^2_h - \tau.  \qquad \qquad \qquad \qquad \qedhere
$$
\end{proof}


When $c_1(L) = 0$, the gravitating vortex equations are solved trivially by a constant scalar curvature K\"ahler metric on $\Sigma$ and a flat hermitian metric on $L$ (cf. Remark \ref{rem:c1=0}). In addition, if $\phi \neq 0$ the holomorphic bundle $E$ is of the form
$$
0 \to p^*\mathcal{O}_{\Sigma} \lto E \lto q^*\mathcal{O}_{\PP^1}(2) \to 0,
$$
with non-trivial extension class, and the previous fact combined with Proposition \ref{prop:dimred} provides a straighforward existence result for the K\"ahler--Yang--Mills equations, that we state in the next proposition.

\begin{proposition}\label{prop:simple}
If $c_1(L) = 0$ and $\phi \neq 0$ there exists a unique $\SU(2)$-invariant solution of the K\"ahler--Yang--Mills equations with K\"ahler class $[\omega_\tau]$, for any given $\tau > 0$ and $\alpha \in \RR$.
\end{proposition}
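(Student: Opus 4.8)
The plan is to reduce the claim, via Proposition~\ref{prop:dimred}, to constructing a solution of the gravitating vortex equations \eqref{eq:gravvortexeq1} on $(\Sigma, L, \phi)$ when $c_1(L) = 0$ and $\phi \neq 0$, and then establish uniqueness among $\SU(2)$-invariant solutions of the Kähler--Yang--Mills equations on $(X,E)$. First I would observe that $c_1(L) = 0$ together with $\phi \in H^0(\Sigma, L)$ nonzero forces $L \cong \mathcal{O}_\Sigma$ and $\phi$ to be a nowhere-vanishing constant section (a nonzero holomorphic section of a degree-zero line bundle on a compact Riemann surface has no zeros, so its divisor is trivial), exactly as recorded in Remark~\ref{rem:c1=0}. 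Then \eqref{eq:constantc} gives $c = 2\pi\chi(\Sigma)/\Vol_\omega(\Sigma)$ for any Kähler form $\omega$ in the prescribed class, and the existence inequality $c_1(L) = 0 < \tau\Vol(\Sigma)/(4\pi)$ of Theorem~\ref{th:B-GP} is automatic.

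Next I would exhibit the explicit solution. Take $h$ to be a flat hermitian metric on $L \cong \mathcal{O}_\Sigma$ normalized so that $|\phi|_h^2 \equiv \tau$ (possible since $\phi$ is a nonvanishing constant and $F_h = 0$); then the first equation in \eqref{eq:gravvortexeq1} reads $i\Lambda_\omega F_h + \tfrac12(|\phi|_h^2 - \tau) = 0$, which holds because both terms vanish. With $|\phi|_h^2 - \tau \equiv 0$ the second equation collapses to $S_\omega = c$, i.e.\ $\omega$ must be a constant scalar curvature Kähler metric on $\Sigma$; this exists in the given class by the Uniformization Theorem (the value of the constant being $c = 2\pi\chi(\Sigma)/\Vol_\omega(\Sigma)$, consistent with \eqref{eq:constantc}). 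By Proposition~\ref{prop:dimred} the resulting pair $(\omega, h)$ corresponds to an $\SU(2)$-invariant solution of \eqref{eq:CKYM2} on $(X, E)$ with Kähler class $[\omega_\tau]$, which proves existence. Note the nontriviality of the extension class of $E$ when $\phi \neq 0$ causes no difficulty: $H$ is still block-diagonal as in the proof of Proposition~\ref{prop:dimred}, and the off-diagonal curvature contributions are absorbed by the term $\Lambda^2_{\omega_\tau}\tr F_H \wedge F_H$ via \eqref{eq:lemma}.

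For uniqueness I would argue on the gravitating-vortex side. Any $\SU(2)$-invariant solution of \eqref{eq:CKYM2} on $(X,E)$ with Kähler class $[\omega_\tau]$ gives, by Proposition~\ref{prop:dimred}, a solution $(\omega, h)$ of \eqref{eq:gravvortexeq1}; but Theorem~\ref{th:B-GP} says the vortex equation \eqref{eq:vortexeq} has a \emph{unique} solution $h$ for each fixed $\omega$, and in the case $c_1(L) = 0$ this forces $F_h = 0$ and $|\phi|_h^2 \equiv \tau$ (integrate the first equation of \eqref{eq:gravvortexeq1} and use that $\int_\Sigma i\Lambda_\omega F_h\,\vol_\omega = 2\pi c_1(L) = 0$ together with the sign of the pointwise term). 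Hence the second equation again forces $S_\omega = c$, and constant scalar curvature metrics in a fixed class on a Riemann surface are unique up to isometry by Uniformization; tracking this back through Proposition~\ref{prop:dimred} (where the auxiliary data $h_1, h_2, f$ are determined by $\omega, h$) yields uniqueness of the invariant Kähler--Yang--Mills solution. The main obstacle, and the only genuinely delicate point, is pinning down the precise sense of "unique" and checking that no moduli are hidden in the dimensional reduction correspondence — i.e.\ that the map of Proposition~\ref{prop:dimred} is a bijection onto invariant solutions and not merely a surjection — which requires unwinding the ansatz $h_2 = e^f$, $h_1 = h \otimes h_2$, $\Delta_\omega f = 2\lambda + \tfrac12|\phi|_h^2 - \tau$ and noting that $f$ (hence $h_1, h_2$) is uniquely determined by $(\omega, h)$.
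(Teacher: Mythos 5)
Your proposal is correct and follows essentially the same route as the paper: existence by taking the flat metric $h$ on $L\cong\mathcal{O}_\Sigma$ normalized so that $|\phi|_h^2=\tau$ together with a constant scalar curvature metric $\omega$, and uniqueness by combining the uniqueness of vortex solutions (Theorem~\ref{th:B-GP}), which forces $h'$ flat and $|\phi|_{h'}^2=\tau$, with the resulting constancy of $S_{\omega'}$. Your extra remarks (the integration argument showing $F_h=0$, and the check that no moduli hide in the dimensional reduction ansatz) only make explicit steps the paper leaves implicit.
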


\begin{proof}
By Proposition \ref{prop:dimred}, a solution is provided by taking $\omega$ the unique constant scalar curvature metric on $\Sigma$ with volume fixed by $[\omega_\tau]$ and $h$ the unique flat hermitian metric such that $\tau = |\phi|^2_h$. Given a different solution of the gravitating vortex equations $(\omega',h')$ inducing the same K\"ahler class $[\omega_\tau]$, by uniqueness of solutions of the vortex equations (Theorem \ref{th:B-GP}) it follows that $h'$ is flat and hence $\tau = |\phi|^2_{h'}$. We conclude that $\omega'$ has constant scalar curvature and therefore $\omega' = \omega$, which implies $h = h'$.
\end{proof}

\section{Existence of gravitating vortices in the weak coupling limit}

In this section we fix the symmetry breaking parameter $\tau > 0$, but allow
the coupling constant $\alpha$ to vary. The goal is to prove an existence
theorem for the gravitating vortex equations \eqref{eq:gravvortexeq1} in the
\emph{weak coupling limit} $0 < |\alpha| \ll 1$, by deforming solutions with
coupling constant $\alpha = 0$. 
For $\alpha = 0$, the equations \eqref{eq:gravvortexeq1} are equivalent to 
the condition that $\omega$ is a constant scalar curvature K\"ahler metric on $\Sigma$ (which exists by the Uniformization Theorem) and $h$ is a solution of the vortex equations \eqref{eq:vortexeq}.

\begin{theorem}\label{th:existencemmap}

Assume that $g(\Sigma) \geqslant 1$  and that $\phi$ is not identically zero. Then, 
for any constant scalar curvature metric $\omega$ on $\Sigma$ such that
\begin{equation}\label{eq:ineq3}
0 < c_1(L) < \frac{\tau \Vol_\omega(\Sigma)}{4\pi}
\end{equation}
holds, there exists $\epsilon > 0$ such that for all $\alpha \in \RR$ with $0 < |\alpha| < \epsilon$ there exists a solution $(\omega_\alpha,h_\alpha)$ of the gravitating vortex equations \eqref{eq:gravvortexeq1}.
\end{theorem}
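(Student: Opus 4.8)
The plan is to deform, from the uncoupled solution at $\alpha=0$, by the Implicit Function Theorem in suitable Banach spaces, treating the gravitating vortex equations \eqref{eq:gravvortexeq1} as a family of nonlinear elliptic operators parametrized by $\alpha$. First I would fix the constant scalar curvature metric $\omega$ and, by Theorem~\ref{th:B-GP} (which applies precisely because \eqref{eq:ineq3} holds), the unique hermitian metric $h_0$ on $L$ solving the vortex equation \eqref{eq:vortexeq}; the pair $(\omega,h_0)$ is then a solution of \eqref{eq:gravvortexeq1} at $\alpha=0$. Next I would write the unknown K\"ahler metric as $\omega_u = \omega + i\partial\dbar u$ (or via a conformal factor $\omega_u = e^{2u}\omega$, which on a surface is equivalent up to constants) and the unknown hermitian metric as $h = e^{2f}h_0$, so that the pair of equations becomes a map
\[
\mathcal{F}\colon \RR \times C^{k+2,\beta}(\Sigma)\times C^{k+2,\beta}(\Sigma) \lto C^{k,\beta}_0(\Sigma)\times C^{k,\beta}_0(\Sigma),
\qquad \mathcal{F}(\alpha,u,f)=0 \iff \text{\eqref{eq:gravvortexeq1} holds}.
\]
Using Lemma~\ref{lem:KW-equation} this is precisely the Kazdan--Warner-type system \eqref{eq:KWtype0}; one checks directly that $\mathcal{F}(0,0,0)=0$ and that $\mathcal{F}$ is smooth (indeed real-analytic) in all variables.

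The core of the argument is then to show that the partial linearization $D_{(u,f)}\mathcal{F}_{(0,0,0)}$ is an isomorphism onto the space of pairs of functions of zero integral. This linearization decouples into a lower-triangular system: the linearization of the first (vortex) equation in the direction $f$ is $\Delta_\omega + |\phi|^2_{h_0}$, which is a positive operator (since $|\phi|^2_{h_0}\geq 0$ and $\phi\not\equiv 0$), hence invertible; and the linearization of the second equation in the direction $u$, after substituting the solved-for $f$, is governed by $\Delta_\omega^2 - c\,\Delta_\omega$ plus lower-order terms, composed with $\Delta_\omega$ coming from the $\Delta u$ in \eqref{eq:KWtype0}. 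For $g(\Sigma)>1$ we have $c<0$, so $\Delta_\omega - c$ is invertible and the whole operator is an isomorphism; this is the route that recovers Bradlow's argument. The genuinely delicate case is $g(\Sigma)=1$, where $\chi(\Sigma)=0$ and at $\alpha=0$ the constant $c=0$, so that the $u$-linearization has a kernel consisting of the constants (equivalently, the flat metric on the elliptic curve admits Hamiltonian Killing fields). Here I would invoke the moment-map interpretation of the gravitating vortex equations inherited from Proposition~\ref{prop:momentmap-pairs} via the dimensional reduction of Proposition~\ref{prop:dimred}: the cokernel of the linearized operator is identified with the space of Hamiltonian Killing vector fields on $(\Sigma,\omega)$ that also preserve the vortex data, and one shows this obstruction space is transverse to the image of $\partial_\alpha\mathcal{F}$, or else works $\cG$-equivariantly and mods out the residual automorphisms, exactly as in the symplectic treatment of the ordinary vortex equations in \cite{G2}. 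Once surjectivity modulo this finite-dimensional, integrable obstruction is established, the Implicit Function Theorem yields a smooth family $(\omega_\alpha,h_\alpha)$ of solutions for $|\alpha|<\epsilon$.

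The main obstacle is precisely the genus-one case: the appearance of a nontrivial kernel/cokernel in the linearized second equation at $\alpha=0$ means the bare Implicit Function Theorem does not apply, and one must show the obstruction is unobstructed in the deformation direction $\alpha$. I expect this to require the full force of the symplectic reformulation — realizing the linearized operator as (a component of) the Hessian of the moment map $\mu_\alpha$ at a zero, showing the kernel consists of holomorphic/Killing symmetries, and checking that these symmetries do not obstruct the $\alpha$-deformation (e.g. because the relevant Futaki-type invariant vanishes for the symmetric configuration, or because one can quotient by the automorphism group acting on the space of solutions). The rest — Banach space set-up, smoothness of $\mathcal{F}$, ellipticity and Fredholm properties of the linearization, elliptic regularity to upgrade $C^{k,\beta}$ solutions to $C^\infty$ — is routine.
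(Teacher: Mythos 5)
Your treatment of the case $g(\Sigma)>1$ is essentially the paper's first proof: pass to the Kazdan--Warner system \eqref{eq:KWtype} via Lemma~\ref{lem:KW-equation}, observe that the linearization at $(0,f_0,0)$ decouples, invert $\Delta_\omega+e^{2f_0}|\phi|^2$ by positivity and $\Delta_\omega-\tfrac{4\pi\chi(\Sigma)}{\Vol_\omega(\Sigma)}$ because $\chi(\Sigma)<0$, and apply the Implicit Function Theorem. That part is fine.

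The gap is in your genus-one argument, and it starts from a false premise: the flat metric on an elliptic curve does \emph{not} admit nonzero Hamiltonian Killing vector fields. Its Killing fields are translations, whose contraction with $\omega$ is a harmonic, non-exact $1$-form (since $H^1(T^2,\RR)\neq 0$), so they are Killing but not Hamiltonian. The kernel you correctly detect in the conformal/Kazdan--Warner formulation at $g=1$ (the constants in $\dot u$, since $c=0$ there) is not an automorphism or a genuine obstruction at all \textemdash{} it is the spurious direction of rescaling the volume, i.e.\ of changing the K\"ahler class, which the conformal parametrization fails to fix. Consequently the machinery you propose to kill it (transversality of the cokernel to $\partial_\alpha\mathcal{F}$, vanishing of a Futaki-type invariant, modding out residual automorphisms) is both unjustified as written and unnecessary. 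The paper's resolution is simpler: parametrize the metric deformations by K\"ahler potentials $\widetilde\omega=\omega+2i\partial\dbar\varphi$ with $\varphi\in C_0^\infty(\Sigma)$ of zero mean, so the K\"ahler class is fixed. Then the linearization of the scalar-curvature equation at the cscK metric is $-P^*P\varphi-(dS_\omega,d\varphi)_\omega$, the Lichnerowicz operator, whose kernel consists precisely of potentials of Hamiltonian Killing fields (Lemma~\ref{lem:deltaBcscK}); for every $g(\Sigma)\geqslant 1$ \textemdash{} including $g=1$ \textemdash{} this kernel is zero, the full linearization is an isomorphism by the same positivity of $\Delta_\omega+|\phi|_h^2$, and the bare Implicit Function Theorem applies with no obstruction theory. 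To repair your proof you should either switch to this zero-mean potential parametrization from the outset, or explicitly quotient the conformal formulation by the constant (volume-rescaling) direction; as it stands, the $g=1$ case is not proved.
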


Our main tool to prove this result will be the Implicit Function Theorem in 
Banach spaces. As a  consequence of this theorem, as $|\alpha| \to 0$, the 
pair $(\omega_\alpha,h_\alpha)$ converges uniformly to $(\omega,h')$, 
where $h'$ is the unique solution of the Abelian vortex equations 
\eqref{eq:vortexeq} with respect to $\omega$, as stated in Theorem \ref{th:B-GP}.

\subsection{Kazdan--Warner type equations and the weak coupling limit}
\label{sec:KazdanWarner}

In this section we deduce an equivalent formulation of the gravitating 
vortex equations in terms of Kazdan--Warner type equations.
This is inspired by Bradlow's approach \cite{Brad}, where the vortex equations are reduced  
to a differential equation for a single function 
on the Riemann surface, already studied by Kazdan and Warner \cite{KaWa}.
We use this, and apply the Implicit Function Theorem in Banach spaces,
to prove the result in Theorem \ref{th:existencemmap} when $g(\Sigma)>1$.

To obtain the Kazdan--Warner type equations, let $L$ be a holomorphic line 
bundle over $\Sigma$ and $\phi\in H^0(L)$. 
Let $\omega$ be a constant scalar curvature metric on $\Sigma$ and $h$ the 
unique hermite--Einstein metric on $L$ with respect to $\omega$, that is,
$$
S_\omega = 2\pi\chi(\Sigma)/\Vol_\omega(\Sigma), \qquad i \Lambda_\omega F_h = 2\pi c_1(L)/\Vol_\omega(\Sigma).
$$ 
Given smooth functions $u,f$ on $\Sigma$, consider the conformal rescaling $\omega' = e^{2u} \omega$, $h' = e^{2f} h$. In the sequel, we denote by $\Delta$ the Laplacian of the constant scalar curvature metric $\omega$ and also set $|\phi|^2 = |\phi|_h^2$.

\begin{lemma}\label{lem:KW-equation}
The pair $(\omega',h')$ satisfies the gravitating vortex equations \eqref{eq:gravvortexeq1} if and only if
\begin{equation}\label{eq:KWtype}
\begin{split}
\Delta f + \frac{1}{2}e^{2u}(e^{2f}|\phi|^2-\tau) & = - 2\pi c_1(L)/\Vol(X),\\
\Delta \(u + \alpha e^{2f}|\phi|^2 - 2 \alpha \tau f\) + c(1-e^{2u}) & = 0,
\end{split}
\end{equation} 
where $c$ is given by \eqref{eq:constantc}.
\end{lemma}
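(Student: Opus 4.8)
The strategy is a direct computation: translate each of the two gravitating vortex equations in \eqref{eq:gravvortexeq1} under the conformal rescalings $\omega' = e^{2u}\omega$, $h' = e^{2f}h$, and match with \eqref{eq:KWtype}. The key input is the standard conformal transformation law for the relevant curvature quantities on a Riemann surface. First I would record how $\Lambda_{\omega'} F_{h'}$ transforms. Since $F_{h'} = F_h + 2\dbar\partial f$ (the curvature of a conformally rescaled metric on a line bundle) and $\Lambda_{\omega'} = e^{-2u}\Lambda_\omega$ in complex dimension one, one gets $i\Lambda_{\omega'}F_{h'} = e^{-2u}\bigl(i\Lambda_\omega F_h - \Delta f\bigr)$, using $\Delta f = 2i\Lambda_\omega\dbar\partial f$. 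Also $|\phi|^2_{h'} = e^{2f}|\phi|^2_h$. Plugging into the first equation of \eqref{eq:gravvortexeq1} and multiplying through by $e^{2u}$:
\[
i\Lambda_\omega F_h - \Delta f + \tfrac{1}{2}e^{2u}\bigl(e^{2f}|\phi|^2 - \tau\bigr) = 0.
\]
Now substitute $i\Lambda_\omega F_h = 2\pi c_1(L)/\Vol_\omega(\Sigma)$ (the hermite--Einstein normalization of $h$) to obtain exactly the first line of \eqref{eq:KWtype} (with $\Vol(X) = \Vol_\omega(\Sigma)$ in the notation of the lemma).

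For the second equation I would use the conformal transformation of scalar curvature in real dimension two: $S_{\omega'} = e^{-2u}\bigl(S_\omega - 2\Delta u\bigr)$ (with the sign convention matching the paper's Laplacian, which is the geometer's negative-definite one up to sign — one must be careful here, and I would double-check against the definition $\Delta_\omega f = 2i\Lambda_\omega\dbar\partial f$). Likewise $\Delta_{\omega'} = e^{-2u}\Delta_\omega$ on functions. Writing $G := |\phi|^2_{h'} - \tau = e^{2f}|\phi|^2 - \tau$, the second equation of \eqref{eq:gravvortexeq1} for $(\omega',h')$ becomes
\[
e^{-2u}\bigl(S_\omega - 2\Delta u\bigr) + \alpha\bigl(e^{-2u}\Delta_\omega G + \tau G\bigr) = c.
\]
Multiply by $e^{2u}$, use $S_\omega = 2\pi\chi(\Sigma)/\Vol_\omega(\Sigma)$, and rewrite $\tau G = \tau(e^{2f}|\phi|^2 - 2\tau f) + \tau(2\tau f - \tau) + \cdots$ — more precisely I would group the terms $\Delta u + \alpha\Delta_\omega(e^{2f}|\phi|^2) - 2\alpha\tau\Delta f$ and then use the first equation of \eqref{eq:KWtype} to substitute for $\Delta f$, converting the leftover $\alpha\tau e^{2u}G$ term and the constant $S_\omega$ into the combination $c(1-e^{2u})$ via the explicit value \eqref{eq:constantc} of $c$. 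The bookkeeping is the only real content: one checks that all $\phi$-dependent and $\tau$-dependent terms collapse using \eqref{eq:constantc} and the already-established first equation, leaving precisely $\Delta(u + \alpha e^{2f}|\phi|^2 - 2\alpha\tau f) + c(1-e^{2u}) = 0$.

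The converse direction is immediate: the same identities are reversible, so a solution $(u,f)$ of \eqref{eq:KWtype} yields a solution $(\omega',h')$ of \eqref{eq:gravvortexeq1}. The main obstacle is purely a matter of sign and normalization conventions — getting the conformal law for scalar curvature consistent with the paper's $\Delta_\omega = 2i\Lambda_\omega\dbar\partial$ convention, and verifying that the topological constant $c$ from \eqref{eq:constantc} is exactly what is needed to absorb the residual curvature and $\tau^2$ terms. No analysis is involved; it is an algebraic identity between elliptic operators after conformal change.
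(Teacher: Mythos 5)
Your proposal follows exactly the paper's route: apply the conformal transformation laws for $\Lambda F$ and the scalar curvature, multiply by $e^{2u}$, use the first (transformed) vortex equation to eliminate the zeroth-order term $\alpha\tau e^{2u}(|\phi|^2_{h'}-\tau)$, and absorb the constants $S_\omega = 2\pi\chi(\Sigma)/\Vol_\omega(\Sigma)$ and $i\Lambda_\omega F_h = 2\pi c_1(L)/\Vol_\omega(\Sigma)$ into $c$ via \eqref{eq:constantc}. The structure is right and the bookkeeping you describe is the whole content, just as in the paper.

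The one thing to fix is the signs in your intermediate identities, which as written are inconsistent with the conclusion you claim. With the paper's convention $\Delta_\omega f = 2i\Lambda_\omega\dbar\partial f$ and your (correct) formula $F_{h'} = F_h + 2\dbar\partial f$, one gets
$e^{2u}\, i\Lambda_{\omega'}F_{h'} = i\Lambda_\omega F_h + \Delta f$, with a plus sign, not the minus sign you wrote; otherwise the first line of \eqref{eq:KWtype} would come out with $-\Delta f$. Similarly, the conformal law consistent with the paper's normalization of $S$ and $\Delta$ is $e^{2u}S_{\omega'} = S_\omega + \Delta u$ rather than $S_\omega - 2\Delta u$. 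You flagged this convention issue yourself; once these two identities are corrected, the computation closes exactly as you outline and reproduces the paper's proof.
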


\begin{proof}
Using the identities
$$
e^{2u}S_{\omega'} = S_\omega + \Delta u, \qquad e^{2u}i\Lambda_{\omega'}F_{h'} = i\Lambda_\omega F_h + \Delta f,
$$
we obtain the system
\begin{equation}
\begin{split}
i\Lambda_\omega F_h + \Delta f + \frac{1}{2}e^{2u}(e^{2f}|\phi|^2-\tau) & = 0,\\
S_g + \Delta u + \alpha\Delta(e^{2f}|\phi|^2 -\tau) - 2 \alpha \tau (i\Lambda_\omega F_h + \Delta f) - ce^{2u} & = 0,
\end{split}
\end{equation}
and the result now follows from the choice of $\omega$ and $h$.
\end{proof}

To start our deformation argument, we note that for $\alpha = 0$ the constant
function $u_0 = 0$ provides a solution of the second equation in
\eqref{eq:KWtype}. With this ansatz, following Bradlow \cite{Brad}, we note
that by the Kazdan--Warner Theorem \cite{KaWa} 
there exists a unique smooth solution  $f_0 \in C^\infty(\Sigma)$ of the first equation 
$$
\Delta f_0 + \frac{1}{2}(e^{2f_0}|\phi|^2-\tau) = - 2\pi c_1(L)/\Vol(X)
$$ 
if and only the inequality \eqref{eq:ineq} holds. In this case, the metric
$h' = e^{2f_0}h$ provides the unique solution of the Abelian vortex equations
\eqref{eq:vortexeq}. Hence, assuming \eqref{eq:ineq}, 
the linearization of the equations \eqref{eq:KWtype} 
with $\alpha = 0$ at $(0,f_0)$ is given by
\begin{equation}\label{eq:KWtypelin}
\begin{split}
\Delta \dot f + \dot u (e^{2f_0}|\phi|^2-\tau) + \dot f e^{2f_0}|\phi|^2 & = 0,\\
\Delta \dot u 
 - \frac{4\pi\chi(\Sigma)}{\Vol_\omega(\Sigma)}\dot u & = 0,
\end{split}
\end{equation} 
for $\dot u, \dot f \in C^\infty(\Sigma)$.

\begin{lemma}\label{lem:injectiveKW}
If the genus of $\Sigma$ is strictly bigger than one, the only solution of \eqref{eq:KWtypelin} is the trivial solution.
\end{lemma}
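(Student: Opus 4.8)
The plan is to exploit the \emph{triangular} structure of the linearized system \eqref{eq:KWtypelin}: the second equation involves $\dot u$ alone, so it determines $\dot u$ first, after which the first equation becomes an equation for $\dot f$ only. The whole argument reduces to integration by parts on the compact surface, and the hypothesis $g(\Sigma) > 1$ enters exactly through the sign of $\chi(\Sigma)$.

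I would begin by recording that, with the sign convention $\Delta = \Delta_\omega = 2i\Lambda_\omega\dbar\partial$ of the paper, for real $v \in C^\infty(\Sigma)$ one has
\[
\int_\Sigma (\Delta v)\, v \,\vol_\omega \;=\; 2\int_\Sigma i\, \partial v \wedge \dbar v \;\geq\; 0,
\]
with equality if and only if $v$ is constant; hence $\Delta$ is a non-negative self-adjoint operator on $L^2(\Sigma,\vol_\omega)$ with kernel the constant functions. Since $g(\Sigma) > 1$, the Euler characteristic $\chi(\Sigma)$ is strictly negative, so the coefficient $4\pi\chi(\Sigma)/\Vol_\omega(\Sigma)$ appearing in the second equation of \eqref{eq:KWtypelin} is strictly negative.

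Next, I would pair the second equation $\Delta\dot u = (4\pi\chi(\Sigma)/\Vol_\omega(\Sigma))\,\dot u$ with $\dot u$ and integrate, obtaining
\[
0 \;\leq\; \int_\Sigma (\Delta \dot u)\, \dot u \,\vol_\omega \;=\; \frac{4\pi\chi(\Sigma)}{\Vol_\omega(\Sigma)} \int_\Sigma \dot u^2 \,\vol_\omega \;\leq\; 0,
\]
which forces $\int_\Sigma \dot u^2 \,\vol_\omega = 0$, i.e.\ $\dot u \equiv 0$. Substituting $\dot u = 0$ into the first equation of \eqref{eq:KWtypelin} leaves $\Delta\dot f + e^{2f_0}|\phi|^2 \dot f = 0$; pairing this with $\dot f$ and integrating yields
\[
\int_\Sigma (\Delta \dot f)\, \dot f \,\vol_\omega \;+\; \int_\Sigma e^{2f_0}|\phi|^2 \dot f^2 \,\vol_\omega \;=\; 0,
\]
a sum of two non-negative terms. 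Both must vanish: the first forces $\dot f$ to be constant, and then the second forces $\dot f \equiv 0$, because $\int_\Sigma e^{2f_0}|\phi|^2 \,\vol_\omega > 0$ since $\phi$ is not identically zero.

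I do not expect a serious obstacle: the argument is elementary once the triangular structure is observed. The only point deserving attention is the necessity of $g(\Sigma) > 1$. When $g(\Sigma) = 1$ one has $\chi(\Sigma) = 0$, the second equation degenerates to $\Delta\dot u = 0$, and the non-zero constant functions persist in its kernel; the corresponding $\dot f$ solving the first equation then yields a genuine non-trivial solution of \eqref{eq:KWtypelin}. This obstruction is precisely what prevents the Implicit Function Theorem argument in genus one, and is handled instead by the symplectic methods announced in the introduction.
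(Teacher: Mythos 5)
Your proposal is correct and follows essentially the same route as the paper: use $\chi(\Sigma)<0$ together with the non-negativity of $\Delta$ to kill $\dot u$, then integrate the remaining equation against $\dot f$ to force $\dot f$ constant and hence zero since $\phi\not\equiv 0$. The only difference is that you spell out the integration-by-parts identity for the eigenvalue equation that the paper compresses into the phrase ``$\Delta$ is a positive operator.''
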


\begin{proof}
By assumption, we have $\chi(\Sigma) < 0$ and hence
$$
\Delta \dot u = \frac{4\pi\chi(\Sigma)}{\Vol_\omega(\Sigma)}\dot u
$$
only admits the trivial solution, since $\Delta$ is a positive operator. Hence, the first equation in \eqref{eq:KWtypelin} reduces to
$$
\Delta \dot f + \dot f e^{2f_0}|\phi|^2 = 0,
$$
and again by positivity of $\Delta$ we obtain
$$
0 \geq - \int_\Sigma |\dot f|^2 e^{2f_0}|\phi|^2 = \int_\Sigma \dot f  \Delta \dot f \omega = \int_\Sigma |d\dot f|^2\omega \geq 0,
$$
and consequently $\dot f$ has to be constant. Since $\phi$ is non-identically zero, we necessarily have $\dot f = 0$.
\end{proof}

To prove Theorem \ref{th:existencemmap} when $g(\Sigma)>1$,
consider the operator
$$
\mathbf{L} \colon C^\infty(\Sigma) \oplus C^\infty(\Sigma) \oplus \RR \to C^\infty(\Sigma) \oplus C^\infty(\Sigma)
$$
defined by $\mathbf{L} = \mathbf{L}_1 \oplus \mathbf{L}_2$ with 
\begin{align*}
\mathbf{L}_1(u,f,\alpha) & = \Delta f + \frac{1}{2}e^{2u}(e^{2f}|\phi|^2-\tau) + 2\pi c_1(L)/\Vol(X),\\
\mathbf{L}_2(u,f,\alpha) & = \Delta \(u + \alpha e^{2f}|\phi|^2 - 2 \alpha \tau f\) + c(1-e^{2u}),
\end{align*}
where $c = c(\alpha)$ is given by \eqref{eq:constantc}. By Lemma \ref{lem:injectiveKW}, the linearization $\delta \mathbf{L}_0$ of $\mathbf{L}$ with respect to $u$ and $f$ at $(0,f_0,0)$, given by \eqref{eq:KWtypelin}, is injective. We check now that it is an isomorphism. For this, we note that $\Delta - \frac{4\pi\chi(\Sigma)}{\Vol_\omega(\Sigma)}$ is a self-adjoint and positive operator, and hence it is invertible. To prove surjectivity of $\delta \mathbf{L}_0$, it is therefore enough to show that the self-adjoint operator  $\Delta +  e^{2f_0}|\phi|^2$ is invertible, which again follows by positivity.  The result follows now, by the Implicit Function Theorem on Banach spaces, by taking suitable Sobolev completions of $C^\infty(\Sigma)$. Regularity of the solutions follows easily from a boot-strapping argument applied to \eqref{eq:KWtype} and regularity of the Laplacian.

\begin{remark}
In the case $g(\Sigma) \leq 1$, there exist non-trivial solutions of 
\eqref{eq:KWtypelin} that obstruct the deformation argument. These 
solutions are given by inverting the self-adjoint positive operator 
$\Delta + e^{2f_0}|\phi|^2$ at the function $-\dot u (e^{2f_0}|\phi|^2-\tau)$, 
for any non-zero eigenfunction $\dot u$ of the Laplacian $\Delta$ with eigenvalue 
\begin{equation}\label{eq:firsteigenvalue}
\frac{4\pi\chi(\Sigma)}{\Vol_\omega(\Sigma)}.
\end{equation}
For $g(\Sigma) = 1$, the space of solutions is parametrized by $\RR$. 
For $g(\Sigma) = 0$, by a theorem of Lichnerowicz \cite{Lich} (see also 
\cite[Proposition 2.6.2]{Gauduchon}) the constant \eqref{eq:firsteigenvalue} 
is precisely the first non-zero eigenvalue of the Laplacian, and the 
corresponding eigenfunctions are symplectic potentials for Hamiltonian 
Killing vector fields on the round sphere of volume $\Vol_\omega(\Sigma)$. 
A geometric interpretation of this fact will be provided by 
the moment map framework in the next section.
\end{remark}

\subsection{Existence in the weak coupling limit revisited}\label{sec:weakbis}

From the dimensional reduction argument and the moment map interpretation of
the K\"ahler--Yang--Mills equations \cite{AGG,GF} (see Section
\ref{subsec:CKYM}), it follows that the gravitating vortex equations
\eqref{eq:gravvortexeq} also have a moment map interpretation. We use now this
fact to give a proof (covering also the case $g(\Sigma)=1$) of 
Theorem \ref{th:existencemmap}.

Let $S$ be compact, connected, oriented, smooth surface endowed with a symplectic form $\omega$. Let $(L,h)$ be a hermitian line bundle over $S$. As in Section \ref{subsec:CKYM}, consider the \emph{extended gauge group} $\cX$ of $(L,h)$ and $(S,\omega)$, given by an extension
\begin{equation}
\label{eq:coupling-term-moment-map-S}
  1\to \cG \lra{} \cX \lra{\pr} \cH \to 1,
\end{equation}
of the group $\cH$ of Hamiltonian symplectomorphisms of $(S,\omega)$ by the unitary gauge group $\cG$ of $(L,h)$. Consider the space of integrable triples 
$$
\mathcal{T} \subset \cJ \times \cA \times \Omega^0(L)
$$
defined by the equation
$$
\mathcal{T} = \{(J,A,\phi): \quad \dbar_A \phi = 0\}.
$$
Let $(J,A,\phi) \in \cT$ and denote by $\Sigma$ the Riemann surface determined by $J$.  We adapt the method in \cite[Sect. 4]{AGG}, considering deformations of $(\omega,h)$ which preserve the volume $\Vol(\Sigma)$ of $\omega$ (i.e. the K\"ahler class) in the fixed Riemann surface $\Sigma$. Consider the space $C^\infty_0(\Sigma)$ of smooth functions $\varphi$ on $\Sigma$ such that $\int_\Sigma \varphi \omega = 0$, endowed with the $L^2$-pairing
$$
\langle \varphi_0,\varphi_1 \rangle = \int_\Sigma \varphi_0 \varphi_1 \omega.
$$ 
Let $U \subset C_0^\infty(\Sigma)$ be an open neighbourhood of $0$ (in $C^k$-norm, say) such that $\widetilde \omega = \omega + 2i \partial \dbar \varphi$ is positive for all $\varphi \in U$. Consider the operator
$$
\mathbf{B} \colon U \oplus C^\infty(\Sigma) \oplus \RR \to C^\infty(\Sigma) \oplus C_0^\infty(\Sigma)
$$
defined by $\mathbf{B} = \mathbf{B}_1 \oplus \mathbf{B}_2$ with 
\begin{align*}
\mathbf{B}_1(\varphi,f,\alpha) & = i\Lambda_{\widetilde \omega} F_{\widetilde h} + \frac{1}{2}|\phi|^2_{ \widetilde h} - \frac{\tau}{2},\\
\mathbf{B}_2(\varphi,f,\alpha) & = S_{\widetilde \omega} + \alpha \Delta_{\widetilde \omega} |\phi|^2_{ \widetilde h} - 2\alpha\tau i\Lambda_{\widetilde \omega} F_{\widetilde h} - c(\alpha),
\end{align*}
where $\widetilde h = e^{2f}h$ and $c = c(\alpha)$ is given by \eqref{eq:constantc}.  Let $\delta \mathbf{B}^0$ denote the linearization of $\mathbf{B}$ with respect to $(\varphi,f)$ at $(0,0,0)$. To give a formula for $\delta \mathbf{B}^0 = \delta \mathbf{B}^0_1 \oplus \delta \mathbf{B}^0_2$,  we introduce the notation
$$
P \colon C^\infty_0(\Sigma) \to T_J\cJ: \varphi \mapsto - L_{\eta_\varphi}J
$$
for the infinitesimal action of $\eta_\varphi \in \Lie \cH$, and $P^*$ for the adjoint with respect to the metric on $T_J\cJ$ and the $L^2$-pairing on $C_0^\infty(\Sigma)$.

\begin{lemma}
\begin{equation}\label{eq:deltaB}
\begin{split}
\delta \mathbf{B}^0_1(\varphi,f) & = \Delta_\omega f + d^*(\eta_\varphi \lrcorner iF_h) +  f|\phi|_h^2 + J\eta_\varphi \lrcorner d(i\Lambda_{\omega} F_{h} + \frac{1}{2}|\phi|^2_{h})),\\
\delta \mathbf{B}^0_2(\varphi,f) & = - P^*P \varphi - (dS_\omega,d\varphi)_\omega.
\end{split}
\end{equation}
\end{lemma}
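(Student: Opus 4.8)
The plan is to linearize $\mathbf{B}=\mathbf{B}_1\oplus\mathbf{B}_2$ by treating the two deformation parameters separately: the conformal factor $f$ of the hermitian metric $\widetilde h=e^{2f}h$, and the K\"ahler potential $\varphi$ of $\widetilde\omega=\omega+2i\partial\dbar\varphi$, with the complex structure $J$ of $\Sigma$ held fixed. The base point $(0,0,0)$ has $\alpha=0$, so every term of $\mathbf{B}_2$ except $S_{\widetilde\omega}$ carries a factor $\alpha$ and drops out of $\delta\mathbf{B}^0_2$; this is the observation that makes the computation manageable.

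First I would do the $f$-direction, which is elementary. Under $h\mapsto e^{2f}h$ the Chern curvature changes by $F_{\widetilde h}=F_h+2\dbar\partial f$, so $i\Lambda_\omega F_{\widetilde h}$ contributes $\Delta_\omega f$ to the linearization of $\mathbf{B}_1$, while $|\phi|^2_{\widetilde h}=e^{2f}|\phi|_h^2$ contributes $f|\phi|_h^2$ to $\tfrac12|\phi|^2_{\widetilde h}$; together these give the terms $\Delta_\omega f+f|\phi|_h^2$ in $\delta\mathbf{B}^0_1$. In $\mathbf{B}_2$ the only $f$-independent piece surviving at $\alpha=0$ is $S_\omega$, so the $f$-direction contributes nothing to $\delta\mathbf{B}^0_2$, as stated.

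The substantive part is the $\varphi$-direction, and the natural route is through the moment-map interpretation, adapting the computations of \cite[Sect.~4]{AGG} (see Section~\ref{subsec:CKYM}). Since $\Lambda$ is symplectic and the hermitian scalar curvature is natural under diffeomorphisms, the deformation $\omega\mapsto\omega+2i\partial\dbar\varphi$ at fixed $J$ can be traded, to first order, by a Moser argument, for the infinitesimal action of the Hamiltonian vector field $\eta_\varphi\in\Lie\cH$ on the integrable triple $(J,A,\phi)$, up to an overall transport by the generating flow. Inserting this into $\mathbf{B}_1=i\Lambda_{\widetilde\omega}F_{\widetilde h}+\tfrac12|\phi|^2_{\widetilde h}-\tfrac\tau2$, using Cartan's formula ($iF_h$ is closed) together with the K\"ahler identities on the surface to rewrite $\Lambda_\omega d(\eta_\varphi\lrcorner iF_h)$ through the codifferential $d^*$, produces the term $d^*(\eta_\varphi\lrcorner iF_h)$ together with the transport term $J\eta_\varphi\lrcorner d(i\Lambda_\omega F_h+\tfrac12|\phi|^2_h)$, i.e. the Hamiltonian derivative of the vortex moment-map density (that term is a multiple of $d$ of the density, hence disappears when $\delta\mathbf{B}^0$ is later evaluated at an actual vortex). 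For $\mathbf{B}_2$, once the $\alpha$-terms are gone, what remains is the linearization of $\omega\mapsto S_\omega$ within its fixed K\"ahler class; this is the standard Lichnerowicz-type operator, which in the Fujiki--Donaldson moment-map picture recalled in Section~\ref{subsec:CKYM} is exactly $-P^*P\varphi-(dS_\omega,d\varphi)_\omega$, with $P\varphi=-L_{\eta_\varphi}J$ the infinitesimal $\cH$-action. Adding the $f$- and $\varphi$-contributions yields \eqref{eq:deltaB}.

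I expect the main obstacle to be precisely the $\varphi$-linearization: one must keep careful track of signs and of the transport terms generated by the Moser diffeomorphism, and check that the naturality used for $\Lambda$, for $S_\omega$ and for the pointwise norm $|\phi|_h^2$ conspires to give exactly the displayed combination rather than some superficially different expression. A purely local computation — expanding $i\Lambda_{\widetilde\omega}F_{\widetilde h}$, $S_{\widetilde\omega}$ and $\Delta_{\widetilde\omega}$ via $\widetilde\omega/\omega=1-\Delta_\omega\varphi$ in holomorphic coordinates — provides a cross-check but obscures the moment-map structure, which is the whole point of this subsection and is what drives the symplectic proof of Theorem~\ref{th:existencemmap} in the remaining case $g(\Sigma)=1$; the identification of the scalar-curvature linearization with $-P^*P-(dS_\omega,d\,\cdot\,)_\omega$ is routine given the framework of Section~\ref{subsec:CKYM}.
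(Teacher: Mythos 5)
Your proposal is correct and follows essentially the same route as the paper, whose entire proof is the single remark that the computation is analogous to \cite[Proposition~4.7]{AGG}: you linearize the $f$-direction directly via $F_{\widetilde h}=F_h+2\dbar\partial f$ and $|\phi|^2_{\widetilde h}=e^{2f}|\phi|^2_h$, and handle the $\varphi$-direction through the moment-map/Hamiltonian-action picture of Section~\ref{subsec:CKYM}, with the $\alpha$-terms of $\mathbf{B}_2$ dropping out at $\alpha=0$ so that only the Lichnerowicz-type linearization of the scalar curvature survives. Your observations that the transport term $J\eta_\varphi\lrcorner d(i\Lambda_\omega F_h+\tfrac12|\phi|^2_h)$ vanishes at an actual vortex and that $\delta\mathbf{B}^0_2$ is the Fujiki--Donaldson linearization $-P^*P\varphi-(dS_\omega,d\varphi)_\omega$ are exactly the points the paper relies on in Lemma~\ref{lem:deltaBcscK}.
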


The proof is analogue to the proof of \cite[Proposition 4.7]{AGG}. Note that $P^*P$ is, up to a multiplicative constant factor, the Lichnerowicz operator of the compact K\"ahler manifold $(\Sigma,\omega)$ (see e.g. \cite{LS1})
$$
- P^*P \varphi = \Delta_\omega^2 \varphi - S_\omega \Delta_\omega \varphi + (dS_\omega,d\varphi)_\omega.
$$
This is an elliptic self-adjoint semipositive differential operator of order $4$, whose kernel is the set of functions $\varphi$ such that $\eta_\varphi$ is a Killing Hamiltonian vector field, and which may be interpreted as the linearization of the constant scalar curvature K\"ahler equation at $\omega$.

\begin{lemma}\label{lem:deltaBcscK}
Assume that $\omega$ has constant scalar curvature and $\phi \neq 0$. Then, the kernel of $\delta \mathbf{B}^0$ can be identified with the space of Hamiltonian Killing vector fields on $(\Sigma,\omega)$.
\end{lemma}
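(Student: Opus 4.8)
The plan is to analyze the system $\delta\mathbf{B}^0(\varphi,f) = 0$ using the explicit formulas in \eqref{eq:deltaB}, exploiting the hypothesis that $\omega$ has constant scalar curvature. The key observation is that under this hypothesis the second component $\delta\mathbf{B}^0_2$ simplifies dramatically: since $dS_\omega = 0$, we get $\delta\mathbf{B}^0_2(\varphi,f) = -P^*P\varphi$. Thus $\delta\mathbf{B}^0_2(\varphi,f) = 0$ forces $\varphi \in \ker P^*P$, which by the discussion preceding the lemma is precisely the space of $\varphi$ such that $\eta_\varphi$ is a Hamiltonian Killing vector field on $(\Sigma,\omega)$. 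So the second equation already pins down $\varphi$ to lie in the expected space; what remains is to show that for each such $\varphi$ there is a \emph{unique} $f$ making the first equation hold, and that this $f$ is automatically zero.

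First I would treat the first component. With $\varphi \in \ker P^*P$ fixed, the equation $\delta\mathbf{B}^0_1(\varphi,f) = 0$ reads
\begin{equation*}
\Delta_\omega f + f|\phi|_h^2 = -\,d^*(\eta_\varphi\lrcorner iF_h) - J\eta_\varphi\lrcorner d\bigl(i\Lambda_\omega F_h + \tfrac{1}{2}|\phi|_h^2\bigr).
\end{equation*}
Here I would use that $h$ is the hermite--Einstein metric, so $i\Lambda_\omega F_h$ is the constant $2\pi c_1(L)/\Vol_\omega(\Sigma)$, and that $(\omega,h)$ solves the vortex equation after the conformal change implicit in the setup — i.e. $i\Lambda_\omega F_h + \tfrac12|\phi|_h^2 - \tfrac\tau2$ is the relevant quantity; in the linearization at $(0,0,0)$ one must be careful about which base metric is used, but the essential point is that the operator $\Delta_\omega + |\phi|_h^2$ on the left-hand side is self-adjoint and strictly positive (since $\phi\neq 0$), hence invertible on $C^\infty(\Sigma)$. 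Therefore for each $\varphi\in\ker P^*P$ there is exactly one solution $f = f(\varphi)$, depending linearly on $\varphi$.

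Next I would show $f(\varphi) = 0$, so that the kernel of $\delta\mathbf{B}^0$ is exactly $\{(\varphi,0): \varphi\in\ker P^*P\}\cong\{\text{Hamiltonian Killing fields}\}$. The point is that when $\eta_\varphi$ is a \emph{holomorphic} Killing field — and Hamiltonian Killing fields on a Kähler manifold are real parts of holomorphic vector fields — the Lie derivative $L_{\eta_\varphi}$ annihilates all the natural geometric data: it preserves $\omega$, $J$, the metric $h$ up to gauge (since $\eta_\varphi$ lifts to an infinitesimal automorphism of $(L,h)$), hence kills $F_h$ and $|\phi|_h^2$ appropriately. Concretely, $d^*(\eta_\varphi\lrcorner iF_h) = -i\Lambda_\omega d(\eta_\varphi\lrcorner F_h)$ up to signs, and using Cartan's formula $L_{\eta_\varphi}F_h = d(\eta_\varphi\lrcorner F_h) + \eta_\varphi\lrcorner dF_h = d(\eta_\varphi\lrcorner F_h)$ since $F_h$ is closed; as $\eta_\varphi$ is Killing and holomorphic this Lie derivative vanishes, forcing $d(\eta_\varphi\lrcorner F_h) = 0$, and similarly the $J\eta_\varphi\lrcorner d(\cdots)$ term vanishes because the quantity in parentheses is constant (vortex equation plus hermite--Einstein). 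So the right-hand side of the equation for $f$ is identically zero, whence $f = 0$ by invertibility.

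\textbf{The main obstacle} I anticipate is the bookkeeping in the second paragraph: verifying that the inhomogeneous term on the right-hand side really does vanish requires carefully invoking the fact that a \emph{Hamiltonian Killing} vector field on $(\Sigma,\omega)$ lifts canonically to an infinitesimal symmetry of the whole triple $(J,A,\phi)$ — i.e. that its horizontal lift into $\Lie\cX$ (as in \eqref{eq:Lie-algebras-ses}) fixes $A$ and $\phi$ up to the gauge group action — and matching signs between the $d^*$, $\Lambda_\omega d$, and Lie-derivative expressions. This is essentially the statement that Killing fields are infinitesimal automorphisms of the vortex data, parallel to the computation in \cite[Proposition 4.7]{AGG} and the cscK story; once that identification is made, the positivity of $\Delta_\omega + |\phi|_h^2$ does the rest and no further analysis is needed.
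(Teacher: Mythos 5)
Your first paragraph, together with the solvability observation at the start of your second, is precisely the paper's (very short) proof: with $S_\omega$ constant the second component reduces to $-P^*P\varphi$, forcing $\varphi$ to be the potential of a Hamiltonian Killing field, and the strict positivity of $\Delta_\omega+|\phi|_h^2$ (this is where $\phi\neq 0$ enters) shows that the first equation then determines $f$ uniquely from $\varphi$. That already gives the identification asserted in the lemma: the kernel is the graph $\{(\varphi,f(\varphi)):\varphi\in\ker P^*P\}$ of a linear map, hence isomorphic to the space of Hamiltonian Killing fields via $(\varphi,f)\mapsto\eta_\varphi$. You should stop there.

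The remainder of your argument --- that $f(\varphi)=0$ because a Hamiltonian Killing field ``kills $F_h$ and $|\phi|_h^2$'' --- is false. A Killing field of $(\Sigma,\omega)$ preserves $\omega$ and $J$, but it has no reason to preserve the pair $(h,\phi)$: on $\PP^1$ a rotation about an axis moves the zero divisor of a generic section $\phi$, so $L_{\eta_\varphi}|\phi|_h^2\neq 0$. Concretely, at the vortex solution one has $iF_h=\bigl(\tfrac{\tau}{2}-\tfrac{1}{2}|\phi|_h^2\bigr)\,\omega$, so $\eta_\varphi\lrcorner\, iF_h=\bigl(\tfrac{\tau}{2}-\tfrac{1}{2}|\phi|_h^2\bigr)\,d\varphi$ and $d^*(\eta_\varphi\lrcorner\, iF_h)$ is generically nonzero; only the last term $J\eta_\varphi\lrcorner\, d\bigl(i\Lambda_\omega F_h+\tfrac{1}{2}|\phi|_h^2\bigr)$ vanishes, because the vortex equation makes that quantity the constant $\tau/2$. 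Hence $f(\varphi)\neq 0$ in general and the kernel is not $\{(\varphi,0)\}$. This is consistent with the Remark closing Section 4.1, where the obstructing kernel elements in the Kazdan--Warner picture have $\dot f=(\Delta+e^{2f_0}|\phi|^2)^{-1}\bigl(-\dot u\,(e^{2f_0}|\phi|^2-\tau)\bigr)$, which is nonzero. Since the lemma only claims an identification, your correct first half already proves it; simply delete the claim $f(\varphi)=0$ and the Lie-derivative justification.
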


\begin{proof}
By \eqref{eq:deltaB}, $\delta \mathbf{B}^0(\varphi,f) = 0$ implies that
$\varphi$ is the potential of a Hamiltonian Killing vector field. The claim
follows by positivity of the operator $\Delta_\omega + |\phi|_h^2$, as argued
in Section \ref{sec:KazdanWarner}.
\end{proof}

\begin{proof} [Proof of Theorem \ref{th:existencemmap}]
By hypothesis, we can assume that $\omega$ has constant scalar curvature and that $h$ is a solution of the vortex equation \eqref{eq:vortexeq}. By Lemma \ref{lem:deltaBcscK}, the linearization $\delta B^0$ is injective, since $g(\Sigma) \geqslant 1$ implies that there are no non-zero Hamiltonian Killing vector fields on $(\Sigma,\omega)$. Thus, positivity of $\Delta_\omega +  |\phi|_h^2$ implies that $\delta B^0$ is an isomorphism. The result follows by the Implicit Function Theorem on Banach spaces, by taking suitable Sobolev completions of $C^\infty(\Sigma)$. Regularity of the solutions follows easily from a boot-strapping argument, using regularity for the Laplacian and the constant scalar curvature operator (cf. \cite[Lemma 4.3]{AGG}).
\end{proof}





We note that the previous result cannot be obtained using the methods of \cite{AGG}, due to the existence of non-trivial Hamiltonian Killing  vector fields on $\Sigma \times \PP^1$.

\begin{remark}
For $g(\Sigma) \geqslant 1$, adapting the argument in \cite[Section 4.4]{AGG} one can easily prove that the existence of solutions of the gravitating vortex equations is an open condition for $(J,A,\phi) \in \cT$, and also for the parameters $\alpha$, $\tau$ and $\Vol(\Sigma)$.
\end{remark}

\section{Yang's theorem and Geometric Invariant Theory}\label{sec:cosmicstring}

We fix a constant $\alpha> 0$ and a symmetry breaking parameter $\tau > 0$. In
this section we focus on a particular case of the gravitating vortex
equations, given by $c_1(L) > 0$ and the condition $c=0$ in \eqref{eq:gravvortexeq1}, that is,
\begin{equation}\label{eq:cosmicstrings}
\begin{split}
i\Lambda_\omega F_h + \frac{1}{2}(|\phi|_h^2-\tau) & = 0,\\
S_\omega + \alpha(\Delta_\omega + \tau) (|\phi|_h^2 -\tau) & = 0.
\end{split}
\end{equation}

Following Yang \cite{Yang1992,Yang1994CMP}, we shall refer to \eqref{eq:cosmicstrings} as the \emph{Einstein--Bogomol'nyi equations}. Note that $c = 0$ combined with $c_1(L) > 0$ constrains the topology of the Riemann surface to be $\Sigma = \PP^1$, as this condition is equivalent to
$$
\chi(\Sigma) = 2\alpha\tau c_1(L),
$$
which is positive for non-zero $\phi$ (see Theorem \ref{th:B-GP}).

\subsection{Existence of solutions and Yang's Theorem}


The particular features of the Einstein--Bogomol'nyi 
equations \eqref{eq:cosmicstrings} are better observed using the
Kazdan--Warner  type formulation of the equations \eqref{eq:KWtype}, 
as in this case the system reduces to a single partial differential equation \cite{Yang}
\begin{equation}\label{eq:single}
\begin{split}
\Delta f + \frac{1}{2}e^{2u}(e^{2f}|\phi|^2-\tau) & = - N,
\end{split}
\end{equation}
for a function $f \in C^\infty(\PP^1)$, where 
$$
u = 2 \alpha \tau f - \alpha e^{2f}|\phi|^2 + c'.
$$ 
Here, $\Delta$ is the Laplacian of the Fubini--Study metric on $\PP^1$, normalized so that $\int_{\PP^1}\omega_{FS} = 2\pi$, $|\phi|$ is the norm with respect to the Fubini--Study metric on $L = \mathcal{O}_{\PP^1}(N)$ and $c'$ is a real constant that can be chosen at will. 

The Liouville type equation \eqref{eq:single} on $\PP^1$ was studied by Yang \cite{Yang,Yang3}, who proved the following existence result. Let 
$$
D = \sum_j n_j p_j
$$ 
be the effective divisor on $\PP^1$ corresponding to a pair $(L,\phi)$, with $N = \sum_j n_j = c_1(L)$.

\begin{theorem}[Yang's Existence Theorem]\label{th:Yang}
Assume that \eqref{eq:ineq} holds. Then, there exists a solution of the Einstein--Bogomol'nyi equations \eqref{eq:cosmicstrings} on $(\PP^1,L,\phi)$ if one of the following conditions hold
\begin{enumerate}

\item[\textup{(1)}] $D = \frac{N}{2}p_1 + \frac{N}{2}p_2$, where $p_1 \neq p_2$ and $N$ is even. In this case the solution admits an $S^1$-symmetry. 

\item[\textup{(2)}] $n_j < \frac{N}{2}$ for all $j$.

\end{enumerate}
\end{theorem}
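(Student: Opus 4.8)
The plan is to reduce the Einstein--Bogomol'nyi system \eqref{eq:cosmicstrings} to the single scalar equation \eqref{eq:single} on $\PP^1$ and then establish existence of solutions of that equation by a variational/PDE argument adapted to the position of the divisor $D$. First I would substitute the conformal ansatz $\omega' = e^{2u}\omega_{FS}$, $h' = e^{2f}h_{FS}$ into \eqref{eq:cosmicstrings}; the second equation, with $c=0$, integrates (using \eqref{eq:constantc} and $\chi(\PP^1)=2$) to force $u = 2\alpha\tau f - \alpha e^{2f}|\phi|^2 + c'$ with $c'$ a free additive constant, exactly as recorded above, so the whole system collapses to \eqref{eq:single} for the single unknown $f\in C^\infty(\PP^1)$, where $N = c_1(L)$ and $-N$ arises as $2\pi c_1(L)/\Vol(\PP^1)$ up to normalisation of $\omega_{FS}$. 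One must also check that \eqref{eq:ineq}, i.e. $N < \tau\Vol(\PP^1)/4\pi = \tau/2$ (with the chosen normalisation), is the integrability condition: integrating \eqref{eq:single} against the volume form and comparing with the range of the exponential nonlinearity pins down the necessary inequality, and the construction below will show it is also sufficient in cases (1) and (2).

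For case (1), the divisor $D = \tfrac{N}{2}p_1 + \tfrac{N}{2}p_2$ is fixed by a one-parameter subgroup of rotations of $\PP^1$ carrying $p_1$ to $0$ and $p_2$ to $\infty$; I would look for an $S^1$-invariant solution, so that $f$ depends only on the radial variable $t = \log|z|$ (equivalently $|\phi|^2$ and the metric coefficient depend only on $|z|$). Under this reduction \eqref{eq:single} becomes a second-order ODE on $\RR$ (or on a half-line after a further change of variables), with prescribed asymptotic behaviour at $t\to\pm\infty$ dictated by the orders $N/2$ of vanishing of $\phi$ at the two poles and the smoothness/closing-up conditions on $\PP^1$. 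The hard part here is to show this ODE boundary value problem has a solution: I would set it up as the Euler--Lagrange equation of a coercive, strictly convex functional on an appropriate weighted Sobolev space, the convexity coming from the $e^{2f}$ nonlinearity (note the coefficient of $e^{2f}|\phi|^2$ in \eqref{eq:single} has the favourable sign once the substitution for $u$ is made, since $\partial_f(2\alpha\tau f - \alpha e^{2f}|\phi|^2)$ controls the monotonicity), together with a barrier/sub- and super-solution argument to handle the singular weights near $p_1, p_2$; uniqueness of the $S^1$-invariant solution then follows from strict convexity, which is what ultimately yields the stated $S^1$-symmetry.

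For case (2), $n_j < N/2$ for all $j$, there is no symmetry to exploit and I would argue by a direct variational method on all of $\PP^1$. The functional is
\begin{equation*}
\mathcal{E}(f) = \int_{\PP^1}\Bigl(|df|^2 + \text{(lower-order terms built from } e^{2f}|\phi|^2,\ e^{2u}\text{)}\Bigr)\,\omega_{FS},
\end{equation*}
whose critical points solve \eqref{eq:single}; the role of the hypothesis $n_j < N/2$ is to guarantee, via a Moser--Trudinger / Aubin type inequality on $\PP^1$ applied to the singular conformal factor $e^{2u}$ (whose total mass and local masses $4\pi n_j$ at the $p_j$ are controlled precisely by $n_j < N/2 < \tau/2$), that $\mathcal{E}$ is coercive and bounded below, so that a minimiser exists by the direct method; elliptic regularity then upgrades the weak solution to a smooth one, and a maximum principle argument rules out the degenerate possibility $e^{2u}\equiv 0$. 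The main obstacle in both cases is controlling the conformal factor $e^{2u}$ near the zeros of $\phi$: because $u$ is itself an explicit but highly nonlinear function of $f$, the a priori estimates (uniform bounds on $f$ and on the induced metric, and the sharp constant in the relevant Moser--Trudinger inequality) require a delicate analysis precisely at the divisor points, and it is there that the threshold $n_j < N/2$ (and, globally, \eqref{eq:ineq}) enters as the decisive hypothesis. I would reference Yang's papers \cite{Yang,Yang3} for the detailed execution of these analytic estimates.
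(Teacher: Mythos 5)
Your overall route is the same as the paper's: Theorem \ref{th:Yang} is not proved there but imported from \cite{Yang,Yang3}, after exactly the reduction you describe \textemdash{} use $c=0$ to conclude from the second Kazdan--Warner equation that $u + \alpha e^{2f}|\phi|^2 - 2\alpha\tau f$ is harmonic, hence constant, substitute $u = 2\alpha\tau f - \alpha e^{2f}|\phi|^2 + c'$, and collapse the system to the single equation \eqref{eq:single}. Like you, the paper then defers to \cite{Yang3} for case (1) (an $S^1$-invariant ansatz reducing \eqref{eq:single} to an ODE) and to \cite{Yang} for case (2). The one substantive point the paper adds is that Yang proves case (1) only for a pair of \emph{antipodal} points; the general $p_1\neq p_2$ case is obtained by pulling back Yang's solution under an element of $\SL(2,\CC)$ carrying $p_1,p_2$ to antipodal points, which is legitimate because both unknowns $(\omega,h)$ are transported along with $\phi$. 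Your choice of coordinates with $p_1=0$, $p_2=\infty$ performs this reduction implicitly, but you should state that the background metric is being moved as well, i.e.\ that the system is covariant under $\Aut(\PP^1)$.

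Two analytic claims in your sketch would not survive if the citation to Yang were removed, and they misrepresent where the difficulty lies. First, the nonlinearity in \eqref{eq:single} is \emph{not} monotone in $f$ after the substitution for $u$: writing $v=e^{2f}|\phi|^2$, so that $\partial_f v = 2v$ and $\partial_f u = 2\alpha(\tau - v)$, one gets
$$
\partial_f\Bigl(\tfrac12 e^{2u}(v-\tau)\Bigr) \;=\; \tfrac12 e^{2u}\bigl(2v - 4\alpha (v-\tau)^2\bigr),
$$
which is strictly negative at the zeros of $\phi$ (where $v\equiv 0$) and strictly positive where $v$ is close to $\tau$. Hence neither the ODE functional in case (1) nor your functional $\mathcal{E}$ in case (2) is convex, and the ``uniqueness from strict convexity'' and ``coercivity'' arguments fail as stated. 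This is not a cosmetic issue: a convexity/coercivity mechanism would be insensitive to the distribution of the multiplicities $n_j$ and would therefore yield existence for every divisor, contradicting the fact (discussed after Proposition \ref{prop:GIT}) that unstable configurations such as $D=Np$ are expected to admit no solution. Second, the paper records that in \cite{Yang} the hypothesis $n_j<N/2$ enters through an a priori estimate for a weak solution produced by smoothing $\log|\phi|$ near the zeros of $\phi$ and passing to the limit, not through a sharp constant in a Moser--Trudinger inequality for a coercive direct minimization. Since you ultimately delegate the analysis to \cite{Yang,Yang3}, just as the paper does, your proposal is acceptable as a citation, but the sketched mechanisms should not be presented as the actual content of Yang's proof.
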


The conditions in part (2) of Theorem \ref{th:Yang} appear in \cite{Yang} in
an a priori estimate for a weak solution of the equation, obtained by
smoothing $\log |\phi|$ around the zeros of $\phi$ and taking a suitable
limit. The solution satisfying
part (1) of Theorem \ref{th:Yang} is constructed in \cite{Yang3} assuming an $S^1$-invariant ansatz for the solution and solving an ordinary differential equation. We note that this last part is stated in the original result for $D = \frac{N}{2}p + \frac{N}{2}\overline{p}$, with $p,\overline{p}$ antipodal points on $\PP^1$ and $S^1$ symmetry given by rotation along the $\{p,\overline{p}\}$ axis. The more general situation stated here reduces to Yang's result, after pull-back of the solution by an element in $\SL(2,\CC)$ which takes $p_1,p_2$ to a pair of antipodal points.



A remarkable fact is that the conditions that appear in Theorem \ref{th:Yang} 
have a natural 
meaning in the construction  of quotients of algebraic varieties  by complex reductive Lie groups, 
given by  Geometric Invariant Theory (GIT) \cite{MFK,ThomasGIT}. Consider the natural $\operatorname{SL}(2,\CC)$-action on $\PP^1$. There exists a unique lift of this action to $L$, which induces an action on $H^0(L)$. GIT tells us that, to construct the algebraic quotient of $\PP(H^0(L))$ by $\SL(2,\CC)$, we need to distinguish an invariant dense open subset 
$$
(\PP(H^0(L)))^{ss} \subset \PP(H^0(L))
$$ 
given by \emph{semi-stable orbits} on $H^0(L)$, in order to avoid non-Hausdorff phenomena caused by the non-properness of the group action. Note that $\PP(H^0(L))$ can be identified naturally with $S^N\PP^1$, the space of length $N$ divisors on $\PP^1$. Upon restriction to $(\PP(H^0(L)))^{ss}$ and identification of a semi-stable orbit with the unique closed orbit on its closure, the topological quotient
$$
\PP(H^0(L))/\!\!/\SL(2,\CC),
$$
inherits a structure of algebraic variety. Closed orbits on $H^0(L)$ are called \emph{polystable}, and they are in correspondence with points in the GIT quotient, while orbits excluded from the GIT quotient, those in $\PP(H^0(L)) \backslash (\PP(H^0(L)))^{ss}$, are called \emph{unstable}. Among polystable orbits, those with finite isotropy group are called \emph{stable}. By the \emph{Hilbert--Mumford criterion}, stable, polystable and unstable orbits are conveniently characterised in terms of a numerical criterion, related with conditions $(1)$ and $(2)$ of Theorem \ref{th:Yang}.


\begin{proposition}[\cite{MFK}]\label{prop:GIT}
The orbit of $\phi \in H^0(L)$ is 
\begin{enumerate}
\item[\textup{(1)}] strictly polystable if and only if $(1)$ in Theorem \ref{th:Yang} is satisfied,

\item[\textup{(2)}] stable if and only if $(2)$ in Theorem \ref{th:Yang} is satisfied,

\item[\textup{(3)}] unstable if and only if there exists $p_j \in D$ such that $n_j > \frac{N}{2}$. 
\end{enumerate}
\end{proposition}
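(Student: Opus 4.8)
The plan is to apply the Hilbert--Mumford numerical criterion to the $\SL(2,\CC)$-action on $\PP(H^0(L)) \cong S^N \PP^1$, and to make the resulting weights explicit in terms of the divisor $D = \sum_j n_j p_j$. First I would fix the standard maximal torus $T \subset \SL(2,\CC)$ of diagonal matrices and recall that a one-parameter subgroup $\lambda(t) = \operatorname{diag}(t, t^{-1})$ acts on $\PP^1$ with two fixed points, the attracting point $0$ and the repelling point $\infty$ (in an affine coordinate $z$, with $\lambda(t) \cdot z = t^{-2} z$). On $H^0(L) \cong H^0(\PP^1, \cO(N))$, which has basis the monomials $z^k$ for $0 \le k \le N$, the lift of $\lambda$ acts on $z^k$ with weight $2k - N$ up to an overall normalization; equivalently, if a divisor is written as $D = n_0\cdot[0] + n_\infty \cdot [\infty] + (\text{rest})$ with $\deg = N$, the Hilbert--Mumford weight $\mu(\phi, \lambda)$ of the corresponding point $[\phi] \in \PP(H^0(L))$ equals $n_\infty - n_0$ (after dividing by $2$), the minimal weight of a monomial actually occurring in $\phi$. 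I would verify this normalization carefully, since the final inequalities $n_j < N/2$ versus $n_j > N/2$ hinge on the factor of $N/2$ appearing as the ``center'' of the weight distribution.

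With the weight formula in hand, the three cases follow by unwinding the definitions of (semi/poly)stability. For unstable: $[\phi]$ is unstable iff there is a one-parameter subgroup $\lambda$ with $\mu(\phi,\lambda) < 0$; since every one-parameter subgroup is conjugate to some $\lambda(t) = \operatorname{diag}(t^a, t^{-a})$ with $a>0$, and conjugation amounts to moving the pair of fixed points to an arbitrary pair of points of $\PP^1$, this says exactly that there is a point $p \in \PP^1$ with multiplicity $n_p > N/2$ in $D$ (the point being the repelling fixed point, which must ``absorb'' more than half the divisor). This is statement (3). For stability one needs $\mu(\phi,\lambda) > 0$ for all nontrivial $\lambda$, i.e. $n_p < N/2$ for every point $p$, which is precisely condition (2) of Theorem~\ref{th:Yang}. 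For polystability one needs $\mu(\phi,\lambda) \ge 0$ for all $\lambda$, with strict polystability meaning the orbit is closed but not stable: this forces equality $n_p = N/2$ to be achieved, necessarily at two distinct points $p_1 \ne p_2$ (one can have at most two points of multiplicity $\ge N/2$ when the total degree is $N$, and if only one achieved it the orbit would degenerate), so $D = \tfrac{N}{2} p_1 + \tfrac{N}{2} p_2$ with $N$ even; conversely for such $D$ the stabilizer in $\SL(2,\CC)$ is the (positive-dimensional) torus fixing $\{p_1,p_2\}$ and the orbit is closed, so $[\phi]$ is strictly polystable. This matches condition (1).

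The main point requiring care is the claim that having $n_p = N/2$ at \emph{exactly} two points yields a closed orbit while having it at one point does not. I would argue this by the standard criterion that an orbit in affine space is closed iff it is not separated from a smaller-dimensional orbit by the action of a one-parameter subgroup: when $D = \tfrac{N}{2}p + (\text{rest})$ with all other multiplicities $< N/2$ but the ``rest'' is not a single point of multiplicity $N/2$, one can send the remaining mass to $p$ via a suitable $\lambda$ and obtain a strictly smaller divisor class in the closure, so the orbit is not closed (it is properly semistable, not polystable); whereas when $D = \tfrac N2 p_1 + \tfrac N2 p_2$ no one-parameter subgroup can further degenerate it, as any $\lambda$ either fixes both points or moves mass between them without reducing the orbit dimension. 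I expect the bookkeeping of which monomials occur (hence which weight is minimal) under an arbitrary, not necessarily diagonalized, $\phi$ to be the most error-prone part, and would double-check it against the $S^1$-symmetric solution of Theorem~\ref{th:Yang}(1), whose divisor $\tfrac N2 p_1 + \tfrac N2 p_2$ must come out strictly polystable for consistency.
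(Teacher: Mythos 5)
Your argument is correct: it is the standard Hilbert--Mumford computation for degree-$N$ binary forms (equivalently length-$N$ divisors on $\PP^1$), which is precisely the classical result the paper invokes without proof by citing \cite{MFK} and alluding to the Hilbert--Mumford criterion. The weight bookkeeping and the closed-orbit analysis for the strictly polystable case $D=\tfrac{N}{2}p_1+\tfrac{N}{2}p_2$ are handled correctly, so your proposal fills in exactly the argument the paper defers to the reference.
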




The existence of solutions of the Einstein--Bogomol'nyi equations on $(\PP^1,L,\phi)$ turns out to be equivalent to the polystability of $\phi$, as we prove in \cite{AGG2}, where we will also address the solution of a conjecture by Yisong Yang \cite{Yang3} (see also \cite[p. 437]{Yangbook}) about the non-existence of Nielsen--Olesen strings superimposed at a single point (corresponding to the unstable configuration $D = Np$ in Proposition \ref{prop:GIT}).



\begin{remark}
Theorem \ref{th:Yang} combined with an Implicit Function Theorem argument --- along the lines of the proof of \cite[Theorem 4.10]{AGG} --- imply an existence result for the gravitating vortex equations \eqref{eq:gravvortexeq1} on $\PP^1$ for $0 < |c| \ll 1$. For this, we note that if $\phi \in H^0(L)$ is stable then $\Aut (\PP^1,L,\phi)$, the group automorphisms of the total space of $L$ preserving $\phi$ (see \cite{AGG2}), is finite. The method in Section \ref{sec:weakbis} implies that the linearization of the Einstein--Bogomol'nyi equations is invertible, and hence Yang's solution with $c = 0$ can be deformed to solutions of the gravitating vortex equation with $\alpha$ nearby $\tau^{-1}N^{-1}$.
\end{remark}

\subsection{Deformation of symmetric gravitating vortices via GIT}\label{sec:qualitative}

In this section we prove an existence result for gravitating vortices around a symmetric solution of \eqref{eq:gravvortexeq1}. For this, we will exploit the relation of the gravitating vortex equations with the K\"ahler--Yang--Mills equations by dimensional reduction, obtained in Proposition \ref{prop:dimred}, and apply the main result in \cite{GFT} (which generalizes a theorem of Sz\'ekelyhidi in \cite{Szekelyhidi}). In particular, we give a GIT point of view of Yang's Existence Theorem \ref{th:Yang} around a symmetric solution.

Let $\phi$ be a holomorphic section of $L$ on $\PP^1$ which vanishes at exactly two points with multiplicity $N/2$. For a choice of homogeneous coordinates $[x_0,x_1]$, we can assume that $\phi$ vanishes at the two antipodal points $0, \infty$, and therefore it can be identified with the 
homogeneous polynomial
\begin{equation}\label{eq:phisymmetric}
\phi \cong t x_0^{N/2}x_1^{N/2}
\end{equation}
for a suitable $t \in \CC^*$. Without loss of generality we take $t =1$.

Consider the $\operatorname{SL}(2,\CC)$-action on $\PP^1$. There exists a unique lift of this action to $L$, which induces an action on $H^0(L)$. Upon identification of $H^0(L)$ with the space of degree $N$ homogeneous polynomials in $x_0,x_1$, the $\operatorname{SL}(2,\CC)$-action is simply given by pull-back, and the isotropy group of $\phi$ for this action is $\CC^*$, regarded as the subgroup of $\operatorname{SL}(2,\CC)$

\begin{equation}\label{eq:S1action}
\begin{split}
\( \begin{array}{cc}
\lambda & 0 \\
0 & \lambda^{-1}
\end{array} \) \subset \operatorname{SL}(2,\CC).
\end{split}
\end{equation}

Denote by $s_0,s_1,s_2$ the three standard generators of $\mathfrak{sl}(2,\CC)$, with relations
$$
[s_0,s_1] = s_2, \qquad [s_2,s_0] = 2s_0, \qquad [s_2,s_1] = -2s_1,
$$
where $s_0$ denotes the generator of the Abelian Lie algebra $\CC \subset \mathfrak{sl}(2,\CC)$ of \eqref{eq:S1action}. Consider the natural exact sequence
\begin{equation}\label{eq:exactseq2}
  \xymatrix{
    0 \ar[r] & \CC \ar[r] & \mathfrak{sl}(2,\CC) \ar[r]^{\rho} & H^0(L) \ar[r]^{\iota \qquad \qquad} & H^0(L)/\langle \rho(s_1),\rho(s_2) \rangle \ar[r] & 0
  }
\end{equation}
where $\rho$ denotes the infinitesimal action of $\operatorname{SL}(2,\CC)$ on $\phi \in H^0(L)$ and $\iota$ is the projection. Note that the $\CC^*$-action preserves $\langle \rho(s_1),\rho(s_2) \rangle$ and hence $\iota$ is $\CC^*$-equivariant. Using the natural basis of $H^0(L)$ given by the monomials of degree $N$, we can identify the representation $H^0(L)/\langle \rho(s_1),\rho(s_2) \rangle $ with the complement of $\langle \rho(s_1),\rho(s_2) \rangle$ on $H^0(L)$
$$
\langle \rho(s_1),\rho(s_2) \rangle^\perp =  \{\phi'' = \sum_{j=0}^N a_j x_0^j x_1^{N-j}: a_{\frac{N}{2}-1} = a_{\frac{N}{2}+1} = 0\},
$$
where we use that
$$
\rho(s_1) = \frac{N}{2}x_0^{\frac{N}{2}-1}x_1^{\frac{N}{2}+1}, \qquad \rho(s_2) = \frac{N}{2}x_0^{\frac{N}{2}+1}x_1^{\frac{N}{2}-1}.
$$
We are ready to state the main result of this section. Recall that an element $\phi' \in H^0(L)$ is polystable if and only if its $\operatorname{SL}(2,\CC)$-orbit on $H^0(L)$ is closed.

\begin{theorem}\label{th:triumph}
Assume that $\phi$ is as in \eqref{eq:phisymmetric}, and that it admits a solution of the gravitating vortex equations \eqref{eq:gravvortexeq1} on $\PP^1$ fixed by $S^1 \subset \CC^*$. Then, for any $\phi'' \in \langle \rho(s_1),\rho(s_2) \rangle^\perp$ close enough to $\phi$ the following holds:
\begin{enumerate}

\item[\textup{(1)}] $\phi''$ is polystable if and only if the $\CC^*$-orbit of $\phi''$ is closed,

\item[\textup{(2)}] if $\phi''$ is polystable then it admits a solution of the gravitating vortex equations.

\end{enumerate}
\end{theorem}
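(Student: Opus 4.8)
The plan is to deduce Theorem~\ref{th:triumph} from the dimensional reduction correspondence of Proposition~\ref{prop:dimred} together with the deformation theory for the K\"ahler--Yang--Mills equations developed in \cite{GFT}, which generalizes Sz\'ekelyhidi's criterion \cite{Szekelyhidi}. First I would translate the hypothesis: by Proposition~\ref{prop:dimred}, the assumed $S^1$-invariant solution of the gravitating vortex equations for $\phi$ on $\PP^1$ corresponds to an $\SU(2)\times S^1$-invariant solution of \eqref{eq:CKYM2} on $(X,E_\phi)$, where $X = \PP^1\times\PP^1$ and $E_\phi$ is the extension \eqref{eq:bundleE} determined by $\phi$. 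The $\CC^*$-isotropy \eqref{eq:S1action} of $\phi$ in $\SL(2,\CC)$ acts on the pair $(X,E_\phi)$, so the $S^1$-invariant solution is a zero of the K\"ahler--Yang--Mills moment map fixed by the compact group $G = \SU(2)\times S^1$ (the first factor being the dimensional-reduction symmetry, the second the residual isotropy of $\phi$). This places us exactly in the setting of \cite{GFT}: a solution with a nontrivial reductive automorphism group, around which one studies nearby complex structures / holomorphic bundles.

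Next I would identify the relevant deformation space. Deforming $\phi$ within $H^0(L)$ deforms the extension class in \eqref{eq:bundleE} and hence the holomorphic structure on $E_\phi$; the tangent space to these deformations (modulo the gauge/automorphism action coming from $\SL(2,\CC)$ acting on $\PP^1$) is precisely the cokernel appearing in \eqref{eq:exactseq2}, i.e. $H^0(L)/\langle\rho(s_1),\rho(s_2)\rangle$, with its residual $\CC^*$-action. The choice to restrict $\phi''$ to the complement $\langle\rho(s_1),\rho(s_2)\rangle^\perp$ is a slice for the $\SL(2,\CC)$-action transverse to the orbit directions generated by $s_1,s_2$ (the direction $s_0$ being already absorbed by the $\CC^*$-equivariance). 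The main result of \cite{GFT} then asserts that, for $\phi''$ in this finite-dimensional slice sufficiently close to $\phi$, the existence of an $\SU(2)$-invariant solution of the K\"ahler--Yang--Mills equations on $(X,E_{\phi''})$ is equivalent to polystability of $\phi''$ with respect to the linearized action of the complexified isotropy group $G^{\CC} = \SL(2,\CC)\times\CC^*$ on the slice --- and since $\SU(2)$ acts trivially on the slice (all deformations being pulled back from $\PP^1$ with trivial $\SU(2)$-weight), this reduces to $\CC^*$-polystability of $\phi''$ in $\langle\rho(s_1),\rho(s_2)\rangle^\perp$. Unwinding through Proposition~\ref{prop:dimred} gives part (2), while part (1) is the statement that, within this slice and near $\phi$, GIT polystability for the full $\SL(2,\CC)$-action coincides with polystability for the $\CC^*$-subgroup; this is a local Kempf--Ness / Luna-slice statement, which I would prove by observing that the $\SL(2,\CC)$-orbit of a point in the slice meets the slice in (an open piece of) its $\CC^*$-orbit, so orbit-closedness is detected by $\CC^*$ alone.

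The principal obstacle is verifying that the hypotheses of the main theorem of \cite{GFT} genuinely apply here, in particular that the relevant second-order deformation/obstruction conditions hold: one must check that the given solution is ``infinitesimally unobstructed in the appropriate sense,'' i.e. that the full deformation complex for $(X,E_\phi)$ as an $\SU(2)$-equivariant pair decomposes compatibly with \eqref{eq:exactseq2} and that the only obstructions to integrating slice deformations lie in the $\CC^*$-direction already accounted for by the GIT statement. Concretely this means controlling $H^1$ of the Atiyah-type complex governing deformations of $(X,E_\phi)$ and showing it matches $\langle\rho(s_1),\rho(s_2)\rangle^\perp$ after taking $\SU(2)$-invariants --- a computation feasible because everything is built by pullback from $\PP^1$ and the cohomology of $\cO_{\PP^1}(k)$ is explicit, but one that requires care with the equivariant bookkeeping. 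A secondary technical point is ensuring the $S^1\subset\CC^*$-invariance of Yang's symmetric solution upgrades to the reductive $\CC^*$-fixedness needed for the Kempf--Ness picture, which follows from the uniqueness clause in Theorem~\ref{th:B-GP} applied to the $\CC^*$-flow of a solution, or alternatively from averaging; I would spell this out as a short preliminary lemma.
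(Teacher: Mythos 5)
Your overall architecture is the same as the paper's: translate the $S^1$-invariant gravitating vortex into an invariant solution of the K\"ahler--Yang--Mills equations on $\PP^1\times\PP^1$ via Proposition~\ref{prop:dimred}, identify the slice $\langle\rho(s_1),\rho(s_2)\rangle^\perp$ with the relevant piece of the deformation space $H^1(X,L^*_{\omega_\tau})$ of \cite{GFT}, apply Theorem~\ref{th:GFT} to polystable deformations, and average back to an $\SU(2)$-invariant solution. For part (2) this matches the paper step by step. Two caveats: first, you state the main result of \cite{GFT} as an \emph{equivalence} between existence of solutions and polystability, whereas it only gives the implication ``closed $K^c$-orbit $\Rightarrow$ solution''; this does not hurt you since only that direction is used. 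Second, you assert without verification that the residual compact symmetry is exactly $\SU(2)\times S^1$; the paper devotes Lemma~\ref{lem:Kc} to showing that $\operatorname{Isom}(\omega_\tau)$ is not larger (ruling out $\omega = t\,\omega_{FS}$ via the contradiction $2N=\tau$ with Theorem~\ref{th:B-GP}), and this check is genuinely needed to know that the group acting effectively on the slice is just $\CC^*$.

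Where you genuinely diverge is part (1). The paper proves it by an explicit computation: Lemma~\ref{lem:GIT} characterises closed $\CC^*$-orbits in the slice by the vanishing orders at $0$ and $\infty$, and then a direct expansion shows that a strictly semistable $\phi'$ near $\phi$ with those vanishing orders has $a_{N/2-1}\neq 0$, hence lies outside the slice. You instead invoke the local Kempf--Ness/Luna-slice principle that, near a point with closed orbit and reductive stabiliser $H=\CC^*$, $G$-polystability is detected by $H$-polystability on the slice. This is a legitimate alternative --- indeed the paper itself sketches exactly this route in the closing remark of Section~\ref{sec:qualitative}, via Sz\'ekelyhidi's argument applied to the finite-dimensional moment map on $S^N\PP^1$ and the Kempf--Ness theorem --- but as written your justification (``the $\SL(2,\CC)$-orbit of a point in the slice meets the slice in an open piece of its $\CC^*$-orbit'') is only a heuristic; to make it rigorous you would need to run the moment-map/Kempf--Ness argument, or else do the explicit coefficient computation the paper does. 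The conceptual route buys generality and transparency; the paper's computation buys a self-contained elementary proof. The easy direction (polystable $\Rightarrow$ closed $\CC^*$-orbit) is, in both treatments, the standard fact that a closed $G$-orbit has closed orbits under reductive subgroups.
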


Note that for the case of the Einstein--Bogomol'nyi equations an $S^1$-symmetric solution exists by Theorem \ref{th:Yang} $(1)$. We expect that the methods of \cite{Yang3} can be adapted to prove the existence of $S^1$-symmetric gravitating vortices on $\PP^1$ for $c \neq 0$. 
Theorem \ref{th:triumph} $(1)$ provides a characterization of polystable points $\phi'' \in \langle \rho(s_1),\rho(s_2) \rangle^\perp$, in terms of a particular $1$-parameter subgroup (see Lemma \ref{lem:GIT}). Theorem \ref{th:triumph} $(2)$ ensures the existence of solutions for any polystable $\phi''$ close enough $\phi$, in the complement of the $\mathfrak{sl}(2,\CC)$-action, recovering a qualitative version of Yang's Theorem around the symmetric solution.

To proceed with the proof, assume for a moment that $\phi$ is an arbitrary section of $H^0(L)$. To avoid confusion, we introduce the notation $\tilde \PP^1$ for a different copy of the Riemann sphere, and consider the holomorphic $\widetilde{SU}(2)$-equivariant bundle $E$ on $X = \PP^1 \times \tilde \PP^1$ determined by $\phi$, defined in \eqref{eq:bundleE}. Here, $\widetilde{SU}(2)$ acts on $\tilde \PP^1 \cong \widetilde{SU}(2)/\widetilde{U}(1)$ in the standard way. Assume further that $(\PP^1,L,\phi)$ admits a solution $(\omega,h)$ of the gravitating vortex equations.  By Proposition \ref{prop:dimred}, $(\omega,h)$ determines a $\widetilde{SU}(2)$-invariant solution of the K\"ahler--Yang--Mills equations $(\omega_\tau,H)$ on $(X,E)$. Let $\cX_\tau$ the extended gauge group of $(\omega_\tau,H)$, and define
$$
K = \cX_\tau \cap \Aut (X,E),
$$
where $\Aut (X,E)$ denotes the space of holomorphic automorphisms of $E$ covering an automorphisms of $X$. The group $K$ is compact and finite-dimensional, and hence admits a complexification $K^c$. In \cite{GFT}, a finite-dimensional representation
$$
K^c \curvearrowright H^1(X,L^*_{\omega_\tau})
$$
parameterizing infinitesimal deformations of the pair $(X,E)$ compatible with $\omega_\tau$ 
is constructed, using elliptic operator theory. The compatibility condition amounts to considering infinitesimal deformations of $(X,E)$ such that the corresponding infinitesimal deformation of $X$ is compatible with the symplectic structure $\omega_\tau$. By application of the Kuranishi method, any small deformation of $(X,E)$ determines a point in $H^1(X,L^*_{\omega_\tau})$. Theorem \ref{th:triumph} will follow from the following result.

\begin{theorem}[\cite{GFT}]\label{th:GFT}
Any small deformation of $(X,E)$ with closed $K^c$-orbit in $H^1(X,L^*_{\omega_\tau})$ admits a solution of the K\"ahler--Yang--Mills equations.
\end{theorem}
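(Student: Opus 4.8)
The plan is to prove this via the moment-map strategy that underlies Sz\'ekelyhidi's theorem for constant scalar curvature K\"ahler metrics, suitably extended to the coupled setting. By Proposition \ref{prop:momentmap-pairs}, solutions of the K\"ahler--Yang--Mills equations are zeros of the moment map $\mu_\alpha$ for the $\cX$-action on the space $\cP$ of integrable pairs, so the $\widetilde{SU}(2)$-invariant solution $(\omega_\tau,H)$ corresponds to a zero $p_0 = (J_0,A_0) \in \mu_\alpha^{-1}(0) \cap \cP$ whose $\cX$-stabilizer is precisely $K = \cX_\tau \cap \Aut(X,E)$. Since $K$ is compact it complexifies to $K^c$, and the strategy is to reduce the infinite-dimensional existence problem in a neighbourhood of $p_0$ to a finite-dimensional GIT problem on $H^1(X,L^*_{\omega_\tau})$, equipped with the induced linear $K^c$-action.

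First I would construct a Kuranishi-type slice for the $\cX$-action through $p_0$. The relevant deformation complex combines deformations of the complex structure $J_0$ with deformations of the holomorphic structure on $E$, subject to the integrability and $\omega_\tau$-compatibility constraints, and its first cohomology is the finite-dimensional space $H^1(X,L^*_{\omega_\tau})$. Using elliptic theory and a Hodge-theoretic splitting of the deformation complex relative to the metric determined by $(\omega_\tau,H)$, together with gauge fixing, I would produce a $K$-equivariant local slice $\cS \hookrightarrow \cP$ transverse to the $\cX$-orbit, modelled on a $K$-invariant neighbourhood of $0$ in $H^1(X,L^*_{\omega_\tau})$, so that (via the Kuranishi method) every small deformation of $(X,E)$ is represented, up to gauge, by a point of $\cS$.

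The heart of the argument is to show that the restriction of $\mu_\alpha$ to $\cS$ agrees, to the relevant order, with the moment map $\mu_K$ for the linear Hamiltonian $K$-action on $(H^1(X,L^*_{\omega_\tau}),\omega_{\mathrm{lin}})$. This is where the main difficulty lies: the group $\cH$ of Hamiltonian symplectomorphisms does not complexify, so the $K^c$-action on the slice is not literally induced by a complexified group action on $\cP$. Instead, following \cite{GFT}, one must match the finite-dimensional $K^c$-orbits in $H^1$ with the ``integrable'' directions in $\cP$ by means of the formally integrable complex structure $\mathbf{I}$ of \eqref{eq:complexstructureI}, and then control the nonlinear remainder via an implicit-function-theorem argument. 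The quantitative elliptic estimates needed to bound these remainder terms, and to guarantee that for small deformations the zero set of $\mu_\alpha$ within $\cS$ coincides with $\mu_K^{-1}(0)$, form the technical core of the proof; the positivity hypothesis $\alpha > 0$, in force throughout, ensures that $\mathbf{I}$ is compatible with $\omega_\alpha$ so that this moment-map geometry is available.

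With this identification in hand, the statement follows from the finite-dimensional Kempf--Ness theorem. A small deformation $a \in H^1(X,L^*_{\omega_\tau})$ with closed $K^c$-orbit is polystable for the linear $K^c$-representation, and hence its orbit meets $\mu_K^{-1}(0)$. Transporting this zero back through the slice $\cS$ yields a point of $\mu_\alpha^{-1}(0) \cap \cP$ in the corresponding deformation class, which by Proposition \ref{prop:momentmap-pairs} is exactly a solution of the K\"ahler--Yang--Mills equations on the deformed pair $(X,E)$. I expect the slice construction and, above all, the order-by-order identification of $\mu_\alpha|_{\cS}$ with $\mu_K$ to be the decisive obstacle, the remaining GIT input being the standard Kempf--Ness correspondence.
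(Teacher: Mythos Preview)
Your proposal is correct and matches the approach the paper indicates: the theorem is quoted from \cite{GFT} and is not proved in this paper, with the accompanying remark stating only that it ``relies on the moment map interpretation of the K\"ahler--Yang--Mills equations, and follows adapting arguments for the constant scalar curvature K\"ahler equation due to Sz\'ekelyhidi.'' Your sketch---a $K$-equivariant Kuranishi slice in $\cP$ modelled on $H^1(X,L^*_{\omega_\tau})$, identification of $\mu_\alpha|_{\cS}$ with the finite-dimensional moment map $\mu_K$, and then Kempf--Ness---is precisely Sz\'ekelyhidi's strategy transported to the coupled setting, which is exactly what \cite{GFT} does.
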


\begin{remark}
The previous theorem relies on the moment map interpretation of the K\"ahler--Yang--Mills equations, and follows adapting arguments for the constant scalar curvature K\"ahler equation due to Sz\'ekelyhidi \cite{Szekelyhidi} (see also \cite{Tipler}). 
\end{remark}

In the particular case we are dealing with, $h^{0,1}(X) = 0 = h^{0,2}(X)$ and $X$ is simply connected, which implies the existence of an exact sequence (see \cite[Eq. (34)]{GFT})
\begin{equation}\label{eq:exactseq}
  \xymatrix{
    0 \ar[r] & H^0(\operatorname{End} E) \ar[r] & \Lie \operatorname{Aut} (X,E) \ar[r] & \Lie \operatorname{Aut} X \\
 \ar[r]^{\rho \qquad} & H^1(\operatorname{End} E)   \ar[r]^{\iota} & H^1(X,L^*_{\omega_\tau}) \ar[r] & 0
  }
\end{equation}
such that the map $\iota$ is $K^c$-equivariant with respect to the $K^c$-action on $H^1(\operatorname{End} E)$ by pull-back, and $\rho$ is induced by pull-back of $E$ with respect to elements in $\operatorname{Aut} X$. 

To prove Theorem \ref{th:triumph}, we need an explicit description of the group $K^c$ and to relate \eqref{eq:exactseq} with the exact sequence \eqref{eq:exactseq2}, when $\phi$ is an in \eqref{eq:phisymmetric}.

\begin{lemma}\label{lem:Kc}
Suppose that the solution $(\omega,h)$ is invariant by the action of $S^1 \subset \CC^*$ (see \eqref{eq:S1action}). Then, 
$$
K = S^1/\mathbb{Z}_2 \times \widetilde{PU}(2) \times \mathring{S}^1, 
$$
where $\widetilde{PU}(2)$ acts on $\tilde \PP^1$ and $\mathring{S}^1$ is another copy of $S^1$, which acts by multiplication on the fibres of $E$. Furthermore,
$$
\Aut (X,E) = K^c = \CC^*/\mathbb{Z}_2 \times \widetilde{\operatorname{PGL}}(2,\CC) \times \mathring{\CC}^*.
$$
\end{lemma}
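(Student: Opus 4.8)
The plan is to compute the group $K = \cX_\tau \cap \Aut(X,E)$ directly from its definition, using the structure of $X = \PP^1 \times \tilde\PP^1$ and the $\widetilde{SU}(2)$-equivariant extension $E$ determined by the symmetric section $\phi \cong x_0^{N/2}x_1^{N/2}$. First I would identify $\Aut(X,E)$, the group of holomorphic automorphisms of the total space of $E$ covering an automorphism of $X$. Since $h^{0,1}(X) = h^{0,2}(X) = 0$ and $b_1(X)=0$, the automorphisms of $X$ split as $\Aut X = \operatorname{PGL}(2,\CC) \times \widetilde{\operatorname{PGL}}(2,\CC)$, one factor for each $\PP^1$. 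The bundle $E$ sits in the extension $0 \to p^*L \to E \to q^*\cO_{\tilde\PP^1}(2) \to 0$ with extension class $\phi \in H^0(L)$; an automorphism of $X$ lifts to $E$ precisely when it preserves this extension class up to scale. The $\widetilde{\operatorname{PGL}}(2,\CC)$ factor acts trivially on $H^0(L)$ and on $H^1(\PP^1,\cO(-2))$, so it always lifts; the $\operatorname{PGL}(2,\CC)$ factor on the first $\PP^1$ acts on $H^0(L) \cong S^N(\CC^2)^*$ by the standard representation, and preserves the line $\CC\cdot\phi$ exactly when the corresponding element of $\operatorname{SL}(2,\CC)$ lies in the isotropy subgroup of $[\phi]$, which for the symmetric $\phi$ is the diagonal torus \eqref{eq:S1action} modulo its intersection with the centre, i.e. $\CC^*/\ZZ_2$. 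Finally the fibrewise automorphisms of $E$ covering the identity: because $\phi \neq 0$ makes the extension non-split and $L \not\cong \cO_{\tilde\PP^1}(2)$ on any fibre, $H^0(\operatorname{End} E) = \CC\Id$, contributing the scalar factor $\mathring\CC^*$. Assembling these three pieces gives $\Aut(X,E) = \CC^*/\ZZ_2 \times \widetilde{\operatorname{PGL}}(2,\CC) \times \mathring\CC^*$.

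Next I would intersect this with the extended gauge group $\cX_\tau$. By definition $\cX_\tau$ consists of automorphisms of the unitary frame bundle covering Hamiltonian symplectomorphisms of $(X,\omega_\tau)$ and preserving the hermitian metric $H$; the point is that an element of $\Aut(X,E)$ lies in $\cX_\tau$ iff it is an isometry for $(\omega_\tau, H)$, since a holomorphic symplectomorphism of a compact K\"ahler manifold is automatically an isometry of the K\"ahler metric, and a holomorphic bundle automorphism preserving a hermitian metric is unitary. So $K$ is the maximal compact subgroup of $\Aut(X,E)$ compatible with the chosen solution $(\omega_\tau, H)$. For the $\widetilde{\operatorname{PGL}}(2,\CC)$ factor, since $\omega_\tau$ restricts to (a multiple of) the Fubini--Study metric on $\tilde\PP^1$ and $H$ is $\widetilde{SU}(2)$-invariant by construction in Proposition \ref{prop:dimred}, the compact part is $\widetilde{PU}(2)$. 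For the $\mathring\CC^*$ of fibre scalings, the unitary ones are $\mathring S^1$. For the first-factor torus $\CC^*/\ZZ_2$: here I would use the hypothesis that the solution $(\omega,h)$ on $\PP^1$ is $S^1$-invariant under \eqref{eq:S1action}; this is exactly what forces $\omega$ and $h$ — hence $\omega_\tau$ and $H$ after dimensional reduction — to be preserved by the compact subtorus $S^1 \subset \CC^*$, giving the factor $S^1/\ZZ_2$. Compactness and finite-dimensionality of $K$ are then immediate, and its complexification is the full $\Aut(X,E)$ computed above, establishing the identity $\Aut(X,E) = K^c = \CC^*/\ZZ_2 \times \widetilde{\operatorname{PGL}}(2,\CC) \times \mathring\CC^*$.

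The main obstacle I anticipate is the careful bookkeeping in the first step: pinning down $\Aut(X,E)$ requires knowing that no "mixed" automorphisms exist — that is, that an automorphism of $E$ covering a nontrivial automorphism of $X$ must respect the extension structure rather than, say, interchanging the sub- and quotient-bundle roles — and that the extension class being a nonzero section genuinely rigidifies the first $\PP^1$-factor down to the isotropy torus. This uses that $\operatorname{Hom}(q^*\cO_{\tilde\PP^1}(2), p^*L) = 0$ (there is no nonzero map in the "wrong direction" along the $\tilde\PP^1$ fibres, since $\cO_{\tilde\PP^1}(-2)$ has no sections), so the subbundle $p^*L \subset E$ is canonical, and hence any bundle automorphism descends to automorphisms of $p^*L$ and $q^*\cO_{\tilde\PP^1}(2)$ compatible with the base automorphism, which then pins down the action on the extension class. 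Once this rigidity is in hand, the intersection with $\cX_\tau$ is the routine observation that holomorphic isometries are the compact real forms, and the $S^1$-invariance hypothesis on the solution supplies exactly the compact torus in the first factor.
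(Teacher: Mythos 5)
Your proposal is correct in outline but runs in the opposite direction from the paper's proof. The paper works \emph{metric-first}: it computes the isometry group of $\omega_\tau$, showing $\operatorname{Isom}(\omega_\tau)=S^1/\mathbb{Z}_2\times\widetilde{PU}(2)$, and the key step there is \emph{excluding} the larger group $\PU(2)\times\widetilde{PU}(2)$ --- if the first factor were all of $\PU(2)$ then $\omega=t\,\omega_{FS}$, and a direct manipulation of \eqref{eq:gravvortexeq1} with this ansatz forces $2N=\tau$, contradicting Theorem \ref{th:B-GP}; it then lifts this compact group to $E$, adds the constant fibre scalings $\mathring{S}^1$, and obtains $\Aut(X,E)=K^c$ at the end by a direct computation with the extension or by a Matsushima--Lichnerowicz argument. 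You work \emph{holomorphically first}: you pin down $\Aut(X,E)$ via the rigidity of the extension class (using $H^0(\Hom(q^*\cO_{\tilde\PP^1}(2),p^*L))=0$ to make the subbundle canonical, so a lift exists iff $f^*\phi\in\CC^*\cdot\phi$) and then cut down to the subgroup preserving $(\omega_\tau,H)$. Your route buys a transparent proof of the equality $\Aut(X,E)=K^c$ and avoids the analytic digression about Fubini--Study metrics; the paper's route avoids having to classify the stabilizer of $[\phi]$ and the "no mixed automorphisms" bookkeeping you rightly identify as the delicate point. Both arguments are sound at the level of Lie algebras and identity components, which is all that the application to Theorem \ref{th:triumph} requires.

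One small caveat, which affects your argument and the paper's statement equally but only at the level of $\pi_0$: the stabilizer of the line $\CC\cdot x_0^{N/2}x_1^{N/2}$ in $\operatorname{PGL}(2,\CC)$ is not the torus $\CC^*/\mathbb{Z}_2$ but its normalizer, since the involution $[x_0:x_1]\mapsto[x_1:x_0]$ preserves $\phi$ up to sign. So your computation, carried out literally, yields an extra $\mathbb{Z}_2$ in $\Aut(X,E)$ (and correspondingly in $K$ whenever $\omega$ happens to be invariant under the swap). This discrepancy with the displayed identity is harmless for the deformation argument, but you should either note it or restrict to identity components when asserting $\Aut(X,E)=\CC^*/\mathbb{Z}_2\times\widetilde{\operatorname{PGL}}(2,\CC)\times\mathring{\CC}^*$.
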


\begin{proof}
We claim that the group of isometries $\operatorname{Isom}(\omega_\tau)$ of $\omega_\tau = \omega + \frac{4}{\tau}\tilde{\omega}_{FS}$ is given by 
\begin{equation}\label{eq:isom}
\operatorname{Isom}(\omega_\tau) = S^1/\mathbb{Z}_2 \times \widetilde{PU}(2).
\end{equation}
By hypothesis on $\omega$ we have $S^1/\mathbb{Z}_2 \times \widetilde{PU}(2) \subset \operatorname{Isom}(\omega_\tau)$. For the other inclusion, we note that $S^1/\mathbb{Z}_2$ is contained in a maximal compact subgroup of $\operatorname{PGL}(2,\CC) = \Aut(\PP^1)$, since $\omega$ is K\"ahler. Maximal compact subgroups of $\operatorname{PGL}(2,\CC)$ are isomorphic to $\PU(2)$, by to conjugation, and for $S^1/\mathbb{Z}_2 \subset G \subset \PU(2)$ a chain of compact subgroups, one has $G = S^1/\mathbb{Z}_2$ or $G = \PU(2)$. We are left with only two possibilities. If the isometry group is $\PU(2) \times \widetilde{PU}(2)$ we have $\omega = t \omega_{FS}$ for a suitable choice of $t > 0$. By the choice of $\phi$ \eqref{eq:phisymmetric}, a direct check shows that in this case $(\omega,h)$ is not a solution of the gravitating vortex equations (with this ansatz, some manipulations on \eqref{eq:gravvortexeq1} imply $2N = \tau$, which contradicts Theorem \ref{th:B-GP}). Hence, necessarily \eqref{eq:isom} holds. 

Since $X$ is simply connected, isometries for $\omega_\tau$ are always Hamiltonian and we obtain a monomorphism 
$$
\theta \colon S^1/\mathbb{Z}_2 \times \widetilde{PU}(2) \to K.
$$
This monomorphism is constructed identifying $E \cong p^*L \oplus q^*\mathcal{O}_{\tilde{\PP}^1}(2)$ as a smooth complex vector bundle, and using the natural lift of the $\PU(2) \times \widetilde{PU}(2)$-action on $X$ to $p^*L \oplus q^*\mathcal{O}_{\tilde{\PP}^1}(2)$ ($N = c_1(L)$ is even by assumption). Note that the $S^1/\mathbb{Z}_2$-action on $\PP^1$ lifts to $L$, preserving the section $\phi$, and hence the induced $S^1/\mathbb{Z}_2 \times \widetilde{PU}(2)$-action on $E$ is holomorphic. The first part of the statement follows easily, from the fact the automorphisms of $E$ which project to the identity automorphism on $X$ are necessarily constant.

As for the second part, the proof follows from a direct calculation using the explicit form of the extension \eqref{eq:bundleE} or, alternatively, by applying a Matsushima--Lichnerowicz type theorem for the K\"ahler--Yang--Mills equations \cite{AGG2}.
\end{proof}

To relate \eqref{eq:exactseq} with the exact sequence \eqref{eq:exactseq2}, we need the following fact.
\begin{lemma}\label{lem:inclusion}
There is an injection
\begin{equation}\label{eq:inclusion}
H^0(L) \hookrightarrow H^1(\operatorname{End} E)^{\widetilde{\operatorname{PGL}}(2,\CC)} \subset H^1(\operatorname{End} E)
\end{equation}
where $H^1(\operatorname{End} E)^{\widetilde{\operatorname{PGL}}(2,\CC)}$ denotes the space of $\widetilde{\operatorname{PGL}}(2,\CC)$-invariant infinitesimal deformations of $E$. 
\end{lemma}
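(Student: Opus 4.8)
The plan is to construct the injection~\eqref{eq:inclusion} directly from the extension~\eqref{eq:bundleE} defining $E$, and to check $\widetilde{\operatorname{PGL}}(2,\CC)$-invariance of its image. Recall that $X = \PP^1 \times \tilde\PP^1$ with $E$ sitting in $0 \to p^*L \to E \to q^*\mathcal{O}_{\tilde\PP^1}(2) \to 0$. First I would use this extension to produce a sub-line-bundle and quotient-line-bundle filtration of $\operatorname{End}E \cong E \otimes E^*$. Tensoring the defining sequence with $E^*$ (and with $p^*L^*$, $q^*\mathcal{O}_{\tilde\PP^1}(-2)$ as needed) gives a filtration of $\operatorname{End}E$ whose graded pieces are $\mathcal{O}_X$, $p^*L \otimes q^*\mathcal{O}_{\tilde\PP^1}(-2)$, $p^*L^* \otimes q^*\mathcal{O}_{\tilde\PP^1}(2)$, and $\mathcal{O}_X$. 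The relevant graded piece for our purposes is $\Hom(q^*\mathcal{O}_{\tilde\PP^1}(2), p^*L) = p^*L \otimes q^*\mathcal{O}_{\tilde\PP^1}(-2)$, which lives inside $\operatorname{End}E$ as the bundle of endomorphisms landing the quotient into the sub.

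Next I would compute the relevant cohomology. By the Künneth formula,
\begin{equation*}
H^1\bigl(X, p^*L \otimes q^*\mathcal{O}_{\tilde\PP^1}(-2)\bigr) \cong H^0(\PP^1,L) \otimes H^1(\tilde\PP^1,\mathcal{O}_{\tilde\PP^1}(-2)) \cong H^0(L),
\end{equation*}
using $h^1(\PP^1,L) = 0$ (since $\deg L = N \geq 0$) and the one-dimensionality of $H^1(\tilde\PP^1,\mathcal{O}_{\tilde\PP^1}(-2))$. I would then run the long exact sequence in cohomology associated to the two-step filtration of $\operatorname{End}E$ having $p^*L \otimes q^*\mathcal{O}_{\tilde\PP^1}(-2)$ as its sub. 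The map $H^1(X, p^*L \otimes q^*\mathcal{O}_{\tilde\PP^1}(-2)) \to H^1(\operatorname{End}E)$ will be injective provided the incoming connecting/inclusion obstruction from $H^0$ of the quotient vanishes; here the quotient of $\operatorname{End}E$ by that line bundle is an extension of $\mathcal{O}_X$ (twice) by $p^*L^* \otimes q^*\mathcal{O}_{\tilde\PP^1}(2)$, and one checks $H^0(X, p^*L^* \otimes q^*\mathcal{O}_{\tilde\PP^1}(2)) = H^0(L^*) \otimes H^0(\mathcal{O}_{\tilde\PP^1}(2)) = 0$ when $L$ is nontrivial, together with the statement that the coboundary $H^0(\mathcal{O}_X^{\oplus 2}) \to H^1(\ldots)$ induced by the extension class vanishes on the diagonal copy (or is handled by the splitting of $\operatorname{End}E = \mathcal{O}_X \oplus \operatorname{End}_0 E$). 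So surjectivity onto $H^1$ of $\operatorname{End}_0 E$ is what gives injectivity of the composite.

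For the invariance claim, I would observe that $\widetilde{\operatorname{PGL}}(2,\CC)$ acts only on the second factor $\tilde\PP^1$, lifting to the standard action on $q^*\mathcal{O}_{\tilde\PP^1}(2)$ and trivially on $p^*L$, hence acts on $p^*L \otimes q^*\mathcal{O}_{\tilde\PP^1}(-2)$ with the cohomology $H^1$ carrying the representation $H^0(L) \otimes H^1(\tilde\PP^1,\mathcal{O}_{\tilde\PP^1}(-2))$. Since $H^1(\tilde\PP^1,\mathcal{O}_{\tilde\PP^1}(-2)) \cong H^0(\tilde\PP^1,\mathcal{O}_{\tilde\PP^1})^* \cong \CC$ is the trivial $\widetilde{\operatorname{PGL}}(2,\CC)$-representation (as already noted in the excerpt when establishing $\SU(2)$-equivariance of $E$), the induced action on $H^0(L)$ is trivial, so the image indeed lands in $H^1(\operatorname{End}E)^{\widetilde{\operatorname{PGL}}(2,\CC)}$. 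I expect the main obstacle to be the bookkeeping in the long exact sequence of the filtration of $\operatorname{End}E$ — specifically, verifying that the image of $H^0$ of the complementary graded pieces under the various connecting maps does not obstruct injectivity, which amounts to carefully tracking how the extension class $\phi$ enters and using that $H^0(L^*) = 0$, $H^0(\mathcal{O}_{\tilde\PP^1}(-2)) = 0$. The cleanest route is probably to split off the trace, write $\operatorname{End}E = \mathcal{O}_X \oplus \operatorname{End}_0 E$, and work directly with the trace-free part, where the filtration has only three graded pieces and the argument becomes essentially a diagram chase.
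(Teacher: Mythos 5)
Your route is genuinely different from the paper's. The paper's proof simply writes down the explicit Dolbeault representative $\beta(\phi')=p^*\phi'\otimes q^*\eta\in\Omega^{0,1}(X,\Hom(q^*\cO_{\tilde\PP^1}(2),p^*L))$ of the infinitesimal deformation obtained by moving the extension class from $\phi$ to $\phi+\phi'$, and takes its harmonic part with respect to $(\omega_\tau,H)$; you instead locate $S:=\Hom(q^*\cO_{\tilde\PP^1}(2),p^*L)$ as the square-zero subsheaf of $\End E$ and run the long exact sequence of the induced filtration. The two constructions give the \emph{same} map $H^0(L)\cong H^1(X,S)\to H^1(\End E)$, and your K\"unneth computation and the equivariance argument (triviality of the one-dimensional $\widetilde{\operatorname{PGL}}(2,\CC)$-representation $H^1(\tilde\PP^1,\cO_{\tilde\PP^1}(-2))$) are fine.

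The genuine gap is exactly the step you defer to "bookkeeping": the connecting map $H^0(\cO_X^{\oplus 2})\to H^1(X,S)$ does \emph{not} vanish. It is cup product with the extension class: lifting $(a,b)\in H^0(\cO_X^{\oplus 2})$ to the smooth section $\operatorname{diag}(a,b)$ of $\End E$ and applying $\dbar_{\End E}=\dbar_0+[\beta(\phi),\cdot\,]$ yields $(b-a)\beta(\phi)$, whose class in $H^1(X,S)\cong H^0(L)$ is $(b-a)\phi\neq 0$. So the coboundary vanishes only on the diagonal, its image is $\CC\phi$, and $H^1(X,S)\to H^1(\End E)$ has one-dimensional kernel $\CC\phi$; splitting off the trace does not help, since $\operatorname{diag}(1,-1)$ is trace-free. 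Geometrically this is forced: rescaling the extension class gives an isomorphic bundle, so the deformation in the direction $\phi'=\phi$ is trivial (explicitly, $\beta(\phi)=-\dbar_{\End E}\operatorname{diag}(1,0)$ is exact, hence its harmonic part is zero). Your argument, carried out honestly, therefore proves injectivity only of $H^0(L)/\CC\phi\to H^1(\End E)$ \textemdash{} which incidentally shows that the Lemma's literal statement (and the paper's one-line construction, which never addresses injectivity) should be read modulo this one-dimensional degeneracy; the degeneracy is harmless downstream, since $\phi''\in\phi+\CC\phi$ defines the same bundle and a trivially closed $\CC^*$-orbit, and it is consistent with the one-dimensional kernel already present in \eqref{eq:exactseq2}. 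To repair your write-up, compute the coboundary as above and either weaken the conclusion to injectivity on a complement of $\CC\phi$ or restrict the domain accordingly.
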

\begin{proof}
This map is constructed associating to $\phi' \in H^0(L)$ the $\widetilde{\operatorname{PGL}}(2,\CC)$-equivariant extension \eqref{eq:bundleE} corresponding to $\phi + \phi'$. More explicitely, it is given by
$$
\phi' \mapsto [\beta(\phi')],
$$
where $[\beta(\phi')]$ denotes the harmonic part of
$$
\beta(\phi') = p^*\phi' \otimes q^*\eta \in \Omega^{0,1}(X,\Hom(q^*\cO_{\tilde \PP^1}(2),p^*L))
$$
with respect to $(\omega_\tau,H)$, and 
$$
\eta = \frac{\sqrt{8\pi}}{\tau}\frac{dz \otimes d\overline{z}}{(1 + |z|^2)^2} \in \Omega^{0,1}(\tilde \PP^1,\Hom(\cO_{\tilde \PP^1}(2),\cO_{\tilde \PP^1}))
$$
for a choice of coordinate $z = \frac{\tilde x_0}{\tilde x_1}$ on $\tilde \PP^1$. 
\end{proof}

To apply Theorem \ref{th:GFT}, it is convenient to do the following change of variables in \eqref{eq:inclusion}: since $\phi$ is fixed by $\CC^*$ (see \eqref{eq:S1action}), the following bijective map is $\CC^*$-equivariant
$$
H^0(L) \to H^0(L)\colon \phi' \mapsto \phi + \phi'.
$$
Therefore, for a small neighbourhood 
$$
\phi \in U \subset H^0(L)
$$ 
we can assume that Theorem \ref{th:GFT} applies to $\iota(\phi' - \phi)$ for any element $\phi ' \in U$, where we abuse of the notation and identify $\phi' - \phi$ with its image in $H^1(\operatorname{End} E)$. 

From Lemma \ref{lem:Kc} and Lemma \ref{lem:inclusion}, it is now easy to see that the sequence \eqref{eq:exactseq} restricts to the exact sequence \eqref{eq:exactseq2}, where 
$$
\iota (H^0(L)) \subset H^1(X,L_{\omega_\tau}^*)
$$
is identified with $\iota(H^0(L)) \cong H^0(L)/\langle \rho(s_1),\rho(s_2) \rangle$. Note that $\iota (H^0(L))$ inherits a linear action of 
$$
\CC^*/\mathbb{Z}_2 = K^c/(\widetilde{\operatorname{PGL}}(2,\CC) \times \mathring{\CC}^*)
$$ 
such that the previous identification is equivariant. Since the action of $\mathbb{Z}_2 \subset  \CC^*$ on $H^0(L)$ is trivial, we will conveniently work with $\CC^*$ instead.

For the proof of Theorem \ref{th:triumph} (1) we need the following lemma.

\begin{lemma}\label{lem:GIT}
For any $\phi'' \in \langle \rho(s_1),\rho(s_2) \rangle^\perp$ such that $\phi'' \neq \phi$, the $\CC^*$-orbit of $\phi''$ is closed if and only if $\phi''$ vanishes at $0$ and $\infty$ with multiplicity $< N/2$.
\end{lemma}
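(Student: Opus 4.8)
The plan is to compute the $\CC^*$-action on $\langle \rho(s_1),\rho(s_2) \rangle^\perp$ explicitly in the monomial basis and then apply the Hilbert--Mumford criterion for the specific one-parameter subgroup~\eqref{eq:S1action}. Recall that $\CC^*$ acts on $H^0(L) = \CC[x_0,x_1]_N$ by pull-back via $(x_0,x_1) \mapsto (\lambda^{-1}x_0, \lambda x_1)$ (the precise sign of the weight is a convention fixed by the embedding~\eqref{eq:S1action}), so that a monomial $x_0^j x_1^{N-j}$ has weight $N - 2j$. Writing $\phi'' = \sum_{j} a_j x_0^j x_1^{N-j}$ with $a_{N/2-1} = a_{N/2+1} = 0$, the weight decomposition of $\phi''$ is supported on the set $\{N-2j : a_j \neq 0\} \subset \{N, N-2, \dots, 4, 2, 0, -2, -4, \dots, -N\} \setminus \{\pm 2\}$. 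Note $\phi$ itself $= x_0^{N/2}x_1^{N/2}$ sits in weight $0$.

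First I would recall the elementary fact that for a linear $\CC^*$-action on a vector space $V$ with weight decomposition $v = \sum_k v_k$, the orbit $\CC^* \cdot v$ is closed in $V$ if and only if either $v$ is a single weight vector (including the fixed case) or there exist both a strictly positive weight $k_+ > 0$ and a strictly negative weight $k_- < 0$ with $v_{k_+} \neq 0 \neq v_{k_-}$. Equivalently, the orbit fails to be closed precisely when all nonzero weights present have the same sign (strictly positive, or strictly negative), in which case $\lim_{\lambda \to 0}$ or $\lim_{\lambda \to \infty}$ of $\lambda \cdot v$ exists and equals $v_0$ (the weight-zero part), lying outside the orbit when $v \neq v_0$. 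I would then translate the weight conditions into vanishing orders: since $\phi''$ vanishes at $x_0 = 0$ (i.e.\ at the point $[0:1] \in \PP^1$) to order equal to the smallest $j$ with $a_j \neq 0$, and at $x_1 = 0$ (the point $[1:0]$) to order $N$ minus the largest such $j$, the statement ``$\phi''$ vanishes at $0$ and $\infty$ with multiplicity $<N/2$'' says exactly that there is some $a_j \neq 0$ with $j < N/2$ and some $a_{j'} \neq 0$ with $j' > N/2$, i.e.\ weights $N - 2j > 0$ and $N - 2j' < 0$ both occur. That is precisely the closed-orbit criterion above, which establishes one direction.

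For the converse, I would argue that if $\phi''$ does \emph{not} vanish at both $0$ and $\infty$ to order $<N/2$, then --- using $a_{N/2 \pm 1} = 0$, so that the weights $\pm 2$ are automatically absent --- all nonzero weights of $\phi''$ have the same sign (or $\phi''$ has pure weight, which forces $\phi'' = \phi$ by the constraint, the excluded case), hence the $\CC^*$-orbit limits to the weight-zero component $a_{N/2} x_0^{N/2}x_1^{N/2} = a_{N/2}\phi$, which is not in the orbit of $\phi''$ unless $\phi'' = a_{N/2}\phi$; and if $a_{N/2} = \lambda_0$ with $\phi'' = \lambda_0 \phi$ then after rescaling we are at $\phi$ itself, again excluded by hypothesis $\phi'' \neq \phi$ --- note though that for $\phi''=\lambda_0\phi$ with $\lambda_0\neq 0,1$ the orbit \emph{is} closed (it equals $\CC^*\phi$), consistent with such $\phi''$ vanishing at $0,\infty$ to order exactly $N/2$, not $<N/2$, so this edge case is correctly excluded on both sides. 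I would organize the final write-up so this bookkeeping of the degenerate strata is handled cleanly.

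The main obstacle I anticipate is purely notational/conventional: pinning down the exact sign and normalization of the weight of the $\CC^*$-action~\eqref{eq:S1action} on $H^0(L)$ induced by the chosen lift of the $\SL(2,\CC)$-action to $L = \cO_{\PP^1}(N)$, and checking consistency with $\rho(s_1) = \tfrac{N}{2}x_0^{N/2-1}x_1^{N/2+1}$, $\rho(s_2) = \tfrac{N}{2}x_0^{N/2+1}x_1^{N/2-1}$ as displayed --- these have weights $N - 2(N/2-1) = 2$ and $N - 2(N/2+1) = -2$, which is exactly why the hyperplane $\langle \rho(s_1), \rho(s_2)\rangle^\perp$ is characterized by $a_{N/2 \pm 1} = 0$ and why those two ``small nonzero weights'' are removed; getting these signs straight is routine but must be done carefully. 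Everything else is the standard Hilbert--Mumford/closedness dictionary for a torus acting linearly.
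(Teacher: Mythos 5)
Your proof is correct in substance and is essentially the paper's own argument: decompose $\phi''$ into weight vectors for the one-parameter subgroup \eqref{eq:S1action} and observe that the orbit is closed exactly when $\lambda\cdot\phi''$ escapes to infinity both as $\lambda\to0$ and as $\lambda\to\infty$, i.e.\ when weights of both signs occur, which is exactly the condition that $\phi''$ vanish at $0$ and $\infty$ to order $<N/2$; the paper's proof is a four-line version of the same computation. Two blemishes are worth flagging. First, your opening statement of the closed-orbit criterion is false as written: a weight vector of nonzero weight has orbit $\CC^*v$, whose closure contains $0$, so it is \emph{not} closed. The ``equivalent'' reformulation you give immediately afterwards (non-closed iff some nonzero weight occurs and all occurring nonzero weights share a sign) is the correct one, and since it is the only version you actually invoke in either direction, nothing breaks. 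Second, your discussion of $\phi''=\lambda_0\phi$ is internally inconsistent: $\phi$ has weight zero, so such a $\phi''$ is a fixed point (its orbit is the single point $\{\lambda_0\phi\}$, not $\CC^*\phi$), hence its orbit is closed while it vanishes at $0$ and $\infty$ to order exactly $N/2$. These $\phi''$ are therefore genuine, if degenerate, exceptions to the biconditional as literally stated --- they are not ``correctly excluded on both sides''. This is a defect of the lemma's formulation that the paper's own proof also overlooks, and it is harmless for Theorem \ref{th:triumph} (where $\lambda_0\phi$ is polystable, has closed $\CC^*$-orbit, and admits a solution), but in your write-up you should either restrict to $\phi''$ not proportional to $\phi$ or record the exception explicitly rather than assert that the case is consistent.
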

\begin{proof}
Given $\phi'' \neq \phi$, as $\lambda \in \CC^*$ tends to $0$, the first $N/2-1$ monomials in the expression for $\phi''$ goes to infinity. Thus $\lambda \cdot \phi''$ tends to $\infty$ and the orbit is closed about $\lambda \to 0$ unless $a_j = 0$ for $j \leq N/2 -2$. That is, it is closed about $0$ so long as $\phi''$ does not vanish to order $\geq N/2$ at $x_0 = 0$. The statement follows arguing now for $\lambda \to \infty$.
\end{proof}

We are ready to prove the main theorem of this section.

\begin{proof}[Proof of Theorem \ref{th:triumph}]
We first prove $(2)$. By definition of polystability, the $\operatorname{SL}(2,\CC)$-orbit of $\phi''$ on $H^0(L)$ is closed and, in particular, the $\CC^*$-orbit of $\phi''$ in $\langle \rho(s_1),\rho(s_2) \rangle^\perp$ is closed. Then, by Theorem \ref{th:GFT}, the small deformation $(X,E')$ of $(X,E)$ determined by $\phi'$ admits a solution of the K\"ahler--Yang--Mils equations. We can now average to produce a $\widetilde{SU}(2)$-invariant solution, which implies the existence of a gravitating vortex by Proposition \ref{prop:dimred}.

As for the proof of $(1)$, it suffices to show that if $\phi''$ satisfies the hypothesis of Lemma \ref{lem:GIT} then it is polystable. Let $\phi' \in H^0(L)$ close to $\phi$. Since $\phi$ has only zeros at $0$ and $\infty$ with multiplicity $N/2$, $\phi'$ is forced to be semistable. We will prove that if $\phi'$ is strictly semi-stable and vanishes at $0$ and $\infty$ with multiplicity $< N/2$, then $\phi' \notin \langle \rho(s_1),\rho(s_2) \rangle^\perp$. We do the proof for the case that $\phi'$ vanishes at $3$ points $p_0$, $p_1$, $p_2$, with multiplicities $N/2$, $k$, $l$, and leave the general case for the reader. We assume $p_0$ close to $0$ and $p_1,p_2$ close to $\infty$. By assumption, $p_0 \neq 0$ and $p_1 \neq \infty \neq p_2$, so that but we can assume
$$
\phi' = (z - z_0)^{N/2}(\epsilon z - z_1)^k (\epsilon z - z_2)^l
$$
in coordinate $z = x_0/x_1$, with $z_0,z_1,z_2 \neq 0$ and $0 < \epsilon \ll 1$. A direct calculation then shows that
$$
\phi' = \sum_{j= 0}^{N} z^j \sum_{\tiny{\begin{array}{l}
j_0 + j_1 + j_2 = j \\ 0 \leq j_0 \leq N/2 \\ 0 \leq j_1 \leq k \\ 0 \leq j_2 \leq l
\end{array}}}^{N} (-1)^{N/2 - j_1 - j_2} \epsilon^{j_1+j_2} \binom{N/2}{j_0}\binom{k}{j_1}\binom{l}{j_2}z_0^{N/2 - j_0}z_1^{k-j_1}z_2^{l-j_2}
$$
and writting $\phi' = \sum_{j=0}^N a_j z^j$ we have
$$
a_{N/2 -1} = - \frac{N}{2}z_0 z_1^k z_2^l + O(\epsilon). 
$$
Therefore $a_{N/2 -1} \neq 0$ and we conclude that $\phi' \notin \langle \rho(s_1),\rho(s_2) \rangle^\perp$, as claimed.
\end{proof}

The proof of Theorem \ref{th:triumph} $(1)$ follows ultimately by convexity properties of the coefficient $a_{N/2 -1}$ along a $\operatorname{SL}(2,\CC)$-orbit in $H^0(L)$. For this, we note that for $\phi'$ strictly semistable there exists a $1$-parameter subgroup $\lambda \colon \CC^* \to  \operatorname{SL}(2,\CC)$ such that
$$
\lim_{\lambda \to 0} \lambda \cdot \phi' = \phi.
$$
A more conceptual proof can be derived, indeed, from Sz\'ekelyhidi's
arguments in \cite{Szekelyhidi}. Considering the finite-dimensional
moment map $\mu$ for the $\SU(2)$-action on $\PP(H^0(L))$, identified
with $S^N(\PP^1)$, we have that $\mu(\phi) = 0$ (see
\cite{ThomasGIT}). Then, applying Sz\'ekelyhidi's argument to $\mu$,
we obtain that any $\phi'' \in \langle \rho(s_1),\rho(s_2) \rangle^\perp$
with closed $\CC^*$-orbit admits also a zero of $\mu$ on its
$\operatorname{SL}(2,\CC)$-orbit. Therefore, by the Kempf--Ness Theorem any
such $\phi''$ is polystable.

\end{document}